\newtheorem{theorem}{Theorem}
\newtheorem{proposition}[theorem]{Proposition}
\newtheorem{lemma}[theorem]{Lemma}
\newtheorem{corollary}[theorem]{Corollary}
\theoremstyle{remark}
\newtheorem{remark}{Remark}
\theoremstyle{definition}
\newtheorem{definition}{Definition}
\newcommand\EatDot[1]{}
\newcommand{\bX}{X}
\newcommand{\bY}{Y}
\newcommand{\bZ}{Z}
\newcommand{\be}{e}
\newcommand{\bsf}{f}
\newcommand{\bu}{u}
\newcommand{\bv}{v}
\newcommand{\bx}{x}
\newcommand{\by}{y}
\newcommand{\bz}{z}
\newcommand{\bepsilon}{\epsilon}
\newcommand{\bvarepsilon}{\varepsilon}
\newcommand{\bzeta}{\zeta}
\newcommand{\btheta}{\theta}
\newcommand{\bxi}{\xi}
\newcommand{\bfA}{A}
\newcommand{\bfB}{B}
\newcommand{\bfC}{C}
\newcommand{\bfI}{I}
\newcommand{\bfJ}{J}
\newcommand{\bfM}{M}
\newcommand{\bfO}{O}
\newcommand{\bfP}{P}
\newcommand{\bfZ}{Z}
\newcommand{\bSigma}{\Sigma}
\newcommand{\bbS}{\mathbb{S}}
\newcommand{\calL}{\mathcal{L}}
\newcommand{\N}{\mathbb{N}}
\newcommand{\R}{\mathbb{R}}
\newcommand{\Z}{\mathbb{Z}}
\newcommand{\E}{\mathbb{E}}
\renewcommand{\Pr}{\mathbb{P}}
\newcommand{\Cov}{\textnormal{Cov}}
\newcommand{\Gauss}{\mathcal{N}}
\newcommand{\tr}{\textnormal{Tr}}
\newcommand{\diag}{\textnormal{diag}}
\newcommand{\diff}{\textnormal{d}}
\renewcommand{\epsilon}{\varepsilon}
\newcommand{\zero}{\mathbf{0}}
\DeclareMathOperator*{\argmin}{arg\,min}
\newcommand{\Ent}{\textnormal{Ent}}
\newcommand{\er}{\mathbf{r}}
\newtheorem{assumption}{Assumption}
\theoremstyle{definition}
\newtheorem{example}{Example}
\newcommand{\mF}{\mathcal{F}}
\newcommand{\mS}{\mathcal{S}}
\newcommand{\Ep}{\mathbb{E}}
\renewcommand{\Pr}{\mathbb{P}}
\newcommand{\mE}{\mathcal{E}}
\renewcommand{\hat}{\widehat}
\renewcommand{\tilde}{\widetilde}
\newcommand{\Surv}{S}
\DeclarePairedDelimiter{\abs}{\lvert}{\rvert}
\DeclarePairedDelimiter{\bigabs}{\Bigg\lvert}{\Bigg\rvert}
\newcommand{\rc}{\color{black}}
\title{Corrigendum to ``Dimension-free Bounds for Sums of Dependent Matrices and Operators with Heavy-Tailed Distributions''}
\author{Shogo Nakakita\textsuperscript{1}, Pierre Alquier\textsuperscript{2}, and Masaaki Imaizumi\textsuperscript{1,3}\vspace{2ex}\\
{\it \textsuperscript{1}The University of Tokyo, \textsuperscript{2}ESSEC Business School,}\\
{\it \textsuperscript{3}RIKEN Center for Advanced Intelligence Project}}
\begin{document}


\date{}
\maketitle
\begin{bibunit}[apalike]

\begin{abstract}
We correct Theorem 4 of \citet{nakakita2024dimension} by introducing a log-Sobolev inequality in place of the the boundedness condition.
We show that the examples discussed in \citet{nakakita2024dimension} can be recovered under new conditions.
The original, uncorrected version---which includes the aforementioned error---is appended after this corrigendum for transparency and comparison.
\end{abstract}

\section{Introduction}
\citet{nakakita2024dimension} considered dimension-free concentration bounds on random matrices, such as sample covariance matrices, under dependence.
The proof of Theorem 4, however, contains an error.
In the proof,
we defined $u,v\in\Sigma^{1/2}\mathbb{S}^{p-1}$ on page 1148.
It holds
that, with probability $1-e^{-t}$, for any \underline{$u,v\in\Sigma^{1/2}\bbS^{p-1}$} simultaneously,
\begin{align*}
        u^{\top}\left(\frac{1}{n}\sum_{i=1}^{n}M_{i}-\Sigma\right)v\le 4\sqrt{2} \|\Sigma\|(\kappa^{2}+\Gamma_{n})\sqrt{\frac{4\er(\Sigma) + t}{n}}.
    \end{align*}
Using this result, \citet{nakakita2024dimension} incorrectly concluded that with probability $1-e^{-t}$,
    \begin{align*}
        \left\|\frac{1}{n}\sum_{i=1}^{n}M_{i}-\Sigma\right\|\le 4\sqrt{2} \|\Sigma\|(\kappa^{2}+\Gamma_{n})\sqrt{\frac{4\er(\Sigma) + t}{n}}.
    \end{align*}
    Since $u,v$ are not elements of $\bbS^{p-1}$ but rather of $\bSigma^{1/2}\bbS^{p-1}$, the dimension-free bound which we truly obtain is that with probability $1-e^{-t}$,
    \begin{align*}
        \left\|\Sigma^{1/2}\left(\frac{1}{n}\sum_{i=1}^{n}M_{i}-\Sigma\right)\Sigma^{1/2}\right\|\le 4\sqrt{2} \|\Sigma\|(\kappa^{2}+\Gamma_{n})\sqrt{\frac{4\er(\Sigma) + t}{n}}.
    \end{align*}
Hence, the applicability of the derived bound becomes limited; for instance, benign overfitting discussed in \citet{nakakita2024dimension} may not hold true.

In this corrigendum, we correct Theorem 4 of \cite{nakakita2024dimension} by showing the concentration of sample covariance matrices under a log-Sobolev inequality instead of boundedness and mixing (Theorem \ref{thm:concentration}); we also present a sufficient condition for the concentration of general random matrices, inspired by Theorem \ref{thm:concentration} and its proof, as a remark (Remark \ref{remark:general}).
By this correction, we can substantially recover arguments of \citet{nakakita2024dimension} as they are, except for the geometric structures of observation noises and driving noises.
Notably, we can show the concentration of sample covariance matrices of causal Bernoulli shift models by essentially replacing a condition of boundedness for noise with a log-Sobolev inequality condition (Figure \ref{fig:diagram}).

\begin{figure}
    \centering
    \begin{tikzpicture}
        \draw[rounded corners] (0, 0) rectangle (7, 2);
        \draw[rounded corners] (8, 0) rectangle (15, 2);

        \node[fill=white] at (2,2) {\citet{nakakita2024dimension}};
        \node[fill=white] at (9.7,2) {This corrigendum};

        \node[text width=6cm] at (3.25,0.9) {
        $\bX_{i}=f(\bzeta_{i},\bzeta_{i-1},\bzeta_{i-2},\ldots)$,\\
        where $f$ is Lipschitz, $\bzeta_{i}$ are i.i.d., and \\
        $\bzeta_{i}$ satisfy \underline{boundedness}};
        
        \node[text width=6cm] at (11.25,0.9) {
        $\bX_{i}=f(\bzeta_{i},\bzeta_{i-1},\bzeta_{i-2},\ldots)$,\\
        where $f$ is Lipschitz, $\bzeta_{i}$ are i.i.d., and \\
        $\bzeta_{i}$ satisfy \underline{a log-Sobolev inequality}};

        \draw[->, ultra thick, dotted] (7.25, 1) -- (7.75, 1);
    \end{tikzpicture}
    \caption{A schematic diagram of the correction. The main example of random matrices in \citet{nakakita2024dimension} is the sample covariance matrix of Lipschitz functions with bounded independent inputs.
    We correct the boundedness assumption by replacing it with log-Sobolev inequalities.}
    \label{fig:diagram}
\end{figure}

\subsection{Notation}
We introduce the notation used in this corrigendum, which is also used in \citet{nakakita2024dimension}.
We set the Euclidean sphere $\bbS^{d-1}:=\{\bx\in\R^{d};\sum_{i=1}^{d}\bx_{i}^{2}=1\}$.
For any matrix $\bfM\in\R^{d_{1}\times d_{2}}$, we let $\|\bfM\|=\sup_{\bu\in\bbS^{d_{1}-1},\bv\in\bbS^{d_{2}-1}}\bu^{\top}\bfM\bv$, which is the operator norm of $\bfM$ as a map from $\R^{d_{2}}$ to $\R^{d_{1}}$, with both spaces equipped with the $\ell^{2}$-norm.
For any positive semi-definite matrix $\bfM$, we define the effective rank 
\begin{equation*}
    \er(\bfM)=\begin{cases}
        \tr(\bfM)/\|\bfM\| & \text{ if }\|\bfM\|>0,\\
        0 & \text{ otherwise.}
    \end{cases}
\end{equation*}
For $p\times p$ positive semi-definite $\bfA$, let $\bfA^{\dagger}$ denote the Moore--Penrose inverse of $\bfA$.

\section{Correction to the main theorem under a log-Sobolev inequality}

We present a corrected result for the concentration of the empirical mean of dependent and heavy-tailed random matrices.

As preparation, we present a key concept that is important for the correction.
A probability distribution $P$ on $\R^{d}$ is said to satisfy a \textit{log-Sobolev inequality} with a constant $C>0$ if for any locally Lipschitz $f:\R^{d}\to\R$, it holds that
\begin{equation*}
    \Ent_{P}(f^{2})\le 2C\int \left\|\nabla f\right\|^{2}\diff P,
\end{equation*}
where $\Ent_{P}(g):=\int (g\log g)\diff P-\int g\diff P\log(\int g\diff P)$ for any non-negative $g$, and $\nabla$ is the gradient operator on $\R^{d}$.

We present the corrected statement as follows.

\begin{theorem}[dimension-free concentration of sample covariance matrices]\label{thm:concentration}
    Let $(\bX_{i})_{i=1}^{n}$ be a sequence of $p$-dimensional random vectors such that $\E[\bX_{i}\bX_{i}^{\top}]=\bSigma$ for all $i=1,\ldots,n$, and suppose that the distribution of the $\R^{pn}$-valued random vector $((\bSigma^{\dagger})^{1/2}\bX_{1},\ldots,(\bSigma^{\dagger})^{1/2}\bX_{n})$ satisfies a log-Sobolev inequality for constant $K$.
    Then, for any $t\ge0$ and positive integer $n$ satisfying $n\ge 2K(9\er(\bSigma)+4t)$, with probability at least $1-\exp(-t)$,
    \begin{equation*}
        \left\|\frac{1}{n}\sum_{i=1}^{n}\bX_{i}\bX_{i}^{\top}-\bSigma\right\|\le  12\sqrt{K}\|\bSigma\|\sqrt{\frac{9\er(\bSigma)+4t}{n}}.
    \end{equation*}
\end{theorem}

A key modification in this theorem is the assumption that the $pn$-dimensional random vector $((\bSigma^{\dagger})^{1/2}\bX_{1},\ldots,(\bSigma^{\dagger})^{1/2}\bX_{n})$ satisfies a log-Sobolev inequality.
While \citet{nakakita2024dimension} separately deal with the tail behaviour and mixing property of random vectors, the log-Sobolev constant simultaneously captures both tail behavior and dependence.
Hence, we can regard a log-Sobolev inequality as a unification of those assumptions in \citet{nakakita2024dimension}.

\begin{remark}
    The setting of \citet{zhivotovskiy2024dimension} corresponds to the case where (i) the independence of $\{\bX_{i}\}$ and (ii) 
    the subgaussianity of $\bv^{\top}(\bSigma^{\dagger})^{1/2}\bX_{i}$
    for all $\bv\in\bbS^{p-1}$ (i.e., arbitrary linear maps from $\R^{p}$ to $\R$) hold. 
    Alternatively, we allow the dependence of $\{\bX_{i}\}$ but assume log-Sobolev inequalities inducing the subgaussianity of $f((\bSigma^{\dagger})^{1/2}\bX_{1},\ldots,(\bSigma^{\dagger})^{1/2}\bX_{n})$ for arbitrary $1$-Lipschitz maps $f:\R^{np}\to\R$, which is stronger than (ii) above.
\end{remark}

In Section \ref{corr:sec:example}, we see examples of $\bX_{i}$ and observe that $K$ reflects the magnitude of the dependence among $\{\bX_{i}\}$.

\subsection{Proof of Theorem \ref{thm:concentration}}
We prepare an exponential integrability result inspired by the celebrated Herbst's argument \citep{ledoux1999concentration,bakry2014analysis}.
\begin{lemma}[exponential integrability of quadratic forms]\label{lemma:expmoment}
    Suppose that $\bfA$ is a $d\times d$ positive semi-definite matrix and $\bZ$ is a $d$-dimensional random vector whose distribution $\nu$ satisfies a log-Sobolev inequality with constant $K$.
    Then, for any $\tau\in(-\infty,1/(2\|\bfA\|K))$,
    \begin{equation*}
        \E[\exp(\tau\bZ^{\top}\bfA\bZ)]\le \exp\left(\tau\E[\bZ^{\top}\bfA\bZ]+\frac{2\|\bfA\|K\tau^{2}}{1-2\|\bfA\|K\tau}\E[\bZ^{\top}\bfA\bZ]\right).
    \end{equation*}
\end{lemma}

\begin{proof}
We divide the proof into the cases $\tau > 0$ and $\tau < 0$ (the statement with $\tau=0$ is obvious).

(Step 1: $\tau>0$)
Define $f(\bz)=\bz^{\top}\bfA\bz$ and $M_{+}(s)=\int e^{sf}\diff\nu$ for $s\ge0$ (the moment-generating function of $f(\bZ)$).
We have
\begin{equation*}
    \Ent( e^{sf})=\E[ e^{sf}sf]-\E[e^{sf}]\log\E[e^{sf}]=sM_{+}'(s)-M_{+}(s)\log M_{+}(s),
\end{equation*}
and the positive semi-definiteness of $\bfA$ yields
\begin{align*}
    \E\left[\|\nabla e^{sf/2}\|^{2}\right]&=\frac{s^{2}}{4}\int e^{sf(\bz)}\|\nabla f(\bz)\|^{2}\nu(\diff\bz)= s^{2}\int e^{sf(\bz)}\|\bfA\bz\|^{2}\nu(\diff\bz)\\
    &\le \|\bfA\|s^{2}\int e^{sf(\bz)}f(\bz)\nu(\diff\bz)= \|\bfA\|s^{2}M_{+}'(s).
\end{align*}
Since a log-Sobolev inequality holds true, we have
\begin{equation*}
    sM_{+}'(s)\le M_{+}(s)\log M_{+} (s)  +2\|\bfA\|Ks^{2}M_{+}'(s)
\end{equation*}
and thus 
\begin{equation*}
    s(1-2\|\bfA\|Ks)M_{+}'(s)\le M_{+}(s)\log M_{+}(s).
\end{equation*}
Let us define $F_{+}(s)=(1/s)\log M_{+}(s)$ (the cumulant-generating function of $f(\bZ)$ scaled by $1/s$; let $F_{+}(0):=\E[f(\bZ)]$, and then $F_{+}(s)$ is continuous since $\lim_{s\downarrow0}F_{+}(s)=\lim_{s\downarrow0}(1/s)(\log M_{+}(s)-\log M_{+}(0))=M_{+}'(0)/M_{+}(0)$) and $b:=2\|\bfA\|K$.
Then
\begin{align*}
    F_{+}'(s)&=-\frac{\log M_{+}(s)}{s^{2}}+\frac{1}{s}\frac{M_{+}'(s)}{M_{+}(s)}\\
    &\le -\frac{\log M_{+}(s)}{s^{2}}+\frac{\log M_{+}(s)}{s^{2}(1-bs)}=\left(\frac{1}{s(1-bs)}-\frac{1}{s}\right)F_{+}(s)=\left(\frac{b}{1-bs}\right)F_{+}(s).
\end{align*}
Then Gr\"{o}nwall's inequality yields
\begin{align*}
    F_{+}(s)&\le F_{+}(0)\exp\left(\int_{0}^{s}\frac{b}{1-bu}\diff u\right)=\E[f(\bZ)]\exp\left(\int_{0}^{bs}\frac{\diff v}{1-v}\right)=\E[f(\bZ)]\exp\left(\int_{1-bs}^{1}\frac{\diff v'}{v'}\right)\\
    &=\frac{\E[f(\bZ)]}{1-bs}.
\end{align*}
Therefore, we have
\begin{equation*}
    (1/s)\log M_{+}(s)\le \E[f(\bZ)]\frac{1}{1-bs},
\end{equation*}
and scaling with $s$ yields
\begin{equation*}
    \log M_{+}(s)\le s\E[f(\bZ)]\frac{1}{1-bs},
\end{equation*}
and the monotonicity of $\exp(\cdot)$ along with $1/(1-bs)=1+bs/(1-bs)$ leads to
\begin{equation*}
    M_{+}(s)\le \exp\left(s\E[f(\bZ)]\frac{1}{1-bs}\right)=\exp\left(s\E[f(\bZ)]-\E[f(\bZ)]\frac{bs^{2}}{1-bs}\right).
\end{equation*}
Hence, we have
\begin{equation*}
    \E[\exp(sf(\bZ))]\le \exp\left(s\E[f(\bZ)]+\frac{bs^{2}}{1-bs}\E[f(\bZ)]\right).
\end{equation*}

(Step 2: $\tau<0$) Let us define $g(\bz)=-\bz^{\top}\bfA\bz$ and $M_{-}(s)=\int e^{sg}\diff\nu(=\int e^{-sf}\diff\nu)$ for $s\ge0$.
Then, the positive semi-definiteness of $\bfA$ yields
\begin{align*}
    \E\left[\|\nabla e^{sg/2}\|^{2}\right]&=\frac{s^{2}}{4}\int e^{sg(\bz)}\|\nabla g(\bz)\|^{2}\nu(\diff\bz)= s^{2}\int e^{sg(\bz)}\|\bfA\bz\|^{2}\nu(\diff\bz)\\
    &\le \|\bfA\|s^{2}\int e^{sg(\bz)}|g(\bz)|\nu(\diff\bz)=\|\bfA\|s^{2}\int e^{sg(\bz)}(-g(\bz))\nu(\diff\bz)\\
    &=-\|\bfA\|s^{2}\int e^{sg(\bz)}g(\bz)\nu(\diff\bz)= -\|\bfA\|s^{2}M_{-}'(s).
\end{align*}
Therefore, a log-Sobolev inequality with constant $K$ leads to
\begin{equation*}
    sM_{-}'(s)\le M_{-}(s)\log M_{-}(s) -2\|\bfA\|Ks^{2}M_{-}'(s).
\end{equation*}
Define $F_{-}(s)=(1/s)\log M_{-}(s)$ (with $F_{-}(0)=\E[g(\bZ)]$) and $b:=2\|\bfA\|K$.
Then,
\begin{align*}
    F_{-}'(s)&=-\frac{1}{s^{2}}\log M_{-}(s)+\frac{M_{-}'(s)}{sM_{-}(s)}\\
    &\le-\frac{1}{s^{2}}\log M_{-}(s)+\frac{1}{s^{2}(1+bs)}\log M_{-}(s)=F_{-}(s)\left(\frac{1}{s(1+bs)}-\frac{1}{s}\right)=-F_{-}(s)\left(\frac{b}{1+bs}\right).
\end{align*}
Gr\"{o}nwall's inequality yields
\begin{align*}
    F_{-}(s)&\le F_{-}(0)\exp\left(-\int_{0}^{s}\frac{b}{1+bt}\diff t\right)=F_{-}(0)\exp\left(-\int_{1}^{1+bs}\frac{1}{u}\diff u\right)=F_{-}(0)\exp(-\log(1+bs))\\
    &=F_{-}(0)\frac{1}{1+bs}=\E[g(\bZ)]\frac{1}{1+bs},
\end{align*}
where the last identity is by $F_{-}(0)=\E[g]$.
Hence, we have
\begin{equation*}
    (1/s)\log M_{-}(s)\le \E[g(\bZ)]\frac{1}{1+bs},
\end{equation*}
and multiplying both sides by $s$ yields
\begin{equation*}
    \log M_{-}(s)\le s\E[g(\bZ)]\frac{1}{1+bs}.
\end{equation*}
The monotonicity of $\exp(\cdot)$ yields
\begin{equation*}
    M_{-}(s)\le \exp\left(s\E[g(\bZ)]\frac{1}{1+bs}\right)=\exp\left(s\E[g(\bZ)]-\E[g(\bZ)]\frac{bs^{2}}{1+bs}\right).
\end{equation*}
Then, we obtain
\begin{equation*}
    \E\left[\exp\left(sg(\bZ)\right)\right]\le \exp\left(s\E[g(\bZ)]-\E[g(\bZ)]\frac{bs^{2}}{1+bs}\right).
\end{equation*}
As we set $\tau=-s$, we derive the statement.
\end{proof}

We also present a basic result in linear algebra.
\begin{lemma}\label{lem:norm}
    For a $d\times d$ symmetric matrix $\bfA$, set a $(dn)\times(dn)$ matrix $\bfA_{n}=\diag\{\bfA,\ldots,\bfA\}$. Then $\|\bfA_{n}\|= \|\bfA\|$.
\end{lemma}

\begin{proof}
    Since $\bfA_{n}=\bfI_{n}\otimes \bfA$ ($\otimes$ denotes the Kronecker product), the eigenvalues of $\bfA_{n}$ coincide with those of  $\bfA$ (we ignore multiplicity), and thus we obtain the conclusion.
\end{proof}

We exhibit the proof of Theorem \ref{thm:concentration}; the strategy is adapted from \citet{zhivotovskiy2024dimension}.
\begin{proof}[Proof of Theorem \ref{thm:concentration}]
    We assume $\bSigma$ is invertible; otherwise, we can give the proof by considering subspaces.

    (Step 1: prior and posterior)
    Let $\beta,r>0$ be positive constants to be fixed later. 
    Let $\mu$ denote a $p$-dimensional Gaussian measure with zero mean and covariance $\beta^{-1}\bSigma$.
    Set $\bu\in\bSigma^{1/2}\bbS^{p-1}$ and define $f_{\bu}$ as a probability density function with respect to the $p$-dimensional Lebesgue measure such that
    \begin{align*}
        f_{\bu}(\bx)=\frac{\exp\left(-\frac{\beta}{2}(\bx-\bu)^{\top}\bSigma^{-1}(\bx-\bu)\right)\mathbf{1}\{\|\bx-\bu\|\le r\}}{C_{\beta,r}\sqrt{2\pi}^{p}\sqrt{\det(\beta^{-1}\bSigma)}},
    \end{align*}
    where $C_{\beta,r}>0$ is the normalizing constant; this is a truncated Gaussian distribution whose truncation is given as $\mathbf{1}\{\|\bx-\bu\|\le r\}$.
    We have the following lower bound on $C_{\beta,r}$:
    \begin{equation*}
        C_{\beta,r}=1-\Pr\left(\|\bzeta_{\bu}-\bu\|>r\right)\ge 1-\E\left[\|\bzeta_{\bu}-\bu\|^{2}\right]r^{-2}=1-\frac{\tr(\bSigma)}{\beta r^{2}},
    \end{equation*}
    where $\bzeta_{u}\sim\Gauss(\bu,\beta^{-1}\bSigma)$.
    Let $\btheta_{\bu}$ be a random vectors with density $f_{\bu}$ for $\bu\in\bbS^{p-1}$.
    Since $f_{\bu}(\bx)$ is symmetric about $\bu$, it satisfies $\E[\btheta_{\bu}]=\bu$.
    Moreover, for any $p\times p$ matrix $\bfA$,
    \begin{align*}
        \E[\btheta_{\bu}^{\top}\bfA\btheta_{\bu}]&=\bu^{\top}\bfA\bu+C_{\beta,r}^{-1}\E[(\bzeta_{\bu}-\bu)^{\top}\bfA(\bzeta_{\bu}-\bu)\mathbf{1}\{\|\bzeta_{\bu}-\bu\|\le r\}]\\
        &= \bu^{\top}\bfA\bu+C_{\beta,r}^{-1}\E[\bzeta^{\top}\bfA\bzeta\mathbf{1}\{\|\bzeta\|\le r\}],
    \end{align*}
    where $\zeta\sim \Gauss(\zero,\beta^{-1}\bSigma)$.
    For convenience, let us define
    \begin{align*}
        \bfB_{\beta,r}:=C_{\beta,r}^{-1}\E[\bzeta\bzeta^{\top}\mathbf{1}\{\|\bzeta\|\le r\}].
    \end{align*}
    Here, $\bfB_{\beta,r}$ denotes the covariance of $\btheta_{\bu}$, which is independent of $\bu$.
    We have the following representation:
    \begin{align}
        \E[\btheta_{\bu}^{\top}\bfA\btheta_{\bu}]&= \bu^{\top}\bfA\bu+\tr(\bfA\bfB_{\beta,r})\label{eq:posteriormean:1}.
    \end{align}
    
    Let $\rho_{\bu}$ be a probability measure on $\R^{p}$ induced by $\btheta_{\bu}$; its density is given as $f_{\bu}(\bx)$.
    Let $g$ denote the density of $\mu$, which is set as $\Gauss(\zero,\beta^{-1}\bSigma)$ above.
    The Kullback--Leibler divergence between $\rho_{\bu}$ and $\mu$ is given by
    \begin{align*}
        \textnormal{KL}(\rho_{\bu}\|\mu)&=\int \log\left(\frac{f_{\bu}(\bx)}{g(\bx)}\right)f_{\bu}(\bx)\diff \bx\\
        &=\E_{\rho_{\bu}}\log\left(\frac{1}{C_{\beta,r}}\exp\left(\frac{-\beta(\btheta_{\bu}-\bu)^{\top}\bSigma^{-1}(\btheta_{\bu}-\bu)+\beta\btheta_{\bu}^{\top}\bSigma^{-1}\btheta_{\bu}}{2}\right)\right)\\
        &=\log\left(\frac{1}{C_{\beta,r}}\right)+\E_{\rho_{\bu}}\left[\frac{\beta \bu^{\top}\bSigma^{-1}\btheta_{\bu}+\beta \btheta_{\bu}^{\top}\bSigma^{-1}\bu-\beta\bu^{\top}\bSigma^{-1}\bu}{2}\right]\\
        &=\log\left(\frac{1}{C_{\beta,r}}\right)+\frac{\beta\bu^{\top}\bSigma^{-1}\bu}{2}=\log\left(\frac{1}{C_{\beta,r}}\right)+\frac{\beta}{2}.
    \end{align*}
    where the last line is by $\bu\in\bSigma^{1/2}\bbS^{p-1}$.
    We fix
    \begin{equation*}
        r=\sqrt{2\beta^{-1}\tr(\bSigma)};
    \end{equation*}
    then, we obtain $C_{\beta,r}\ge 1/2$, and thus
    \begin{equation}
        \textnormal{KL}(\rho_{\bu}\|\mu)\le \log2+\frac{\beta}{2}.\label{eq:KL}
    \end{equation}

    (Step 2: PAC-Bayes bound)
    We use the following PAC-Bayes bound (e.g., see \citealp{catoni2004statistical,catoni2017dimension}) with $\mu=\Gauss(\zero,\beta^{-1}\bSigma)$ and $\bX=(\bX_{1},\ldots,\bX_{n})$: for arbitrary $h:\R^{np}\times\R^{p}\to\R$ such that $h$ is bounded above, with probability at least $1-\exp(-t)$, for all probability measures $\rho\ll\mu$ on $\R^{p}$ simultaneously,
    \begin{equation}\label{eq:pacbayes}
        \E_{\btheta\sim \rho}[h(\bX,\btheta)]\le \E_{\btheta\sim \rho}[\log\E_{\bX}(\exp(h(\bX,\btheta)))]+\textnormal{KL}(\rho\|\mu)+t.
    \end{equation}
    
    We let $h_{b}(\bX,\btheta)=\min\{b,\lambda\sum_{i=1}^{n}\btheta^{\top}\bSigma^{-1/2}\bX_{i}\bX_{i}^{\top}\bSigma^{-1/2}\btheta\}$ for $b\in\N$; $b$ is a truncation parameter introduced to ensure that $h_b$ is bounded, and we take the limit $b\to\infty$ later.
    Using (i) the trivial bound $\exp(\min\{b,x\})\le \exp(x)$ for any $x\in\R$ and (ii) Lemma \ref{lemma:expmoment} with 
    \begin{equation*}
        \bfA=\diag\{\btheta\btheta^{\top},\ldots,\btheta\btheta^{\top}\},\ \bZ=(\bSigma^{-1/2}\bX_{1},\ldots,\bSigma^{-1/2}\bX_{n})
    \end{equation*}
    (note $\|\bfA\|\le \|\btheta\|^{2}$ by Lemma \ref{lem:norm} and $\E[\bZ^{\top}\bfA\bZ]\le \|\btheta\|^{2}n$), we derive that for any $b\in\N$ and $\btheta\in\R^{p}$, and $\lambda\in[0,1/(4\|\btheta\|^{2}K)]$,
    \begin{align}
        \E_{\bX}[\exp (h_{b}(\bX,\btheta))]&\le \E_{\bX}\left[\exp\left(\lambda\btheta^{\top} \left(\sum_{i=1}^{n}\bSigma^{-1/2}\bX_{i}\bX_{i}^{\top}\bSigma^{-1/2}\right)\btheta\right)\right]\notag\\
        &\le \exp\left(\lambda n\btheta^{\top}\bSigma^{-1/2}\bSigma\bSigma^{-1/2}\btheta+4K\lambda^{2}\|\btheta\|^{4}n\right).\label{eq:expmoment:1}
    \end{align}
    We have for all $\bu\in\bSigma^{1/2}\bbS^{p-1}$, for $\rho_{\bu}$-almost all $\btheta$,
    \begin{equation*}
        \|\btheta\|^{4}\le (\sqrt{\|\bSigma\|}+r)^{4}=(\sqrt{\|\bSigma\|}+\sqrt{2\beta^{-1}\tr(\bSigma)})^{4}.
    \end{equation*}
    As \citet{zhivotovskiy2024dimension}, we choose $\beta=2\er(\bSigma)$; then $\|\btheta\|^{4}\le 16\|\bSigma\|^{2}$.
    
    Hence, Eqs.~\eqref{eq:posteriormean:1}, \eqref{eq:KL}, \eqref{eq:pacbayes}, and \eqref{eq:expmoment:1} yield that for any $\lambda\in[0,1/(16\|\bSigma\|K)]$ with probability $1-e^{-t}$, for all $\bu\in\bSigma^{1/2}\bbS^{p-1}$ simultaneously,
    \begin{align*}
        \frac{1}{n}\E_{\rho_{\bu}}[h_{b}(\bX,\btheta_{\bu})]&\le \frac{\lambda}{n}\sum_{i=1}^{n}\bu^{\top}\bSigma^{-1/2}\left(\bSigma\right)\bSigma^{-1/2}\bu+\frac{\lambda}{n}\sum_{i=1}^{n}\tr(\bfB_{\beta,r}\bSigma^{-1/2}\left(\bSigma\right)\bSigma^{-1/2})\}\\
        &\quad+ 64K\lambda^{2}\|\bSigma\|^{2}+\frac{\log2+\er(\bSigma)+t}{n}.
    \end{align*}
    The right-hand side is independent of $b$, and the left-hand side is non-decreasing in $b$.
    Taking the limit $b\to\infty$ and employing the monotonicity of the events in $b\in\N$, we obtain that with probability $1-e^{-t}$, for all $\bu\in\bSigma^{1/2}\bbS^{p-1}$ simultaneously,
    \begin{align*}
        &\frac{\lambda}{n}\E_{\rho_{\bu}}\left[\sum_{i=1}^{n}\btheta_{\bu}^{\top}\bSigma^{-1/2}\bX_{i}\bX_{i}^{\top}\bSigma^{-1/2}\btheta_{\bu}\right]\\
        &\le \frac{\lambda}{n}\sum_{i=1}^{n}\bu^{\top}\bSigma^{-1/2}\left(\bSigma\right)\bSigma^{-1/2}\bu+\frac{\lambda}{n}\sum_{i=1}^{n}\tr(\bfB_{\beta,r}\bSigma^{-1/2}\left(\bSigma\right)\bSigma^{-1/2})\}
        + 64K\lambda^{2}\|\bSigma\|^{2}+\frac{\log2+\er(\bSigma)+t}{n}.
    \end{align*}
    Using the following identity derived by Eq.~\eqref{eq:posteriormean:1} such that
    \begin{equation}
        \sum_{i=1}^{n}\E_{\rho_{\bu}}\left[\btheta_{\bu}^{\top}\bSigma^{-1/2}\bX_{i}\bX_{i}^{\top}\bSigma^{-1/2}\btheta_{\bu}\right]
        = \sum_{i=1}^{n}\bu^{\top}\bSigma^{-1/2}\bX_{i}\bX_{i}^{\top}\bSigma^{-1/2}\bu+\sum_{i=1}^{n}\tr(\bfB_{\beta,r}\bSigma^{-1/2}\bX_{i}\bX_{i}^{\top}\bSigma^{-1/2}),\label{eq:posteriormean:2}
    \end{equation}
    we obtain that with probability $1-e^{-t}$, for all $\bSigma^{1/2}\bu\in\bbS^{p-1}$ simultaneously,
    \begin{align*}
        &\frac{\lambda}{n}\sum_{i=1}^{n}\bu^{\top}\bSigma^{-1/2}\left(\bX_{i}\bX_{i}^{\top}-\bSigma\right)\bSigma^{-1/2}\bu+\frac{\lambda}{n}\sum_{i=1}^{n}\tr(\bfB_{\beta,r}\bSigma^{-1/2}\left(\bX_{i}\bX_{i}^{\top}-\bSigma\right)\bSigma^{-1/2})\\
        &\le 64K\lambda^{2}\|\bSigma\|^{2}+\frac{\log2+\er(\bSigma)+t}{n}.
    \end{align*}
    
    In the same manner, using the following bound derived by Lemma \ref{lemma:expmoment} such that for any $\lambda\in[0,1/(4\|\btheta\|^{2}K)]$ (by letting $\tau=-\lambda$),
    \begin{align*}
        \E_{\bX}\left[\exp\left(\lambda\btheta^{\top} \left(-\sum_{i=1}^{n}\bSigma^{-1/2}\bX_{i}\bX_{i}^{\top}\bSigma^{-1/2}\right)\btheta\right)\right]\le \exp\left(-\lambda n\btheta^{\top}\btheta\right)\exp\left(4K\lambda^{2}\|\btheta\|^{2}n\right),
    \end{align*}
    and letting $h(\bX,\btheta)=-\btheta^{\top} (\sum_{i=1}^{n}\bSigma^{-1/2}\bX_{i}\bX_{i}^{\top}\bSigma^{-1/2})\btheta$ ($\sup h\le 0$),
    we have that for any $\lambda\in[0,1/(16\|\bSigma\|K)]$, with probability $1-e^{-t}$, for all $\bSigma^{1/2}\bu\in\bbS^{p-1}$ simultaneously,
    \begin{align*}
        &\frac{\lambda}{n}\sum_{i=1}^{n}\bu^{\top}\bSigma^{-1/2}\left(\bSigma-\bX_{i}\bX_{i}^{\top}\right)\bSigma^{-1/2}\bu+\frac{\lambda}{n}\sum_{i=1}^{n}\tr(\bfB_{\beta,r}\bSigma^{-1/2}\left(\bSigma-\bX_{i}\bX_{i}^{\top}\right)\bSigma^{-1/2})\\
        &\le 64K\lambda^{2}\|\bSigma\|^{2}+\frac{\log2+\er(\bSigma)+t}{n}.
    \end{align*}

    (Step 3: concentration of the residuals) It remains to bound the sum of the traces.
    Note that
    \begin{equation*}
        \bfB_{\beta,r}=C_{\beta,r}^{-1}\E[\bzeta\bzeta^{\top}\mathbf{1}\{\|\bzeta\|\le r\}],
    \end{equation*}
    and $\bzeta\sim\Gauss(\zero,\beta^{-1}\bSigma)=\Gauss(\zero,(2\er(\bSigma))^{-1}\bSigma)$.
    Since $r$ was chosen to ensure $C_{\beta,r} \ge 1/2$,
    we obtain
    \begin{align*}
        &\left\|C_{\beta,r}^{-1}\E[\bzeta\bzeta^{\top}\mathbf{1}\{\|\bzeta\|\le r\}]\right\|\le 2\sup_{\bu\in\bbS^{p-1}}\E[(\bzeta^{\top}\bu)^{2}\mathbf{1}\{\|\bzeta\|\le r\}]\le 2\sup_{\bu\in\bbS^{p-1}}\E[(\bzeta^{\top}\bu)^{2}]\le \frac{\|\bSigma\|}{\er(\bSigma)},\\
        &\tr(C_{\beta,r}^{-1}\E[\bzeta\bzeta^{\top}\mathbf{1}\{\|\bzeta\|\le r\}])\le 2\E[\|\bzeta\|^{2}\mathbf{1}\{\|\bzeta\|\le r\}]\le 2\E[\|\bzeta\|^{2}]=2\frac{\tr(\bSigma)}{2\er(\bSigma)}=\|\bSigma\|.
    \end{align*}
    We again use Lemma \ref{lemma:expmoment} with $\bfA=\diag\{\bfB_{\beta,r},\ldots,\bfB_{\beta,r}\}$ and $ \bZ=(\bSigma^{-1/2}\bX_{1},\ldots,\bSigma^{-1/2}\bX_{n})$ ($\|\bfA\|=C_{\beta,r}^{-1}\beta^{-1}\|\bSigma\|\le \|\bSigma\|/\er(\bSigma)\le \|\bSigma\|$ by Lemma \ref{lem:norm}, and $\E[\bZ^{\top}\bfA\bZ]\le n\tr(\bfB_{\beta,r})\le n\|\bSigma\|$) and derive that for all $\lambda\in[0,1/(4\|\bSigma\|K)]$,
    \begin{align*}
        \E_{\bX}\left[\exp\left(\lambda\sum_{i=1}^{n}\tr(\bfB_{\beta,r}\bSigma^{-1/2}\left(\bX_{i}\bX_{i}^{\top}-\bSigma\right)\bSigma^{-1/2})\right)\right]&\le \exp\left(4K\|\bSigma\|^{2}\lambda^{2}n\right),\\
        \E_{\bX}\left[\exp\left(-\lambda\sum_{i=1}^{n}\tr(\bfB_{\beta,r}\bSigma^{-1/2}\left(\bX_{i}\bX_{i}^{\top}-\bSigma\right)\bSigma^{-1/2})\right)\right]&\le \exp\left(4K\|\bSigma\|^{2}\lambda^{2}n\right).
    \end{align*}
    Applying Chernoff bounds, we obtain that with probability $1-e^{-t}$,
    \begin{equation*}
        \frac{\lambda}{n}\sum_{i=1}^{n}\tr(\bfB_{\beta,r}\bSigma^{-1/2}\left(\bSigma-\bX_{i}\bX_{i}^{\top}\right)\bSigma^{-1/2})\le 4K\|\bSigma\|^{2}\lambda^{2}+\frac{t}{n},
    \end{equation*}
    and with probability $1-e^{-t}$,
    \begin{equation*}
        \frac{\lambda}{n}\sum_{i=1}^{n}\tr(\bfB_{\beta,r}\bSigma^{-1/2}\left(\bX_{i}\bX_{i}^{\top}-\bSigma\right)\bSigma^{-1/2})\le 4K\|\bSigma\|^{2}\lambda^{2}+\frac{t}{n}.
    \end{equation*}

    (Step 4: summarization) Therefore, with probability $1-4e^{-t}$,
    \begin{align*}
        \frac{1}{n}\left|\sum_{i=1}^{n}\bu^{\top}\bSigma^{-1/2}\left(\bX_{i}\bX_{i}^{\top}-\bSigma\right)\bSigma^{-1/2}\bu
      \right|\le (68K\|\bSigma\|^{2})\lambda +\frac{\log2+\er(\bSigma)+2t}{\lambda n}.
    \end{align*}
    By the translation $t\to t+\log4$, we have that with probability $1-e^{-t}$,
    \begin{align*}
        \frac{1}{n}\left|\sum_{i=1}^{n}\bu^{\top}\bSigma^{-1/2}\left(\bX_{i}\bX_{i}^{\top}-\bSigma\right)\bSigma^{-1/2}\bu
      \right|\le (68K\|\bSigma\|^{2})\lambda +\frac{(9/2)\er(\bSigma)+2t}{\lambda n},
    \end{align*}
    where we used $\log2+\er(\bSigma)+2(t+2\log2)=5\log2+\er(\bSigma)+2t\le (7/2)+\er(\bSigma)+2t\le (9/2)\er(\bSigma)+2t$.
    We choose $\lambda=\sqrt{((9/2)\er(\bSigma)+2t)/((68K\|\bSigma\|^{2})n)}$; the following computation shows that it satisfies $\lambda\le 1/(16\|\bSigma\|K)$ by the assumption on $n\ge 2K(9\er(\bSigma)+4t)$:
    \begin{align*}
        &\sqrt{((9/2)\er(\bSigma)+2t)/((68K\|\bSigma\|^{2})n)}\le 1/(16\|\bSigma\|K)\\
        \iff &((9/2)\er(\bSigma)+2t)/((68K\|\bSigma\|^{2})n)\le 1/(256\|\bSigma\|^{2}K^{2})\\
        \iff &n\ge \frac{64}{17}\|\bSigma\|^{2}K^{2}\frac{(9/2)\er(\bSigma)+2t}{K\|\bSigma\|^{2}}=\frac{32}{17}\|\bSigma\|^{2}K^{2}\frac{9\er(\bSigma)+4t}{K\|\bSigma\|^{2}}\\
        \impliedby &n\ge 2K(9\er(\bSigma)+4t),
    \end{align*}
    where we used $(32/17)\le 2$.
    We derive that with probability $1-e^{-t}$,
    \begin{align*}
        \frac{1}{n}\left|\sum_{i=1}^{n}\bu^{\top}\bSigma^{-1/2}\left(\bX_{i}\bX_{i}^{\top}-\bSigma\right)\bSigma^{-1/2}\bu
       \right|
       &\le 2\sqrt{\frac{68K\|\bSigma\|^{2}((9/2)\er(\bSigma)+2t)}{n}}\\
       &\le 12\sqrt{K}\|\bSigma\|\sqrt{\frac{9\er(\bSigma)+4t}{n}},
    \end{align*}
    where we used $\sqrt{34}\le 6$.
    Since we have set $\bu\in\bSigma^{1/2}\bbS^{p-1}$, and it holds that $\|\bfM\|=\sup_{\bv\in\bbS^{d-1}}|\bv^{\top}\bfM\bv|$ for any symmetric matrix $\bfM$, we obtain the desired conclusion.
\end{proof}

\begin{remark}\label{remark:general}
    As \citet{nakakita2024dimension} consider the concentration of the sums of random matrices under dependence, it is helpful to identify a sufficient condition for general random matrices inspired by Theorem \ref{thm:concentration}.
    Suppose that $(\bfM_{i})_{i=1}^{n}$ is a sequence of $p\times p$ positive semi-definite random matrices with $\E[\bfM_{i}]=\bSigma$. 
    By seeing the proof of Theorem \ref{thm:concentration} in detail, we notice that the following conditions are sufficient to derive the same concentration bound on $\|\frac{1}{n}\sum_{i=1}^{n}\bfM_{i}-\bSigma\|$: for some $K>0$, for all $p\times p$ positive semi-definite matrices $\bfA$ and for all $\tau\in(-\infty,1/(2\|\bfA\|K))$,
    \begin{equation}
        \E\left[\exp\left(\tau\sum_{i=1}^{n}\tr\left(\bfA\left((\bSigma^{\dagger})^{1/2}\bfM_{i}(\bSigma^{\dagger})^{1/2}-\E[(\bSigma^{\dagger})^{1/2}\bfM_{i}(\bSigma^{\dagger})^{1/2}]\right)\right)\right)\right]\le \exp\left(\frac{2\|\bfA\|K\tau^{2}}{1-2\|\bfA\|K\tau}n\tr(\bfA)\right).\label{eq:general}
    \end{equation}
    If it holds, then for any $t\ge0$ and positive integer $n$ satisfying $n\ge 2K(9\er(\bSigma)+4t)$, with probability at least $1-\exp(-t)$,
    \begin{equation*}
        \left\|\frac{1}{n}\sum_{i=1}^{n}\bfM_{i}-\bSigma\right\|\le  12\sqrt{K}\|\bSigma\|\sqrt{\frac{9\er(\bSigma)+4t}{n}}.
    \end{equation*}
\end{remark}

\section{Examples of dependent processes with the regularity condition}\label{corr:sec:example}
We display some examples of dependent processes satisfying the regularity condition of Theorem \ref{thm:concentration} above.
For any $L$-Lipschitz $\bsf$ and random vector $\bX$ whose distribution satisfies a log-Sobolev inequality with constant $K$, the distribution of $\bsf(\bX)$ satisfies a log-Sobolev inequality with constant $KL^{2}$.
Using this property, we consider causal Bernoulli shift (CBS) processes (Section \ref{corr:sec:example:cbs}), vector autoregressive (VAR) processes (Section \ref{corr:sec:example:var}), and augmented processes (Section \ref{corr:sec:example:aug}) as examples of dependent processes satisfying the regularity condition.
Since a na\"{i}ve mapping $\R^{p}\ni \bv\mapsto(\bSigma^{\dagger})^{1/2}\bv\in\R^{p}$ is $\lambda_{\min}(\bSigma)^{-1/2}$-Lipschitz, we carefully construct these examples.
The analysis on CBS and VAR validates the argument of Sections 4.1 (covariance estimation of CBS) and 4.4 (benign overfitting) of \citet{nakakita2024dimension}, and that on augmented processes recovers the contents of Sections 4.2 (lagged covariance estimation).

Furthermore, we apply our result to estimation of linear hidden Markov models (Section \ref{corr:sec:example:hmm}).
This application corresponds to Section 4.3 of \citet{nakakita2024dimension}. We also correct an error in that section.

We prepare the following lemma on the operator norms of block matrices.
\begin{lemma}\label{lem:causalblock}
    Let $(\bfJ_{i})_{i=0}^{m}$ ($m\in\Z_{\ge0}$) be a sequence of $d_{1}\times d_{2}$ matrices.
    We fix $n$ and set the following $(nd_{1})\times ((n+m)d_{2})$ matrix $\bfM$:
    \begin{equation*}
        \bfM=\left[\begin{matrix}
            \bfJ_{0} & \bfJ_{1} & \bfJ_{2} & \bfJ_{3} & \cdots & \bfO & \bfO & \bfO & \bfO\\
            \bfO & \bfJ_{0} & \bfJ_{1} & \bfJ_{2} & \cdots & \bfO & \bfO & \bfO & \bfO\\
            \bfO & \bfO & \bfJ_{0} & \bfJ_{1} & \cdots & \bfO & \bfO & \bfO & \bfO\\
            \vdots & \vdots & \vdots & \vdots & & \vdots & \vdots & \vdots & \vdots \\
            \bfO & \bfO & \bfO & \bfO & \cdots &  \bfJ_{m-2} & \bfJ_{m-1} & \bfJ_{m} & \bfO \\
            \bfO & \bfO & \bfO & \bfO & \cdots & \bfJ_{m-3} & \bfJ_{m-2} & \bfJ_{m-1} & \bfJ_{m}
        \end{matrix}\right].
    \end{equation*}
    We have $\|\bfM\|\le \sum_{i=0}^{m}\|\bfJ_{i}\|$.
\end{lemma}

\begin{proof}
    We obtain that
    \begin{align*}
        \|\bfM\|
        &=\sup_{\substack{(\bu_{i})_{i=1}^{n}\subset\R^{d_{1}}:\sum_{i=1}^{n}\|\bu_{i}\|^{2}=1,\\(\bv_{j})_{j=1}^{n+m}\subset\R^{d_{2}}:\sum_{j=1}^{n+m}\|\bv_{j}\|^{2}=1}}
        \sum_{i=1}^{n}\sum_{j=i}^{i+m}\bu_{i}^{\top}\bfJ_{j-i}\bv_{j}\\
        &\le \sup_{\substack{(\bu_{i})_{i=1}^{n}\subset\R^{d_{1}}:\sum_{i=1}^{n}\|\bu_{i}\|^{2}=1,\\(\bv_{j})_{j=1}^{n+m}\subset\R^{d_{2}}:\sum_{j=1}^{n+m}\|\bv_{j}\|^{2}=1}}
        \sqrt{\sum_{i=1}^{n}\|\bu_{i}\|^{2}}\sqrt{\sum_{i=1}^{n}\left\|\sum_{j=i}^{i+m}\bfJ_{j-i}\bv_{j}\right\|^{2}}\ \because\text{(Cauchy--Schwarz)}\\
        &= \sup_{\substack{(\bv_{j})_{j=1}^{n+m}\subset\R^{d_{2}}:\\
        \sum_{j=1}^{n+m}\|\bv_{j}\|^{2}=1}}
        \sqrt{\sum_{i=1}^{n}\left\|\sum_{j=i}^{i+m}\frac{\sum_{k=i}^{i+m}\|\bfJ_{k-i}\|}{\sum_{k=i}^{i+m}\|\bfJ_{k-i}\|}\frac{\|\bfJ_{j-i}\|}{\|\bfJ_{j-i}\|}\bfJ_{j-i}\bv_{j}\right\|^{2}}\\
        &= \left(\sum_{k=0}^{m}\|\bfJ_{k}\|\right)\sup_{\substack{(\bv_{j})_{j=1}^{n+m}\subset\R^{d_{2}}:\\
        \sum_{j=1}^{n+m}\|\bv_{j}\|^{2}=1}}
        \sqrt{\sum_{i=1}^{n}\left\|\sum_{j=i}^{i+m}\frac{\|\bfJ_{j-i}\|}{\sum_{k=i}^{i+m}\|\bfJ_{k-i}\|}\frac{1}{\|\bfJ_{j-i}\|}\bfJ_{j-i}\bv_{j}\right\|^{2}}\\
        &\le \left(\sum_{k=0}^{m}\|\bfJ_{k}\|\right)\sup_{\substack{(\bv_{j})_{j=1}^{n+m}\subset\R^{d_{2}}:\\
        \sum_{j=1}^{n+m}\|\bv_{j}\|^{2}=1}}
        \sqrt{\sum_{i=1}^{n}\sum_{j=i}^{i+m}\frac{\|\bfJ_{j-i}\|}{\sum_{k=i}^{i+m}\|\bfJ_{k-i}\|}\left\|\frac{1}{\|\bfJ_{j-i}\|}\bfJ_{j-i}\bv_{j}\right\|^{2}}\ \because\text{(Jensen)}\\
        &=\sqrt{\sum_{k=0}^{m}\|\bfJ_{k}\|}\sup_{\substack{(\bv_{j})_{j=1}^{n+m}\subset\R^{d_{2}}:\\
        \sum_{j=1}^{n+m}\|\bv_{j}\|^{2}=1}}
        \sqrt{\sum_{i=1}^{n}\sum_{j=i}^{i+m}\frac{1}{\|\bfJ_{j-i}\|}\left\|\bfJ_{j-i}\bv_{j}\right\|^{2}}\\
        &=\sqrt{\sum_{k=0}^{m}\|\bfJ_{k}\|}\sup_{\substack{(\bv_{j})_{j=1}^{n+m}\subset\R^{d_{2}}:\\
        \sum_{j=1}^{n+m}\|\bv_{j}\|^{2}=1}}
        \sqrt{\sum_{i=1}^{n}\sum_{j=i}^{i+m}\left\|\bfJ_{j-i}\right\|\left\|\bv_{j}\right\|^{2}}\\
        &=\sqrt{\sum_{k=0}^{m}\|\bfJ_{k}\|}\sup_{\substack{(\bv_{j})_{j=1}^{n+m}\subset\R^{d_{2}}:\\
        \sum_{j=1}^{n+m}\|\bv_{j}\|^{2}=1}}
        \sqrt{\sum_{j=1}^{n+m}\sum_{i=1\vee (j-m)}^{j\wedge n}\left\|\bfJ_{j-i}\right\|\left\|\bv_{j}\right\|^{2}}\\
        &\le \left(\sum_{j=0}^{m}\|\bfJ_{j}\|\right)\sup_{\substack{(\bv_{j})_{j=1}^{n+m}\subset\R^{d_{2}}:\\
        \sum_{j=1}^{n+m}\|\bv_{j}\|^{2}=1}}
        \sqrt{\sum_{j=1}^{n+m}\left\|\bv_{j}\right\|^{2}}=\sum_{j=0}^{m}\|\bfJ_{j}\|.
    \end{align*}
    This concludes the proof.
\end{proof}

\subsection{Causal Bernoulli shifts}\label{corr:sec:example:cbs}
Let us fix a non-negative integer $m$ (a lag parameter), which determines the length of a sequence of driving noises; we can let it diverge later.
For a sequence of $d$-dimensional independent and identically distributed random vectors $(\bzeta_{i})_{i=1-m}^{n}$ such that the distribution of each $\bzeta_{i}$ satisfies a log-Sobolev inequality with constant $K_{\bzeta}$, and $\bsf:(\R^{d})^{m}\to\R^{p}$ such that $\E[\bsf((\bzeta_{1-i})_{i=1}^{m})\bsf((\bzeta_{1-i})_{i=0}^{m})^{\top}]=\bfI_{p}$ whose Jacobian $\bfJ_{\bsf}$ satisfies $\sum_{i=0}^{m}\left\|\nabla_{i}\bsf\right\|\le L$ for some $L>0$ (here, $\nabla_{i}\bsf$ means the Jacobian of $\bsf$ with respect to the $i$-th input vector of $\bsf$).
A trivial example is that $m=0$, $d=p$, $\bzeta\sim\Gauss(\zero,\bfI_{p})$, and $\bsf=\bfI_{d}$.
We set $\bfZ_{\ell}=\bsf((\bzeta_{\ell-i})_{i=1}^{m})$.
The distribution of $(\bfZ_{1},\ldots,\bfZ_{n})$ satisfies a log-Sobolev inequality with constant $K_{\bzeta}L^{2}$.
Hence, we can set $\bX_{i}=\bSigma^{1/2}\bZ_{i}$ for positive semi-definite $\bSigma$.

Taking the limit $m\to\infty$ can be justified as follows. We assume that the sequence of the $np$-dimensional random vectors $(\bZ_{i}^{m})_{i=1}^{n}:=(\bsf((\bzeta_{i-j})_{j=1}^{m}))_{i=1}^{n}$ converges to $(\bZ_{i}^{\infty})_{i=1}^{n}$ in law (here, $\bsf$ itself is also dependent on $m$) when $m\to\infty$, and $L=\sup_{m}\sum_{i=0}^{m}\|\nabla_{i}\bsf\|<\infty$.
Then, the continuous mapping theorem implies that 
\begin{equation*}
    \left\|\frac{1}{n}\sum_{i=1}^{n}\bSigma^{1/2}(\bZ_{i}^{m})(\bZ_{i}^{m})^{\top}\bSigma^{1/2}-\bSigma\right\|\overset{\calL}{\to}\left\|\frac{1}{n}\sum_{i=1}^{n}\bSigma^{1/2}(\bZ_{i}^{\infty})(\bZ_{i}^{\infty})^{\top}\bSigma^{1/2}-\bSigma\right\|.
\end{equation*}
Therefore, we complete the justification by the portmanteau lemma; in fact, the weak inequality $\le$ in Theorem \ref{thm:concentration} can be replaced with the strict one $<$ since our proof is based on Markov's inequality, and thus the portmanteau lemma for open sets yields the justification.

As \citet{nakakita2024dimension} consider independent additive noises to $\bX_{i}$, we can also treat additive noises, as it is straightforward to obtain estimates for log-Sobolev constants under log-Sobolev inequalities for the distribution of additive noises.
If $(\bZ_{i})_{i=1}^{n}$ and $(\bepsilon_{i})_{i=1}^{n}$ are $np$-dimensional random vectors independent of each other and whose distributions satisfy log-Sobolev inequalities with constant $C_{\bZ}$ and $C_{\bepsilon}$ respectively, then $(\bY_{i})_{i=1}^{n}:=(\bZ_{i}+\bepsilon_{i})_{i=1}^{n}$ satisfies a log-Sobolev constant with $2(C_{\bZ}\vee C_{\bepsilon})$ by tensorization \citep{bakry2014analysis} and the $\sqrt{2}$-Lipschitz continuity of summation.
Hence, if $\bepsilon_{i}$ is an i.i.d.~noise with isotropy $\E[\bepsilon_{i}\bepsilon_{i}^{\top}]=\bfI_{d}$, then $\bX_{i}=\bSigma^{1/2}\bY_{i}$ leads to the desired property.

\subsection{Vector autoregressive processes}\label{corr:sec:example:var}
Let us consider a stationary vector autoregressive process as follows:
\begin{equation*}
    \bX_{i}=\bfA\bX_{i-1}+\bfB\bzeta_{i},
\end{equation*}
where $(\bzeta_{i})_{i}$ is a sequence of independent and identically distributed centered isotropic random vectors, and both $\bfA$ and $\bfB$ are symmetric matrices whose spectral decompositions are given with respect to the same orthonormal basis $\{\be_{j}\}\subset\R^{p}$;
\begin{align*}
    \bfA=\sum_{j=1}^{p}a_{j}\be_{j}\be_{j}^{\top},\ \bfB=\sum_{j=1}^{p}b_{j}\be_{j}\be_{j}^{\top},
\end{align*}
and assume that $|a_{j}|<1$ and $b_{j}>0$ for all $j$. 
Without loss of generality, suppose that $|a_{1}|\ge |a_{2}|\ge\cdots\ge |a_{p}|$.
Assume that the distribution of $\bzeta_{i}$ satisfies a log-Sobolev constant $K_{\bzeta}$.
Then, by the argument of Chapter 3 of \citet{bosq2000linear}, we have that the covariance of $\bX_{i}$ is given as
\begin{equation*}
    \bSigma=\sum_{j=1}^{p}b_{j}^{2}\left(1+\sum_{k=1}^{\infty}a_{j}^{2k}\right)\be_{j}\be_{j}^{\top}=\sum_{j=1}^{p}b_{j}^{2}\left(1+\frac{a_{j}^{2}}{1-a_{j}^{2}}\right)\be_{j}\be_{j}^{\top}=\sum_{j=1}^{p}\frac{b_{j}^{2}}{1-a_{j}^{2}}\be_{j}\be_{j}^{\top}.
\end{equation*}
We use the moving-average representation of autoregressive processes and obtain
\begin{align*}
    \left[\begin{matrix}
        \bSigma^{-1/2}\bX_{n}\\
        \vdots\\
        \bSigma^{-1/2}\bX_{1}
    \end{matrix}\right]=\left[\begin{matrix}
        \sum_{j=1}^{p}\sqrt{1-a_{j}^{2}}\be_{j}\be_{j}^{\top} & \sum_{j=1}^{p}a_{j}\sqrt{1-a_{j}^{2}}\be_{j}\be_{j}^{\top} & \sum_{j=1}^{p}a_{j}^{2}\sqrt{1-a_{j}^{2}}\be_{j}\be_{j}^{\top}  & \cdots \\
        \bfO & \sum_{j=1}^{p}\sqrt{1-a_{j}^{2}}\be_{j}\be_{j}^{\top} & \sum_{j=1}^{p}a_{j}\sqrt{1-a_{j}^{2}}\be_{j}\be_{j}^{\top}  & \cdots\\
        \vdots & \vdots & \vdots & \\
        \bfO & \bfO& \bfO & \cdots
    \end{matrix}\right]
    \left[\begin{matrix}
        \bzeta_{n}\\
        \bzeta_{n-1}\\
        \bzeta_{n-2}\\
        \vdots
    \end{matrix}\right].
\end{align*}
Let $\bfC=\sum_{j=1}^{p}\sqrt{1-a_{j}^{2}}\be_{j}\be_{j}^{\top}$.
It is sufficient to examine the operator norm of the following upper triangular block matrix:
\begin{equation*}
    \bfM=\left[\begin{matrix}
        \bfC & \bfC\bfA & \bfC\bfA^{2} & \bfC\bfA^{3} &\cdots \\
        \bfO & \bfC & \bfC\bfA & \bfC\bfA^{2} & \cdots \\
        \bfO & \bfO & \bfC   & \bfC\bfA &\cdots\\
        \vdots & \vdots & \vdots & \vdots& \\
        \bfO & \bfO& \bfO & \bfO & \cdots
    \end{matrix}\right].
\end{equation*}
Lemma \ref{lem:causalblock} (with an abuse $m=\infty$; we can derive the same conclusion by truncation with finite $m$ and taking the limit after that) derives that $\|\bfM\|\le \|C\|\sum_{i=0}^{\infty}\|\bfA\|^{i}=\sqrt{1-a_{p}^{2}}/(1-|a_{1}|).$
Therefore, we obtain that the distribution of $(\bSigma^{-1/2}\bX_{1},\cdots,\bSigma^{-1/2}\bX_{n})$ satisfies a log-Sobolev inequality with constant $K_{\bzeta}(1-a_{p}^{2})/(1-|a_{1}|)^{2}$.

\subsection{Augmented processes for lagged covariance matrix estimation}\label{corr:sec:example:aug}
In this section, we argue the concentration of the sample covariance matrix of an augmented processes $((\bX_{i},\bX_{i+1}))_{i}$.
Section 4.2 of \citet{nakakita2024dimension} analyses augmented processes for lagged covariance matrix estimation, and Section 4.3 employs this discussion to investigate estimation of linear hidden Markov models.
We can easily extend Theorem \ref{thm:concentration} under a log-Sobolev inequality to augmented processes and recover those results.
First, let us define the following augmented sample covariance matrix:
\begin{equation*}
    \hat{\bSigma}_{0:1}:=\frac{1}{n-1}\sum_{i=1}^{n}\left[\begin{matrix}
        \bX_{i}\bX_{i}^{\top} & \bX_{i}\bX_{i+1}^{\top}\\
        \bX_{i+1}\bX_{i}^{\top} & \bX_{i+1}\bX_{i+1}^{\top}
    \end{matrix}\right]=:\left[\begin{matrix}
        \hat{\bSigma} & \hat{\bSigma}_{1}\\
        \hat{\bSigma}_{1}^{\top} & \hat{\bSigma}_{\textnormal{shift}}
    \end{matrix}\right].
\end{equation*}
We now state a general concentration result.
\begin{proposition}[correction to Proposition 8 of \citealp{nakakita2024dimension}]\label{prp:aug}
Assume that $(\bX_{i})_{i=1}^{n}$ be a sequence of $p$-dimensional random vectors satisfying the following conditions: (i) $\E[\bX_{i}\bX_{i}^{\top}]=\bSigma$ and $\E[\bX_{i}\bX_{i+1}^{\top}]=\bSigma_{1}$ for all $i=1,\ldots,n$,
and (ii) the distribution of the $\R^{2p(n-1)}$-valued random vector
\begin{equation*}
    \left(\bSigma_{0:1}^{\dagger}\left[\begin{matrix}\bX_{1}\\
    \bX_{2}
    \end{matrix}\right],\ldots,\bSigma_{0:1}^{\dagger}\left[\begin{matrix}\bX_{n-1}\\
    \bX_{n}\end{matrix}\right]\right)
\end{equation*}
satisfies a log-Sobolev inequality with constant $K_{0:1}$, where $\bSigma_{0:1}$ is defined as
\begin{equation*}
    \bSigma_{0:1}=\left[\begin{matrix}
        \bSigma & \bSigma_{1}\\
        \bSigma_{1}^{\top} & \bSigma
    \end{matrix}\right].
\end{equation*}
Then, for any $t\ge0$ and positive integer $n$ satisfying $n-1\ge 2K_{0:1}(9\er(\bSigma)+4t)$, with probability at least $1-\exp(-t)$,
    \begin{equation*}
        \left\|\hat{\bSigma}_{0:1}-\bSigma_{0:1}\right\|\le  12\sqrt{K_{0:1}}\|\bSigma_{0:1}\|\sqrt{\frac{9\er(\bSigma_{0:1})+4t}{n-1}}.
    \end{equation*}
    Moreover, on the same event, we have
    \begin{equation*}
        \max\{\|\hat{\bSigma}-\bSigma\|,\|\hat{\bSigma}_{1}-\bSigma_{1}\|\}\le 12\sqrt{2K_{0:1}}(\|\bSigma\|+\|\bSigma_{1}\|)\sqrt{\frac{9\er(\bSigma)+2t}{n-1}}.
    \end{equation*}
\end{proposition}

\begin{proof}
The first statement is a repetition of the statement of Theorem \ref{thm:concentration}.
The second statement used the facts that (i) for a given matrix, the operator norm of a submatrix is bounded above by the operator norm of the full matrix,
(ii) $\|\bSigma_{0:1}\|\le \|\bSigma\|+\|\bSigma_{1}\|$ derived as for any $\bu_{1},\bu_{2}\in\R^{p}$ with $\|\bu_{1}\|^{2}+\|\bu_{2}\|^{2}=1$,
\begin{equation*}
    \left[\begin{matrix}
        \bu_{1}^{\top} & \bu_{2}^{\top}
    \end{matrix}\right]\left[\begin{matrix}
        \bSigma & \bSigma_{1}\\
        \bSigma_{1}^{\top} & \bSigma
    \end{matrix}\right]
    \left[\begin{matrix}
        \bu_{1}\\
        \bu_{2}
    \end{matrix}\right]\le (\|\bu_{1}\|^{2}+\|\bu_{2}\|^{2})\|\bSigma\|+(2|\bu_{1}^{\top}\bu_{2}|)\|\bSigma_{1}\|\le \|\bSigma\|+\|\bSigma_{1}\|,
\end{equation*}
and (iii) $\er(\bSigma_{0:1})=2\tr(\bSigma)/\|\bSigma_{0:1}\|\ge 2\er(\bSigma)$ by $\|\bSigma_{0:1}\|\ge \|\bSigma\|$.
\end{proof}

Derivation of concrete bounds on the log-Sobolev constant $K_{0:1}$, however, is nontrivial.
Hence, we give a concrete estimate for $K_{0:1}$ under a mild condition.
\begin{lemma}\label{lem:lsi:aug}
    Assume that $(\bX_{i})_{i=1}^{n}$ be a sequence of $p$-dimensional random vectors such that (i) $\E[\bX_{i}\bX_{i}^{\top}]=\bSigma$ and $\E[\bX_{i}\bX_{i+1}^{\top}]=\bSigma_{1}$ for all $i=1,\ldots,n$.
    Furthermore, suppose that the distribution of $((\bSigma^{\dagger})^{1/2}\bX_{1},\ldots,(\bSigma^{\dagger})^{1/2}\bX_{n})$ satisfies a log-Sobolev inequality with constant $K$.
    Then, we have that $K_{0:1}\le 2K\varrho$, where $\varrho$ is a positive constant defined as
    \begin{align*}
     \varrho=\sup_{\bu\in\bbS^{2p-1}}\bu^{\top}\sqrt{\left[\begin{matrix}
        \bSigma & \bfO\\
        \bfO & \bSigma
    \end{matrix}\right]}\left[\begin{matrix}
        \bSigma & \bSigma_{1}\\
        \bSigma_{1}^{\top} & \bSigma
    \end{matrix}\right]^{\dagger}
    \sqrt{\left[\begin{matrix}
        \bSigma & \bfO\\
        \bfO & \bSigma
    \end{matrix}\right]}\bu.
\end{align*}
\end{lemma}

\begin{proof}
    Define the following map $\bsf:\R^{pn}\to\R^{2p(n-1)}$: for all $(\bx_{i})_{i=1}^{n}\subset\R^{p}$,
\begin{equation*}
    \bsf(\bx_{1},\ldots,\bx_{n})=[
        \underbrace{\bx_{1}^{\top} \ \bx_{2}^{\top}}_{=:\by_{1}^{\top}} \ \underbrace{\bx_{2}^{\top} \ \bx_{3}^{\top}}_{=:\by_{2}^{\top}} \ \underbrace{\bx_{3}^{\top} \ \bx_{4}^{\top}}_{=:\by_{3}^{\top}} \ \cdots \ 
        \underbrace{\bx_{n-1}^{\top} \ \bx_{n}^{\top}}_{=:\by_{n-1}^{\top}}
    ]^{\top}=[\by_{1}^{\top}\ \by_{2}^{\top}\ \cdots\ \by_{n-1}^{\top}]^{\top}.
\end{equation*}
The Lipschitz constant of $\bsf$ is given as
\begin{align*}
    \left\|\bfJ_{\bsf}(\bx_{1},\ldots,\bx_{n})\right\|&=\left\|\left[\begin{matrix}
        \bfI_{p} & \bfO & \bfO & \cdots & \bfO\\
        \bfO & \bfI_{p} & \bfO & \cdots & \bfO\\
        \bfO & \bfI_{p} & \bfO & \cdots & \bfO\\
        \bfO & \bfO & \bfI_{p} & \cdots & \bfO\\
        \bfO & \bfO & \bfI_{p} & \cdots & \bfO\\
        \vdots & \vdots & \vdots & \ddots & \vdots \\
        \bfO & \bfO & \bfO & \cdots & \bfI_{p}
    \end{matrix}\right]\right\|\\
    &=\sup_{\substack{(\bu_{i})_{i}\subset\R^{p}:\sum_{i}\|\bu_{i}\|^{2}=1,\\(\bv_{j})_{j}\subset\R^{p}:\sum_{j}\|\bv_{j}\|^{2}=1}}\left[
        \bu_{1}^{\top}\ \bu_{2}^{\top}\ \cdots\ \bu_{2(n-1)}
    \right]\left[\begin{matrix}
        \bfI_{p} & \bfO & \bfO & \cdots & \bfO\\
        \bfO & \bfI_{p} & \bfO & \cdots & \bfO\\
        \bfO & \bfI_{p} & \bfO & \cdots & \bfO\\
        \bfO & \bfO & \bfI_{p} & \cdots & \bfO\\
        \bfO & \bfO & \bfI_{p} & \cdots & \bfO\\
        \vdots & \vdots & \vdots & \ddots & \vdots \\
        \bfO & \bfO & \bfO & \cdots & \bfI_{p}
    \end{matrix}\right]\left[\begin{matrix}
        \bv_{1}\\
        \bv_{2}\\
        \bv_{3}\\
        \bv_{4}\\
        \bv_{5}\\
        \vdots \\
        \bv_{n}
    \end{matrix}\right]\\
    &=\sup_{\substack{(\bu_{i})_{i}\subset\R^{p}:\sum_{i}\|\bu_{i}\|^{2}=1,\\(\bv_{j})_{j}\subset\R^{p}:\sum_{j}\|\bv_{j}\|^{2}=1}}\left(\bu_{1}^{\top}\bv_{1}+\sum_{i=1}^{n-1}\left((\bu_{2i}+\bu_{2i+1})^{\top}\bv_{i}\right)+\bu_{2(n-1)}^{\top}\bv_{n}\right)\\
    &\le \sup_{\substack{(\bu_{i})_{i}\subset\R^{p}:\sum_{i}\|\bu_{i}\|^{2}=1,\\(\bv_{j})_{j}\subset\R^{p}:\sum_{j}\|\bv_{j}\|^{2}=1}}\sqrt{\|\bu_{1}\|^{2}+\sum_{i=1}^{n-1}\|\bu_{2i}+\bu_{2i+1}\|^{2}+\|\bu_{2(n-1)}\|^{2}}\sqrt{\sum_{i=1}^{n}\|\bv_{i}\|^{2}}\\
    &=\sup_{(\bu_{i})_{i}\subset\R^{p}:\sum_{i}\|\bu_{i}\|^{2}=1}\sqrt{\|\bu_{1}\|^{2}+\sum_{i=1}^{n-1}\|\bu_{2i}+\bu_{2i+1}\|^{2}+\|\bu_{2(n-1)}\|^{2}}\\
    &\le \sup_{(\bu_{i})_{i}\subset\R^{p}:\sum_{i}\|\bu_{i}\|^{2}=1}\sqrt{\|\bu_{1}\|^{2}+2\sum_{i=1}^{n-1}(\|\bu_{2i}\|^{2}+\|\bu_{2i+1}\|^{2})+\|\bu_{2(n-1)}\|^{2}}\\
    &\le \sup_{(\bu_{i})_{i}\subset\R^{p}:\sum_{i}\|\bu_{i}\|^{2}=1}\sqrt{2\sum_{i=1}^{2(n-1)}\|\bu_{i}\|^{2}}\\
    &\le \sqrt{2}.
\end{align*}
Consequently, the distribution of the augmented process $((\bSigma^{\dagger})^{1/2}\bX_{i},(\bSigma^{\dagger})^{1/2}\bX_{i+1})_{i=1}^{n-1}$ satisfies a log-Sobolev inequality with constant $2K$.
Then, we obtain that the augmented process
\begin{equation*}
    \left(\sqrt{\left[\begin{matrix}
        \bSigma & \bSigma_{1}\\
        \bSigma_{1}^{\top} & \bSigma
    \end{matrix}\right]^{\dagger}}\left[\begin{matrix}
        \bX_{i}\\
        \bX_{i+1}
    \end{matrix}\right]\right)_{i=1}^{n-1}=\left(\sqrt{\left[\begin{matrix}
        \bSigma & \bSigma_{1}\\
        \bSigma_{1}^{\top} & \bSigma
    \end{matrix}\right]^{\dagger}}\sqrt{\left[\begin{matrix}
        \bSigma & \bfO\\
        \bfO & \bSigma
    \end{matrix}\right]}\left[\begin{matrix}
        (\bSigma^{\dagger})^{1/2}\bX_{i}\\
        (\bSigma^{\dagger})^{1/2}\bX_{i+1}
    \end{matrix}\right]\right)_{i=1}^{n-1}
\end{equation*}
satisfies a log-Sobolev inequality with constant $2K\varrho$; the identity holds since the probability that $\bX_{i}$ takes its value out of the span of the eigenvectors of $\bSigma$ is zero.
\end{proof}

$\varrho$ measures the degree of dependence between $\bX_{i}$ and $\bX_{i+1}$.
For example, if $\bSigma_{1}=\bfO$ (independent case), then $\varrho=1$.
See Section \ref{corr:sec:example:hmm} for an example of a dimension-free $\varrho$ under dependence.

\subsection{Linear hidden Markov processes}\label{corr:sec:example:hmm}
Consider the following linear hidden Markov model:
\begin{align*}
  \bY_{i}&=\bX_{i}+\bvarepsilon_{i}\\
  \bX_{i}&=\bfA\bX_{i-1}+\bxi_{i},
\end{align*}
where $(\bY_{i})_{i\in\Z}$ is a stationary observed process, $(\bX_{i})_{i\in\Z}$ is a stationary latent process, $\bfA\in\R^{p\times p}$ is an unknown parameter with $\|\bfA\|<1$, and $(\bvarepsilon_{i})_{i\in\Z}$ and $(\bxi_{i})_{i\in\Z}$ are sequences of $p$-dimensional i.i.d.~random vectors satisfying $\E[\bvarepsilon_{i}]=\E[\bxi_{i}]=\zero$ and $\Cov(\bvarepsilon_{i})=\Cov(\bxi_{i})=\bfI_{p}$.
Furthermore, $(\bvarepsilon_{i})_{i\in\Z}$ and $(\bxi_{i})_{i\in\Z}$ are independent of each other, and the distributions of $\bvarepsilon_{i}$ and $\bxi_{i}$ satisfy log-Sobolev inequalities with constants $K_{\bvarepsilon}$ and $K_{\bxi}$ respectively.
We use the notation $\bSigma_{1}^{\top}=\E[\bY_{i}\bY_{i-1}^{\top}]$ and $\bSigma=\E[\bY_{i}\bY_{i}^{\top}]$.

We consider the problem of estimating $\bfA$ by $(\bY_{i})_{i=1}^{n}$.
Let us first derive a representation of $\bfA$ in terms of $\bSigma_{1}$ and $\bSigma$.
Since
\begin{align*}
    \bY_{i}=\bfA\bX_{i-1}+\bxi_{i}+\bvarepsilon_{i}=\bfA\bY_{i-1}+\bxi_{i}+\bvarepsilon_{i}-\bfA\bvarepsilon_{i-1},
\end{align*}
we yield
\begin{align*}
    \bSigma_{1}^{\top}&=\E\left[\bY_{i}\bY_{i-1}^{\top}\right]\\
    &=\bfA\E\left[\bY_{i-1}\bY_{i-1}^{\top}\right]+\E\left[\bxi_{i}\bY_{i-1}^{\top}\right]+\E\left[\bvarepsilon_{i}\bY_{i-1}^{\top}\right]-\bfA\E\left[\bvarepsilon_{i-1}\bY_{i-1}^{\top}\right]\\
    &=\bfA\E\left[\bY_{i-1}\bY_{i-1}^{\top}\right]-\bfA\E\left[\bvarepsilon_{i-1}\bepsilon_{i-1}^{\top}\right]\\
    &=\bfA\left(\bSigma-\bfI_{p}\right).
\end{align*}
Noting that $\bSigma-\bfI_{p}=\Cov(\bX_{i})$ and $\Cov(\bX_{i})=\bfA\Cov(\bX_{i-1})\bfA^{\top}+\bfI_{p}\succeq\bfI_{p}$, we notice that $\bSigma-\bfI_{p}$ is invertible, and 
\begin{align*}
    \bfA&=\bSigma_{1}^{\top}\left(\bSigma-\bfI_{p}\right)^{-1}.
\end{align*}
\begin{remark}
\citet{nakakita2024dimension} write that $\bfA=\bSigma_{1}^{\top}(\bSigma+\bfI_{p})^{-1}$, which is another error.
We correct this error and argue the estimation of $\bfA$.
\end{remark}

Motivated by this representation and using $\hat{\bSigma}_{1}^{\top}=(1/(n-1))\sum_{i=2}^{n}(\bY_{i}\bY_{i-1}^{\top})$ and $\hat{\bSigma}=(1/(n-1))\sum_{i=1}^{n-1}(\bY_{i}\bY_{i}^{\top})$, we set the following estimator of $\bfA$:
\begin{equation*}
    \hat{\bfA}=\begin{cases}
        \hat{\bSigma}_{1}^{\top}\left(\hat{\bSigma}-\bfI_{p}\right)^{-1} &\text{if }\hat{\bSigma}-\bfI_{p}\text{ is invertible},\\
        \bfI_{p}&\text{otherwise}.
    \end{cases}
\end{equation*}
Note that we can choose an arbitrary value as $\hat{\bfA}$ when $\hat{\bSigma}-\bfI_{p}$ is not invertible.

We obtain the following bound on the estimation error of $\hat{\bfA}$.
\begin{proposition}
Assume that the distribution of $(\bSigma^{-1/2}\bY_{1},\ldots,\bSigma^{-1/2}\bY_{n})$ satisfies a log-Sobolev inequality with constant $K$.
Furthermore, we set $\varrho$ as a constant defined as
\begin{equation*}
    \varrho:=\sup_{\bu\in\bbS^{2p-1}}\bu^{\top}\left[\begin{matrix}
        \bSigma & \bfO\\
        \bfO & \bSigma
    \end{matrix}\right]^{1/2}\left[\begin{matrix}
        \bSigma & \bSigma_{1}\\
        \bSigma_{1}^{\top} & \bSigma
    \end{matrix}\right]^{\dagger}
    \left[\begin{matrix}
        \bSigma & \bfO\\
        \bfO & \bSigma
    \end{matrix}\right]^{1/2}\bu.
\end{equation*}
Then, for any $t\ge0$ and positive integer $n$ satisfying 
\begin{align*}
  n\ge 1+48^{2}K\varrho(\|\bSigma\|+\|\bSigma_{1}\|)^{2}(9\er(\bSigma)+2t),
\end{align*}
we have
\begin{align*}
    \|\hat{\bfA}-\bfA\|\le 96\sqrt{K\varrho}\left(1+\|\bSigma_{1}\|\right)(\|\bSigma\|+\|\bSigma_{1}\|)\sqrt{\frac{9\er(\bSigma)+2t}{n-1}}.
\end{align*}
\end{proposition}
\begin{proof}
First, since $n$ satisfies
\begin{align*}
  n\ge 1+48^{2}K\varrho(\|\bSigma\|+\|\bSigma_{1}\|)^{2}(9\er(\bSigma)+2t),
\end{align*}
Proposition \ref{prp:aug} and Lemma \ref{lem:lsi:aug} derive that with probability at least $1-e^{-t}$,
\begin{equation}\label{eq:hmm:event}
  \max\{\|\hat{\bSigma}-\bSigma\|,\|\hat{\bSigma}_{1}-\bSigma_{1}\|\}\le 24\sqrt{K\varrho}(\|\bSigma\|+\|\bSigma_{1}\|)\sqrt{\frac{9\er(\bSigma)+2t}{n-1}},
\end{equation}
and this upper bound is further bounded above by $1/2$.
Let us suppose that the event \eqref{eq:hmm:event} holds hereafter.

We check the non-degeneracy of $\hat{\bSigma}-\bfI_{p}$ on the event.
Since $\bSigma-\bfI_{p}=\Cov(\bX_{i})=\bfA\Cov(\bX_{i-1})\bfA^{\top}+\bfI_{p}\succeq\bfI_{p}$, we have
\begin{align*}
  \lambda_{\min}(\hat{\bSigma}-\bfI_{p})\ge \lambda_{\min}(\bSigma-\bfI_{p})- \|(\hat{\bSigma}-\bfI_{p})-(\bSigma-\bfI_{p})\|\ge 1-\|\hat{\bSigma}-\bSigma\|\ge 1/2.
\end{align*}
Therefore, on this event, $(\hat{\bSigma}-\bfI_{p})^{-1}$ is invertible, and  $\hat{\bfA}=\hat{\bSigma}_{1}^{\top}(\hat{\bSigma}-\bfI_{p})^{-1}$.

Now let us bound $\|\hat{\bfA}-\bfA\|$ from above.
On the event \eqref{eq:hmm:event}, we obtain that
\begin{align*}
  \|\hat{\bfA}-\bfA\|&= \left\|\hat{\bSigma}_{1}^{\top}(\hat{\bSigma}-\bfI_{p})^{-1}-\bSigma_{1}^{\top}(\bSigma-\bfI_{p})^{-1}\right\|\\
  &=\left\|\left(\hat{\bSigma}_{1}^{\top}-\bSigma_{1}^{\top}\right)(\hat{\bSigma}-\bfI_{p})^{-1}+\bSigma_{1}^{\top}\left((\hat{\bSigma}-\bfI_{p})^{-1}-(\bSigma-\bfI_{p})^{-1}\right)\right\|\\
  &=\left\|\left(\hat{\bSigma}_{1}^{\top}-\bSigma_{1}^{\top}\right)(\hat{\bSigma}-\bfI_{p})^{-1}+\bSigma_{1}^{\top}\left((\hat{\bSigma}-\bfI_{p})^{-1}(\bSigma-\hat{\bSigma})(\bSigma-\bfI_{p})^{-1}\right)\right\|\\
  &\le \left\|\left(\hat{\bSigma}_{1}^{\top}-\bSigma_{1}^{\top}\right)(\hat{\bSigma}-\bfI_{p})^{-1}\right\|+\left\|\bSigma_{1}^{\top}\left((\hat{\bSigma}-\bfI_{p})^{-1}(\bSigma-\hat{\bSigma})(\bSigma-\bfI_{p})^{-1}\right)\right\|\\
  &\le 2\left\|\hat{\bSigma}_{1}-\bSigma_{1}\right\|+2\left\|\bSigma_{1}\right\|\left\|\hat{\bSigma}-\bSigma\right\|\\
  &\le 4(1+\|\bSigma_{1}\|)\max\{\|\hat{\bSigma}-\bSigma\|,\|\hat{\bSigma}_{1}-\bSigma\|\}\\
  &\le 96\sqrt{K\varrho}\left(1+\|\bSigma_{1}\|\right)(\|\bSigma\|+\|\bSigma_{1}\|)\sqrt{\frac{9\er(\bSigma)+2t}{n-1}},
\end{align*}
where the second inequality is owing to $\lambda_{\min}(\hat{\bSigma}-\bfI_{p})\ge 1/2$ and $\lambda_{\min}(\bSigma-\bfI_{p})\ge 1$.
\end{proof}

We study an example of the log-Sobolev constant $K$ and $\varrho$.
Suppose that $\bfA=\sum_{i=1}^{p}a_{i}\be_{i}\be_{i}^{\top}$ for $|a_{1}|\ge |a_{2}|\ge \cdots\ge |a_{p}|$ and an orthonormal basis $\{\be_{i}\}$, and the distributions of $\bvarepsilon_{i}$ and $\bxi_{i}$ satisfy log-Sobolev inequality with constant $K_{\bvarepsilon}$ and $K_{\bxi}$ respectively.
In a similar manener to Section \ref{corr:sec:example:var}, we obtain that the distribution of $(\bX_{1},\ldots,\bX_{n})$ (that is, the latent process without the standardization by $\Cov(\bX_{i})$) satisfies a log-Sobolev inequality with constant $K_{\bxi}/(1-|a_{1}|)^{2}$ since
\begin{equation*}
    \left[\begin{matrix}
        \bX_{n}\\
        \vdots\\
        \bX_{1}
    \end{matrix}\right]
    =
    \left[\begin{matrix}
        \sum_{j=1}^{p}\be_{j}\be_{j}^{\top} & \sum_{j=1}^{p}a_{j}\be_{j}\be_{j}^{\top} & \sum_{j=1}^{p}a_{j}^{2}\be_{j}\be_{j}^{\top} & \cdots \\
        \bfO & \sum_{j=1}^{p}\be_{j}\be_{j}^{\top} & \sum_{j=1}^{p}a_{j}\be_{j}\be_{j}^{\top} &  \cdots \\
        \vdots & \vdots & \vdots & \\
        \bfO & \bfO & \bfO & \cdots
    \end{matrix}\right]
    \left[\begin{matrix}
        \bxi_{n}\\
        \bxi_{n-1}\\
        \bxi_{n-2}\\
        \vdots
    \end{matrix}\right],
\end{equation*}
and Lemma \ref{lem:causalblock} (again with an abuse $m=\infty$ for brevity) yields that the operator norm of the matrix above is bounded above by $1/(1-|a_{1}|)$.
The independence of $(\bX_{1},\ldots,\bX_{n})$ and $(\bvarepsilon_{1},\ldots,\bvarepsilon_{n})$ and the tensorization argument \citep{bakry2014analysis} lead to that the joint distribution of $(\bX_{1},\ldots,\bX_{n})$ and $(\bvarepsilon_{1},\ldots,\bvarepsilon_{n})$ satisfies a log-Sobolev inequality with constant $\max\{K_{\bxi}/(1-|a_{1}|)^{2},K_{\bvarepsilon}\}$.
For some permutation matrix $\bfP\in\R^{2pn\times2pn}$,
\begin{align*}
    \left[\begin{matrix}
        \bY_{1}\\
        \vdots\\
        \bY_{n}
    \end{matrix}\right]
    &=\left[\begin{matrix}
        \bX_{1}\\
        \vdots\\
        \bX_{n}
    \end{matrix}\right]+
    \left[\begin{matrix}
        \bvarepsilon_{1}\\
        \vdots\\
        \bvarepsilon_{n}
    \end{matrix}\right]=\left[\begin{matrix}
        \bfI_{p} & \bfI_{p} & \bfO & \bfO & \cdots& \bfO & \bfO\\
        \bfO & \bfO & \bfI_{p} & \bfI_{p} & \cdots & \bfO & \bfO \\
        \vdots & \vdots & \vdots &\vdots &  &\vdots & \vdots\\
        \bfO & \bfO &  \bfO & \bfO & \cdots &  \bfI_{p} & \bfI_{p} 
    \end{matrix}
    \right]
    \left[\begin{matrix}
        \bX_{1}\\
        \bvarepsilon_{1}\\
        \vdots\\
        \bX_{n}\\
        \bvarepsilon_{n}
    \end{matrix}\right]\\
    &=\left[\begin{matrix}
        \bfI_{p} & \bfI_{p} & \bfO & \bfO & \cdots& \bfO & \bfO\\
        \bfO & \bfO & \bfI_{p} & \bfI_{p} & \cdots & \bfO & \bfO \\
        \vdots & \vdots & \vdots &\vdots &  &\vdots & \vdots\\
        \bfO & \bfO &  \bfO & \bfO & \cdots &  \bfI_{p} & \bfI_{p} 
    \end{matrix}
    \right]\bfP
    \left[\begin{matrix}
        \bX_{1}\\
        \vdots\\
        \bX_{n}\\
        \bvarepsilon_{1}\\
        \vdots\\
        \bvarepsilon_{n}
    \end{matrix}\right]=:\bfJ\bfP\left[\begin{matrix}
        \bX_{1}\\
        \vdots\\
        \bX_{n}\\
        \bvarepsilon_{1}\\
        \vdots\\
        \bvarepsilon_{n}
    \end{matrix}\right].
\end{align*}
The matrix $[\bfI_{p}\ \bfI_{p}]\in\R^{p\times 2p}$ has the operator norm bounded above by $\sqrt{2}$ since $\|\bv_{1}+\bv_{2}\|^{2}=\sum_{i=1}^{p}(\bv_{1}^{(i)}+\bv_{2}^{(i)})^{2}\le 2\sum_{i=1}^{p}((\bv_{1}^{(i)})^{2}+(\bv_{2}^{(i)})^{2})=2$ for any $\bv_{1},\bv_{2}\in\R^{p}$ with $\|\bv_{1}\|^{2}+\|\bv_{2}\|^{2}=1$,
and thus the operator norm of $\bfJ$ is at most $\sqrt{2}$ again by Lemma \ref{lem:causalblock}.
Since $\|\bfP\|\le 1$, the log-Sobolev constant of $(\bY_{1},\ldots,\bY_{n})$ is bounded above by $2\max\{K_{\bxi}/(1-|a_{1}|)^{2},K_{\bvarepsilon}\}$.
Using $\|\bfI\otimes\bSigma^{-1/2}\|\le 1$,
we obtain that the log-Sobolev constant $K$ of $(\bSigma^{-1/2}\bY_{1},\ldots, \bSigma^{-1/2}\bY_{n})$ satisfies
\begin{equation*}
    K\le 2\max\left\{\frac{K_{\bxi}}{\left(1-|a_{1}|\right)^{2}},K_{\bvarepsilon}\right\}.
\end{equation*}
What remains is an estimate of $\varrho$.
We have
\begin{align*}
    \bSigma_{1}&=\bfA\left(\bSigma-\bfI_{p}\right)=\sum_{j=1}^{p}a_{j}\be_{j}\be_{j}^{\top}\left(\sum_{j=1}^{p}\frac{1}{1-a_{j}^{2}}\be_{j}\be_{j}^{\top}-\sum_{j=1}^{p}\be_{j}\be_{j}^{\top}\right)\\
    &=\sum_{j=1}^{p}a_{j}\be_{j}\be_{j}^{\top}\left(\sum_{j=1}^{p}\frac{a_{j}^{2}}{1-a_{j}^{2}}\be_{j}\be_{j}^{\top}\right)=\sum_{j=1}^{p}\frac{a_{j}^{3}}{1-a_{j}^{2}}\be_{j}\be_{j}^{\top},
\end{align*}
and thus we obtain the spectral representation
\begin{equation*}
    \left[\begin{matrix}
        \bSigma & \bSigma_{1}\\
        \bSigma_{1}^{\top} &\bSigma
    \end{matrix}\right]=\left[\begin{matrix}
        \bSigma & \bfO\\
        \bfO &\bSigma
    \end{matrix}\right]^{1/2}\left[\begin{matrix}
        \sum_{j=1}^{p}\be_{j}\be_{j}^{\top} & \sum_{j=1}^{p}a_{j}^{3}\be_{j}\be_{j}^{\top}\\
        \sum_{j=1}^{p}a_{j}^{3}\be_{j}\be_{j}^{\top} &\sum_{j=1}^{p}\be_{j}\be_{j}^{\top}
    \end{matrix}\right]\left[\begin{matrix}
        \bSigma & \bfO\\
        \bfO &\bSigma
    \end{matrix}\right]^{1/2}.
\end{equation*}
Without loss of generality, we set the Cartesian basis as $\{\be_{i}\}$.
Then, for any unit vector $\bu=(\bu_{1},\ldots,\bu_{2p})\in\bbS^{2p-1}$,
\begin{align*}
    \bu^{\top}\left[\begin{matrix}
        \sum_{j=1}^{p}\be_{j}\be_{j}^{\top} & \sum_{j=1}^{p}a_{j}^{3}\be_{j}\be_{j}^{\top}\\
        \sum_{j=1}^{p}a_{j}^{3}\be_{j}\be_{j}^{\top} &\sum_{j=1}^{p}\be_{j}\be_{j}^{\top}
    \end{matrix}\right]\bu
    &=\sum_{j=1}^{p}\bu_{j}^{2}+2\sum_{j=1}^{p}a_{j}^{3}\bu_{j}\bu_{p+j}+\sum_{j=1}^{p}\bu_{p+j}^{2}\\
    &\ge\sum_{j=1}^{p}\bu_{j}^{2}-2\sum_{j=1}^{p}|a_{j}|^{3}|\bu_{j}||\bu_{p+j}|+\sum_{j=1}^{p}\bu_{p+j}^{2}\\
    &\ge\sum_{j=1}^{p}\bu_{j}^{2}-2\sum_{j=1}^{p}\|\bfA\|^{3}|\bu_{j}||\bu_{p+j}|+\sum_{j=1}^{p}\bu_{p+j}^{2}\\
    &=\sum_{j=1}^{p}(1+\|\bfA\|^{3})\bu_{j}^{2}+\sum_{j=1}^{p}(1+\|\bfA\|^{3})\bu_{p+j}^{2}\\
    &\quad-\sum_{j=1}^{p}\|\bfA\|^{3}(|\bu_{j}|^{2}+2|\bu_{j}||\bu_{p+j}|+|\bu_{p+j}|^{2})\\
    &\ge (1+\|\bfA\|^{3})-2\|\bfA\|^{3}\sum_{j=1}^{p}(\bu_{j}^{2}+\bu_{p+j}^{2})\\
    &=1-\|\bfA\|^{3}.
\end{align*}
Therefore, we obtain
\begin{equation*}
    \varrho=\lambda_{\max}\left(\left[\begin{matrix}
        \sum_{j=1}^{p}\be_{j}\be_{j}^{\top} & \sum_{j=1}^{p}a_{j}^{3}\be_{j}\be_{j}^{\top}\\
        \sum_{j=1}^{p}a_{j}^{3}\be_{j}\be_{j}^{\top} &\sum_{j=1}^{p}\be_{j}\be_{j}^{\top}
    \end{matrix}\right]^{-1}\right)\le \frac{1}{1-|a_{1}|^{3}}.
\end{equation*}
As a result, we have that with probability $1-e^{-t}$,
\begin{align*}
    \|\hat{\bfA}-\bfA\|\le 96\sqrt{\frac{2}{1-|a_{1}|^{3}}\max\left\{\frac{K_{\bxi}}{\left(1-|a_{1}|\right)^{2}},K_{\bvarepsilon}\right\}}\left(1+\|\bSigma_{1}\|\right)(\|\bSigma\|+\|\bSigma_{1}\|)\sqrt{\frac{9\er(\bSigma)+2t}{n-1}}.
\end{align*}

    \putbib[cor_bibliography]
\end{bibunit}
  


\title{Dimension-free Bounds for Sums of Dependent Matrices and Operators with Heavy-Tailed Distributions}
\author{Shogo Nakakita\textsuperscript{1}, Pierre Alquier\textsuperscript{2}, and Masaaki Imaizumi\textsuperscript{1,3}\vspace{2ex}\\
{\it \textsuperscript{1}The University of Tokyo, \textsuperscript{2}ESSEC Business School,}\\
{\it \textsuperscript{3}RIKEN Center for Advanced Intelligence Project}}
\maketitle

\begin{bibunit}[apalike]

\setcounter{section}{0}
\setcounter{theorem}{0}

\begin{abstract}

We prove deviation inequalities for sums of high-dimensional random matrices and operators with dependence and {\rc heavy tails}. Estimation of high-dimensional matrices is a concern for numerous modern applications. However, most results are stated for independent observations. Therefore, it is critical to derive results for dependent and heavy-tailed matrices. In this paper, we derive a dimension-free upper bound on the deviation of the sums. Thus, the bound does not depend explicitly on the dimension of the matrices but rather on their effective rank. Our result generalizes several existing studies on the deviation of sums of matrices. It relies on two techniques: (i) a variational approximation of the dual of moment generating functions, and (ii) robustification through the truncation of the eigenvalues of the matrices. We reveal that our results are applicable to several problems, such as covariance matrix estimation, hidden Markov models, and overparameterized linear regression.
\end{abstract}

\section{Introduction}

We study non-asymptotic upper bounds on the deviations of the sums of multiple random matrices (or operators) from its expectation.
Assume that we observe a sequence of $n$ random, symmetric matrices $M_1,\ldots, M_n$ that are potentially high-dimensional, dependent, and heavy-tailed, but have a common expectation $\Sigma:= \Ep[M_\ell]$.
We are interested in evaluating the deviation of their empirical mean from the expectation $\Sigma$, measured in terms of the operator norm $\|\cdot\|$ for matrices. Specifically, we want to derive an upper bound of the following value for each integer $n$:
\begin{align*}
    \left\| \frac{1}{n} \sum_{\ell = 1}^n M_\ell - \Sigma \right\|.
\end{align*}

This problem is foundational and important; moreover, it has a variety of applications, the most typical example being the estimation of covariance matrices. 
Let $Y_1,\ldots ,Y_n$ be a sequence of random vectors; then, we can estimate its covariance matrix $\Sigma = \Ep[Y_1 Y_1^\top]$ using the empirical mean $n^{-1} \sum_{\ell = 1}^n M_\ell $ by defining $M_\ell = Y_\ell Y_\ell^\top$.
This setup can easily be applied for estimating Fisher information matrices, for example. 
Other applications include estimation of adjacency matrices of random graphs \citep{oliveira2009concentration}, signal recovery in compressed sensing \citep{donoho2006compressed}, and linear regression under overparameterization \citep{bartlett2020benign}.
Considering the increasing variety of data in modern data science, it is expedient to study the upper bound in various settings, including dependent or heavy-tailed observations $M_\ell$.

{\rc 
This problem has been actively investigated in various directions. 
The first study \citep{rudelson1999random} derived upper bounds on the operator norm of the deviation.
In the high-dimensional case, several studies
\citep{bunea2015sample,mendelson2014singular,srivastava2013covariance,koltchinskii2017concentration} derived upper bounds that do not depend on the dimensionality of the matrices $M_\ell$, referred to as \textit{dimension-free} bounds, using instead the effective rank of the matrices.
Particularly, another study \citep{giulini2018robust} investigated an infinite-dimensional version of the problem.
These results enable us to estimate high-dimensional matrices without assuming sparsity \citep{cai2010optimal} or specifying the distribution of the matrix \citep{adamczak2010quantitative,guedon2007lp,zhivotovskiy2021dimension}. 
The exact asymptotic risk is also studied by \citep{han2022exact}.
A bootstrap method and dimension-free bound are developed by \citep{lopes2023bootstrapping} for high-dimensional operators in this setting.
In the case of heavy-tailed matrices $M_\ell$, a study \citep{liaw2017simple} derived a dimension-free upper bound that clarifies how the tail property affects the bound.
The tightness of the bound is further improved by \citep{Vershynin2018High,jeong2022sub} in the heavy-tailed setting.
In the case of dependent matrices, a study \citep{han2020moment} derived a bound in expectation, following the approach of
\citep{handel2017structured}.
}

\subsection{Focus and Result}

We aim to derive a dimension-free upper bound on the deviations of the empirical mean of random matrices that are dependent and heavy-tailed. 
This setting is a generalization of the aforementioned studies. 
To handle the setup, we first introduce {\rc notation} and assumptions.
Let $\mathbb{H}$ be a Hilbert space. The first time, we only consider $\mathbb{H} = \mathbb{R}^p$. We then extend the results to an infinite-dimensional $\mathbb{H}$.

Let $M$ be a symmetric linear operator from $\mathbb{H}$ to itself.  
In dimension-free bounds, the dimension of $\mathbb{H}$ is replaced by the \textit{effective rank}, as shown by \citep{koltchinskii2017concentration}. It is defined as follows:
\begin{definition}[Effective Rank]
For a symmetric positive semi-definite {\rc trace-class} operator $M: \mathbb{H} \to \mathbb{H}$, the effective rank is defined as
\begin{align*}
    \mathbf{r}\left(M\right) := \frac{\mathrm{Tr}(M)}{\|M\|},
\end{align*}
where $\mathrm{Tr}(M)$ denotes the trace of $M$ and $\|M\|$ is its operator norm.
\end{definition}
\noindent
It can be interpreted as measuring the effective dimension of the image of $M$, which can be smaller than the actual dimension of $ \mathbb{H}$.

To measure the dependence of a sequence of matrices $M_1,\ldots, M_n$, we consider a coefficient $\Gamma_{\ell,n}$ for $\ell = 1,\ldots, n$ that bounds the martingale increment
\begin{align*}
\abs{ \mathbb{E} [ g(M_{\ell +1},\ldots ,M_n) \mid \mathcal{F}_\ell] - \mathbb{E} [ g(M_{\ell +1}, \ldots ,M_n)] } \leq \Gamma_{\ell,n},
\end{align*}
for any Lipschitz-continuous function $g(.)$, where $\mF_\ell = \sigma(M_1,\ldots,M_\ell)$ is the $\sigma$-algebra generated by  $M_1,\ldots,M_\ell$. 
The formal definition of the coefficient from \citep{rio2000inegalites, dedecker2007weak}, which has been used in many papers on dependent variables, is provided below. This coefficient leads to a general notion of dependence that includes many dependent processes, such as causal Bernoulli shifts and chains with infinite memory.
We discuss this point below.

We introduce functions $\Surv_\ell: \mathbb{R}_+ \to \mathbb{R}_+$ for $\ell = 1,\ldots,n$ to measure the tail of the distribution of $\|M_\ell\|$ as $\Surv_\ell(t) = \mathbb{P}(\|M_\ell\|\geq t)$. We also define 
\begin{align}
    G(t) := \max_{1\leq \ell\leq n} \int_{t}^{\infty} \Surv_\ell(u) {\rm d} u .
\end{align}
Our general bounds are given in terms of $\Surv_\ell(\cdot)$ and $G(\cdot)$. We can then study how the tail probability of $\|M_\ell\|$ affects the order of magnitude of the upper bound. 
Particularly, when $\|M_\ell\|$ is bounded, we obtain the rates proven in \citep{koltchinskii2017concentration,zhivotovskiy2021dimension} for independent observations, but in a broader dependent framework. 
In the case where $\Surv_\ell(t)$ and $G(t)$ decay exponentially fast in $t$, we recover the same rates up to a $\log n$ factor. Our results also provide a rate of convergence in the case where $G(t)$ decays polynomially in $t$ (in this case, the rate is slower).

Our main result takes the form of an upper bound in probability on the deviation of the empirical mean of the matrices.
Therefore, for any $t, \tau > 0$, the following inequality holds with probability at least $1-\exp(-t) - \sum_{\ell=1}^n \Surv_\ell(\tau)$:
\begin{align*}
    \left\|\frac{1}{n}\sum_{\ell =1}^{n} M_{\ell}-\Sigma\right\|\le2\sqrt{2}\left\|\Sigma\right\|\left(2\tau+ \max_{\ell = 1,\ldots,n}\Gamma_{\ell, n}\right)\sqrt{\frac{4\mathbf{r}\left(\Sigma\right)+t}{n}} + G(\tau).
\end{align*}
The results suggest that (i) we can obtain a dimension-free upper bound under quite general conditions of heavy-tail and dependence; (ii) the dependence property affects the bound through a factor $\Gamma_{\ell, n}$; and (iii) the heavy-tail property appears to affect the bound via a factor $2\tau$ and an additional term $G(\tau)$, where $\tau$ is a free parameter that can be adjusted to balance both terms. Particularly, even under a slow, polynomial decay of $G(\tau)$, we can still select $\tau=\tau_n$ such that $\sum_{\ell=1}^n \Surv_\ell(\tau) = o(1)$ and $G(\tau)=o(1)$ and then obtain an upper bound that converges to $0$, but at a rate slower than $1/\sqrt{n}$.

From a technical perspective, this study makes two contributions. 
The first is the evaluation of the moment-generating function using a variational inequality, following \citep{catoni2017dimension}. We {\rc provide an} upper bound the deviation of the sum of matrices using its moment-generating function. 
This approach was employed by \citep{zhivotovskiy2021dimension} and others. We extend it to our setting with dependent random matrices using an inequality {\rc due to}~\citep{rio2000inegalites}.
The second is the truncation technique that addresses heavy tails. This technique is classical in addressing unbounded losses in machine learning and has been used in the context of time series by~\cite{alquier2012model}.
It is also related to the influence function used in works on robust statistics~\citep{catoni2012challenging,catoni2017dimension,zhivotovskiy2021dimension,abdalla2022covariance}.
We apply this technique in our setting by truncating the eigenvalues of the dependent random matrices.
Specifically, we control the effect of the heavy tails by decomposing the deviation of the empirical mean into two parts: the deviations of the truncated mean and the deviations between the truncated and standard means.

\subsection{Organization}

Section \ref{sec:setting} introduces the setting of the problem and also provides assumptions and examples of situations where they are satisfied.
Section \ref{sec:result} presents the main results. We begin with the case of dependent but bounded matrices in Theorem~\ref{thm:main}. We then extend this result to the unbounded case in Corollary~\ref{cor:standard}.
Section \ref{sec:application} describes several applications in which we apply our bound.
Section \ref{sec:proof} contains the proofs of the main results.
Section \ref{sec:conclusion} concludes the paper.
The appendix provides the rest of the proofs.

\subsection{Notation}

Let $\mathbb{H}$ be a Hilbert space equipped with the scalar product $\left<\cdot,\cdot \right>$ and $\|\cdot\|$ be the corresponding norm. Let $\mathcal{S}$ be the set of symmetric linear continuous operators $\mathbb{H}\rightarrow\mathbb{H}$, that is, for any $(u,v)\in\mathbb{H}^2$, and for any $M\in\mathcal{S}$, $\left<Mu,v\right>=\left<u,Mv\right>< \infty$. In the special case $\mathbb{H}=\mathbb{R}^p$, $\mathcal{S}$ is simply the set of symmetric matrices. For any $M\in\mathcal{S}$ we let $\|M\|$ denote its operator norm $\|M\|=\sup_{u\in\mathbb{H},\|u\|=1}\|Mu\|$.
Throughout this paper, $\mathbf{M}=(M_\ell)_{\ell=1,\dots,n}$ is a finite random sequence of elements of $\mathcal{S}$, whose expectation is constant with $\Sigma = \mathbb{E}[M_\ell]$. 
{\rc 
Note that an expectation of a random operator $M_\ell$ is defined as a linear operator $\Sigma: \mathbb{H} \to \mathbb{H}$ satisfying $\langle u, \Sigma v \rangle = \mathbb{E}[\langle u, M_\ell v \rangle]$ for any $u,v \in \mathbb{H}$.
}
This paper aims to examine the estimation of $\Sigma$. For any $\ell\in\{0,\dots,n\}$, let $\mathcal{F}_{\ell}=\sigma(M_1,\dots,M_\ell)$.
For probability measures $P,P'$, $P \ll P'$ means that $P$ is absolutely continuous with respect to $P'$, and $\mathrm{KL}(P \| P') = \int \log (dP / dP') dP$ denotes the Kullback--Leibler divergence.
{\rc For a sequence of sets $A_1,A_2,...$, we define $\bigtimes_{i=1}^\infty A_i := A_1 \times A_2 \times \cdots$. }

\section{Dependent Matrices with Heavy-Tailed Distributions}
\label{sec:setting}

\subsection{Setup}

First, we introduce some assumptions on $\mathbf{M}=(M_\ell)_{\ell=1,\dots,n}$. The first is the dependence between the  $M_\ell$'s, which is quantified through a weak dependence coefficient. The second is the tail probability of the distribution of $\|M_\ell\|$.

\subsubsection{Dependence}
We introduce a coefficient to measure the dependence of the process of operators/matrices, which is essentially from~\citep{rio2000inegalites}. It is also discussed in \citep{dedecker2007weak}.
First, we define the set of Lipschitz functions on $\ell$ operators/matrices for $\ell \in\{ 1,\ldots,n\}$.

\begin{definition}[Lipschitz function on $\ell$ elements]
Let $E$ be a space equipped with the norm $\|\cdot\|_E$. For any $\ell\in\mathbb{N}$ and $L > 0$, we let ${\rm Lip}_{\ell}(E,L)$ denote the set of all functions $h:E^{\ell}\rightarrow \mathbb{R}$ such that for any $(a_1,\dots,a_{\ell},b_1,\dots,b_\ell)\in E^{2\ell}$,
$$ \abs{ h(a_1,\dots,a_{\ell}) - h(b_1,\dots,b_{\ell}) } \leq L \sum_{i=1}^{\ell} \|a_i - b_i\|_{E} . $$
\end{definition}
Owing to this definition, we can introduce our weak dependence condition:
\begin{assumption}[Weak dependence]
\label{asm:weakdep}
{\rc 
For any $\ell \in \{1,...,n-1\}$ and function $g\in {\rm Lip}_{n-\ell}(\mathcal{S},1)$,  $\mathbb{E}[g(M_{\ell+1},\dots,M_n) \mid \mathcal{F}_\ell]$ and $\mathbb{E}[g(M_{\ell+1},\dots,M_n)]$ exist. }
Further, there exist real numbers $(\Gamma_{\ell,n})_{1\leq \ell \leq n-1}$ such that for any $\ell\in\{1,\dots,n-1\}$ and for any function $g\in {\rm Lip}_{n-\ell}(\mathcal{S},1)$, we have
\begin{equation}
\label{eq:asm:weakdep}
\abs{ \mathbb{E}[g(M_{\ell+1},\dots,M_n) \mid \mathcal{F}_\ell] - \mathbb{E}[g(M_{\ell+1},\dots,M_n)] } \leq \Gamma_{\ell,n},
\end{equation}
almost surely. We set $\Gamma_n = \max_{1\leq \ell \leq n-1} \Gamma_{\ell,n}$.
\end{assumption}

This assumption has several noteworthy points:
(i) The coefficient used in the assumption is a generalization of the uniform mixing coefficient for bounded processes; (ii) the coefficient quantifies the dependence of the $M_\ell$'s: the larger it is, the more the matrices are dependent, while $\Gamma_n=0$ for independent matrices; and (iii) it is not comparable with the $\alpha/\beta$-mixing property: examples of non-mixing processes with small $\Gamma_n$ are known. {\rc A classical real-valued example from~\citep{andrews1984non} is given by $
M_{\ell+1} = (M_{\ell}+\varepsilon_{\ell})/2 $ where the $(\varepsilon_\ell)$ are i.i.d. from a Bernoulli distribution with parameter $1/2$. It is proven in Section 1.5 page 8 of~\citep{dedecker2007weak} that this process is not strongly mixing. On the other hand, it is quite direct to check that $\Gamma_{\ell,n} \leq 1 $.}
Essentially, when $\Gamma_n$ remains bounded for large $n$, we recover the same rates of estimation as for independent matrices.
This includes linear auto-regressive moving-average (ARMA) processes and a causal Bernoulli shifts (CBS), that are described below.
For more details, we refer the reader to \citep{rio2000inegalites,dedecker2005new,dedecker2007weak,alquier2012model}. 

In previous studies such as \citep{rio2000inegalites}, Assumption \ref{asm:weakdep} is used for bounded processes; that is, for any $\ell$, $\|M_\ell\|$ is bounded almost surely. As aforementioned, when working with unbounded processes, we begin by studying a truncated and bounded version of the process. A fact that we often use in this paper is that when $\mathbf{M}=(M_1,\dots,M_{n})$ satisfies Assumption \ref{asm:weakdep}, then so does $(f(M_1),\dots,f(M_n))$ where $f:E\rightarrow E$ is an adequate truncation function (that is, $f$ is $1$-Lipschitz).

\begin{proposition}
\label{proposition:lipschitz:dependence}
Assume that  $\mathbf{M}=(M_1,\dots,M_{n})$ satisfies Assumption \ref{asm:weakdep} and that $f:\mathcal{S} \rightarrow \mathcal{S}$ is $1$-Lipschitz. Then $(f(M_1),\dots,f(M_n))$ also satisfies Assumption \ref{asm:weakdep}.
\end{proposition}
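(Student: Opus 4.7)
The plan is to reduce the statement for $(f(M_1),\dots,f(M_n))$ directly to Assumption \ref{asm:weakdep} for the original sequence $\mathbf{M}$ by absorbing $f$ into the test function. Fix $\ell\in\{1,\dots,n\}$ and take an arbitrary $g\in{\rm Lip}_{n-\ell}(\mathcal{S},1)$. I define a new test function
\[
h(a_1,\dots,a_{n-\ell}) := g\bigl(f(a_1),\dots,f(a_{n-\ell})\bigr).
\]
Because $f$ is $1$-Lipschitz on $E$ and $g$ is $1$-Lipschitz in each coordinate (jointly, in the sum-of-distances sense),
\[
|h(a_1,\dots,a_{n-\ell})-h(b_1,\dots,b_{n-\ell})|\le \sum_{i=1}^{n-\ell}\|f(a_i)-f(b_i)\|_E\le \sum_{i=1}^{n-\ell}\|a_i-b_i\|_E,
\]
so $h\in {\rm Lip}_{n-\ell}(\mathcal{S},1)$. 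Applying Assumption~\ref{asm:weakdep} to $h$ and $\mathbf{M}$ yields
\[
\bigl|\mathbb{E}[h(M_{\ell+1},\dots,M_n)\mid\mathcal{F}_\ell]-\mathbb{E}[h(M_{\ell+1},\dots,M_n)]\bigr|\le\Gamma_{\ell,n}\quad\text{a.s.}
\]

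The step that requires a little care is that the natural filtration associated with the new sequence is $\mathcal{F}'_\ell:=\sigma(f(M_1),\dots,f(M_\ell))$, which is typically a strict sub-$\sigma$-algebra of $\mathcal{F}_\ell=\sigma(M_1,\dots,M_\ell)$ because $f$ need not be injective. I handle this by conditioning down via the tower property: since $\mathcal{F}'_\ell\subseteq\mathcal{F}_\ell$,
\[
\mathbb{E}[h(M_{\ell+1},\dots,M_n)\mid\mathcal{F}'_\ell]-\mathbb{E}[h(M_{\ell+1},\dots,M_n)]
=\mathbb{E}\bigl[\,\mathbb{E}[h(M_{\ell+1},\dots,M_n)\mid\mathcal{F}_\ell]-\mathbb{E}[h(M_{\ell+1},\dots,M_n)]\,\bigm|\mathcal{F}'_\ell\bigr].
\]
By the conditional Jensen inequality and the bound above,
\[
\bigl|\mathbb{E}[h(M_{\ell+1},\dots,M_n)\mid\mathcal{F}'_\ell]-\mathbb{E}[h(M_{\ell+1},\dots,M_n)]\bigr|\le \mathbb{E}[\Gamma_{\ell,n}\mid\mathcal{F}'_\ell]=\Gamma_{\ell,n}\quad\text{a.s.}
\]

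Finally, I rewrite everything in terms of the image sequence: $h(M_{\ell+1},\dots,M_n)=g(f(M_{\ell+1}),\dots,f(M_n))$, so the displayed inequality is exactly \eqref{eq:asm:weakdep} for the process $(f(M_1),\dots,f(M_n))$ with the \emph{same} coefficients $\Gamma_{\ell,n}$. Since $g$ was an arbitrary element of ${\rm Lip}_{n-\ell}(\mathcal{S},1)$, the proof is complete. The only non-routine point is the filtration mismatch, which is dispatched cleanly by tower plus Jensen; no additional hypothesis on $f$ beyond the $1$-Lipschitz property is needed, and in particular the coefficients are preserved rather than merely controlled by a Lipschitz constant.
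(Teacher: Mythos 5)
Your argument is correct and is essentially identical to the paper's proof: both absorb $f$ into the test function to get $h=g\circ(f,\dots,f)\in{\rm Lip}_{n-\ell}(\mathcal{S},1)$, apply Assumption~\ref{asm:weakdep} to $\mathbf{M}$, and then resolve the filtration mismatch $\sigma(f(M_1),\dots,f(M_\ell))\subseteq\mathcal{F}_\ell$ via the tower property and conditional Jensen. No gaps; the coefficients $\Gamma_{\ell,n}$ are preserved exactly as in the paper.
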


\subsubsection{Tail Probability of the Random Matrices}

We introduce an assumption on the tail probability of $\|M_\ell\|$ that includes heavy-tailed matrices/operators.
\begin{assumption}[Tail probability]
\label{asm:tailbound}
We define, for any $t\geq 0$, $\Surv_{\ell}(t) = \mathbb{P}(\|M_\ell\|>t)$ the tail function of $\|M_{\ell}\|$. We assume that {\rc $\int_{0}^{\infty}\Surv_{\ell}(t) {\rm d} t<\infty$}, and we define
\begin{align*}
    G(t) = \max_{1\leq \ell\leq n } \int_{t}^{\infty} \Surv_{\ell}(u) {\rm d} u.
\end{align*}
\end{assumption}

\subsection{Examples} \label{sec:example}

We provide examples in which Assumptions~\ref{asm:weakdep} and~\ref{asm:tailbound} are satisfied. A recurring case of interest is the one of a stationary $\mathbb{H}$-valued stochastic process $(Y_\ell)_{\ell\in\mathbb{Z}}$, with $M_\ell = Y_{\ell} Y_{\ell}^\top $. The estimation of $\Sigma=\mathbb{E}[M_{\ell}]$ corresponds to the estimation of the covariance matrix of $(Y_\ell)_{\ell\in\mathbb{Z}}$.

\subsubsection{Independent Matrices}
Before diving into time dependence, we study the simple case where the matrices $M_{\ell}$ are independent and identically distributed. Therefore, Assumption~\ref{asm:weakdep} is trivially satisfied with $\Gamma_n=0$. Moreover, in Assumption~\ref{asm:tailbound}, we have $\Surv_\ell=\Surv_1$ for any $\ell$ and thus $G(t) = \int_{t}^{\infty} \Surv_{1}(u) {\rm d} u $.

We then consider the special case where $M_\ell = Y_{\ell} Y_{\ell}^\top $ and the $Y_\ell$ are i.i.d. Then, $\Surv_{\ell }(t)  = \mathbb{P}( \|M_\ell\| > t ) = \mathbb{P}( \|Y_\ell\|^2 > t ) $ and thus Assumption~\ref{asm:tailbound} can be checked by the study of the tails of $\|Y_\ell\|^2$. We detail three cases of interest.

\textbf{Bounded case:} if $\|Y_\ell\| \leq C$ almost surely, then $\Surv_{\ell }(t) =0 $ for any $t\geq  C^2$, and thus Assumption~\ref{asm:tailbound} is satisfied with $G(t) = 0$ for $t\geq C^2$.

\textbf{Exponential tails:} let us start with a specific example: $Y_\ell\sim \mathcal{N}(0,\Sigma)$ in $\mathbb{R}^p$. Then, by (the proof of) Lemma 1 of~\cite{laurent2000adaptive}, for all $s\in(0,1/{\rc (2||\Sigma||)})$, we have $\log \mathbb{E} \exp( s \|Y_\ell\|^2 ) \leq \frac{s^2 {\rm Tr}(\Sigma^2) }{1-2 s \|\Sigma\| }$, 
and thus it holds that
\begin{align}    
\mathbb{P}(\|Y_\ell\|^2 > t )  \leq {\rc\frac{\Ep[\exp( s \|Y_\ell\|^2 )]}{\exp(st)}} \leq \exp\left( \frac{s^2 {\rm Tr}(\Sigma^2) }{1-2 s\|\Sigma\| } - st \right).
\end{align}
We then put $s = \frac{t}{2 {\rm Tr}(\Sigma^2) + 2 \|\Sigma\| t }$ and obtain:
$$
\Surv_\ell(t) = \mathbb{P}(\|Y_\ell\|^2 > t )  \leq \exp\left( -\frac{ t^2  }{4( {\rm Tr}(\Sigma^2) +  \|\Sigma\| t) } \right). $$
Particularly, we set $t\geq \frac{{\rm Tr}(\Sigma^2)}{\|\Sigma\|} $ and obtain
$$ \Surv_\ell(t) \leq \exp\left( -\frac{ t^2  }{4( {\rm Tr}(\Sigma^2) +  \|\Sigma\| t) } \right) \leq \exp\left( -\frac{ t  }{2 \|\Sigma\|  } \right) $$
and thus $G(t) \leq 2 \|\Sigma\| \exp( -\frac{t}{2\|\Sigma\|} )$ holds.
More generally, we consider examples where $ \Surv_\ell(t) \leq \exp(-at) $ and thus $G(t) \leq \exp(-at)/a$ for some $a>0$ for large enough $t > 0$.

\textbf{Polynomial tails:} we consider a more general situation where $Y_\ell = \sqrt{R_\ell} V_\ell$ where $V_\ell$ is distributed on the unit sphere in $\mathbb{R}^p$, and $R_\ell$ is a non-negative random variable. In this case,
$$ \Surv_\ell(t) =  \mathbb{P}(\|Y_\ell\|^2 > t )  = \mathbb{P}( R_\ell > t )  $$
and thus Assumption~\ref{asm:tailbound} is satisfied if $\mathbb{P}( R_\ell > t ) =o(1/t)$ when $t\rightarrow\infty$, and we have:
$$ G(t) = \int_{t}^{\infty} \mathbb{P}( R_\ell > u )  {\rm d} u . $$
This includes exponential tails as above, where $ \mathbb{P}( R_\ell > u ) \leq \exp(-at)  $ for some $a>0$. This also includes heavier tail probabilities.
For example, if $R_\ell$ is a (shifted) Pareto random variable, $\mathbb{P}( R_\ell > t ) = \frac{1}{(t+1)^a} $, Assumption~\ref{asm:tailbound} is satisfied if $a>1$ and we have $G(t) \leq \frac{a-1}{(t+1)^{a-1}} $.

\subsubsection{Causal Bernoulli Shift}
An important category of examples is the class of causal Bernoulli shifts (CBS), which includes a large class of stochastic processes. We consider a bounded CBS first, and then define a class of unbounded processes built on CBSs.
\begin{example}[Causal Bernoulli shifts, CBS]
\label{exm:cbs}
Let $\Xi = (\xi_\ell)_{\ell\in\mathbb{Z}}$ be a sequence of bounded i.i.d. $\mathbb{H}$-valued random variables: $\|\xi_\ell\| \leq B_\xi $ almost surely. Let $C:{\rc\bigtimes_{i=1}^{\infty}\mathbb{H}} \rightarrow \mathbb{H}$ with $C(0,0,\dots)=0$. Assume that, for any $(a_1,b_1,a_2,b_2,\dots)\in {\rc\bigtimes_{i=1}^{\infty}\mathbb{H}}$ we have
$$ \| C(a_1,a_2,\dots) - C(b_1,b_2,\dots) \| \leq \sum_{\ell=1}^{\infty} \alpha_\ell \|a_\ell-b_\ell \| \text{ and } \mathcal{A} := \sum_{\ell=1}^\infty \alpha_\ell < \infty. $$
Then, we can define the stationary process $(X_\ell)_{\ell\in\mathbb{Z}}$ given by
$$ X_\ell = C(\xi_{\ell},\xi_{\ell-1},\xi_{\ell-2},\dots). $$
This process $(X_\ell)_{\ell\in\mathbb{Z}}$ is called a CBS. Note that $\|X_{\ell}\|\leq B :=  \mathcal{A} B_\xi $ almost surely.
\end{example}

CBSs include many well-known stationary and ergodic processes, such as causal ARMA.
We have the following result:
\begin{proposition}\label{prop:cbs}
Let $(Y_\ell)_{\ell\in\mathbb{Z}}$ be a CBS and {\rc let} $M_\ell =Y_\ell Y_{\ell}^\top $ for any $\ell\in\mathbb{Z}^2$.
Then, $\mathbf{M}=(M_\ell)_{\ell=1,\dots,n}$ satisfies Assumption~\ref{asm:weakdep} with $\Gamma_{\ell,n} = 4 B B_\xi \sum_{i=\ell+1}^\infty \min(i,n) \alpha_i $ and Assumption~\ref{asm:tailbound} with $G(t) =  \mathbf{1}_{\{t\leq 4B^2\}}$.
\end{proposition}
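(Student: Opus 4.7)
The plan is to verify the two parts of the proposition separately. For the tail bound of Assumption~\ref{asm:tailbound}, note that any CBS satisfies $\|Y_\ell\| \leq B = \mathcal{A} B_\xi$ almost surely, so $M_\ell = Y_\ell Y_\ell^\top$ is a rank-one positive semi-definite operator with $\|M_\ell\| = \|Y_\ell\|^2 \leq B^2 \leq 4B^2$. Hence $\mathbb{P}(\|M_\ell\| > t) = 0$ for $t > 4B^2$ and is trivially bounded by $1$ otherwise, which is exactly what $F(t) = \mathbf{1}_{\{t \leq 4B^2\}}$ certifies; the second-moment bound $\mathbb{E}[\|M_\ell\|^2] \leq B^4$ also follows, so the $V$-condition is satisfied as well.

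For Assumption~\ref{asm:weakdep} I would use the standard coupling argument for causal Bernoulli shifts. Let $(\xi'_j)_{j \in \mathbb{Z}}$ be an independent copy of $(\xi_j)_{j \in \mathbb{Z}}$ and set $\xi^*_j = \xi_j$ for $j > \ell$ and $\xi^*_j = \xi'_j$ for $j \leq \ell$; define $Y^*_i := C(\xi^*_i, \xi^*_{i-1}, \ldots)$ and $M^*_i := Y^*_i (Y^*_i)^\top$ for $i > \ell$. By construction the $\xi^*_j$ are i.i.d.~with the same law as $\xi_j$, so $(M^*_i)_{i > \ell}$ has the same joint distribution as $(M_i)_{i > \ell}$; moreover it depends only on $(\xi_j)_{j > \ell}$ and on $(\xi'_j)_{j \leq \ell}$, hence is independent of $\mathcal{F}_\ell \subset \sigma(\xi_j : j \leq \ell)$. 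Consequently $\mathbb{E}[g(M^*_{\ell+1}, \ldots, M^*_n) \mid \mathcal{F}_\ell] = \mathbb{E}[g(M_{\ell+1}, \ldots, M_n)]$, and the $1$-Lipschitz property of $g$ yields
\[
\left|\mathbb{E}[g(M_{\ell+1},\ldots,M_n) \mid \mathcal{F}_\ell] - \mathbb{E}[g(M_{\ell+1},\ldots,M_n)]\right| \leq \sum_{i=\ell+1}^{n} \mathbb{E}\left[\|M_i - M^*_i\| \mid \mathcal{F}_\ell\right].
\]

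To finish, I would combine a rank-one matrix estimate with the Lipschitz property of $C$. First, writing $M_i - M^*_i = Y_i(Y_i - Y^*_i)^\top + (Y_i - Y^*_i)(Y^*_i)^\top$ and using $\|Y_i\|, \|Y^*_i\| \leq B$ gives $\|M_i - M^*_i\| \leq 2B\|Y_i - Y^*_i\|$. Second, the arguments of $C$ for $Y_i$ and $Y^*_i$ agree at positions $1, \ldots, i-\ell$ and differ only at positions $m \geq i-\ell+1$; combined with $\|\xi_j - \xi'_j\| \leq 2B_\xi$, the Lipschitz hypothesis on $C$ yields $\|Y_i - Y^*_i\| \leq 2B_\xi \sum_{m=i-\ell+1}^{\infty} \alpha_m$. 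Plugging these in and summing in $i$ produces an overall bound of the form $4 B B_\xi \sum_{i=\ell+1}^{n} \sum_{m=i-\ell+1}^{\infty} \alpha_m$, and the last step is to swap the order of summation and re-index so as to match the claimed closed form $\Gamma_{\ell,n} = 4 B B_\xi \sum_{i=\ell+1}^{\infty} \min(i,n) \alpha_i$. This reindexing is the main technical wrinkle, since one has to carefully rewrite the counting weights $\min(m-1, n-\ell)$ after shifting indices to align with the claimed sum; the coupling construction, the rank-one matrix estimate, and the Lipschitz step are otherwise routine.
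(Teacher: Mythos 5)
Your strategy is the same as the paper's: the paper proves this proposition as the special case $\varepsilon_\ell \equiv 0$ of Proposition~\ref{prop:assumptions}, via exactly the coupling you describe (replace $\xi_j$ by an independent copy for $j\le\ell$, apply the Lipschitz property of $g$, the rank-one estimate $\|M_i-M_i^*\|\le 2B\|Y_i-Y_i^*\|$, and the Lipschitz property of $C$). Your treatment of the tail bound and of the conditional-expectation step is fine, and in one place more careful than the paper's, which places the operator norm outside the conditional expectation where it should be inside.

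The genuine gap is precisely the step you defer. Your double sum $4BB_\xi\sum_{i=\ell+1}^{n}\sum_{m=i-\ell+1}^{\infty}\alpha_m$ evaluates, after swapping the order of summation, to $4BB_\xi\sum_{m=2}^{\infty}\min(m-1,\,n-\ell)\,\alpha_m$, since the coefficient of $\alpha_m$ is the number of $i\in\{\ell+1,\dots,n\}$ with $i\le m+\ell-1$. This is \emph{not} dominated by the claimed $4BB_\xi\sum_{i=\ell+1}^{\infty}\min(i,n)\,\alpha_i$: take $\alpha_2>0$, all other $\alpha_i=0$, and $\ell\ge 2$; then your (correct) bound equals $4BB_\xi\alpha_2>0$ while the claimed expression is $0$. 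So the reindexing you postpone as a ``technical wrinkle'' cannot actually be carried out. To be fair, this is a defect of the stated closed form rather than of your argument: the paper's own proof reaches $\sum_{m=\ell+1}^{n}\sum_{i=m-\ell}^{\infty}\alpha_i=\sum_{i\ge 1}\min(i,n-\ell)\,\alpha_i$ and then asserts without justification that this is at most $\sum_{i\ge\ell+1}\min(i,n)\,\alpha_i$, which fails for the same reason. What your coupling actually proves is that Assumption~\ref{asm:weakdep} holds with $\Gamma_{\ell,n}=4BB_\xi\sum_{i\ge 2}\min(i-1,n-\ell)\,\alpha_i$; note this is still bounded by the $\Gamma=4BB_\xi\sum_{i\ge 2}i\,\alpha_i$ used downstream, so nothing else in the paper breaks. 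You should state that corrected closed form explicitly rather than leave the final reindexing as if it were routine.
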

In this result, we do not consider heavy tails, because CBSs are bounded processes.
The proof of Proposition \ref{prop:cbs} is included in that of Proposition \ref{prop:assumptions}, which is about a more general class of unbounded processes.

\subsubsection{Application to Unbounded Processes}

\begin{proposition}
\label{prop:assumptions}
We assume that $(X_\ell)_{\ell\in\mathbb{Z}}$ is a CBS. Let $\mathcal{E}=(\varepsilon_\ell)_{\ell\in\mathbb{Z}}$ be a sequence of centered i.i.d. $\mathbb{H}$-valued random variables with $\Surv_\varepsilon(t) := \Pr(\|\varepsilon_\ell\|^2 \geq t)$ {\rc such that $\int_{0}^{\infty}\Surv_{\varepsilon}(t)dt<\infty$ holds}, all independent from $(X_\ell)_{\ell\in\mathbb{Z}}$. 
We define the process $(Y_\ell)_{\ell\in\mathbb{Z}}$ as
\begin{align*}
    Y_{\ell}=X_{\ell}+\varepsilon_\ell,
\end{align*}
and $M_\ell =Y_\ell Y_{\ell}^\top $ for any $\ell\in\mathbb{Z}$.
Then, $\mathbf{M}=(M_\ell)_{\ell=1,\dots,n}$ satisfies Assumption~\ref{asm:weakdep} with $\Gamma_{\ell,n} = 4 B B_\xi \sum_{i=\ell+1}^\infty \min(i,n) \alpha_i $ and Assumption~\ref{asm:tailbound} with $\Surv_{\ell}(t) \leq  \mathbf{1}_{\{t\leq 4B^2\}} + \Surv_\varepsilon(t/4)${\rc[removed some words]}.

Furthermore, if $ \Gamma := 4 B B_\xi \sum_{i=2}^\infty i \alpha_i < \infty$, then $\max_{1 \leq \ell \leq n} \Gamma_{\ell,n} \leq \Gamma$ holds.
\end{proposition}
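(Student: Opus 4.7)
The statement combines three claims: the weak-dependence bound (Assumption~\ref{asm:weakdep}), the tail bound (Assumption~\ref{asm:tailbound}), and the uniform estimate $\max_\ell \Gamma_{\ell,n} \leq \Gamma$. My plan is to dispatch the tail and uniform bounds quickly and focus on the dependence claim, which I would prove by a coupling argument adapted to the CBS structure.

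For Assumption~\ref{asm:tailbound}, I would use $\|M_\ell\| = \|Y_\ell\|^2 \leq 2\|X_\ell\|^2 + 2\|\varepsilon_\ell\|^2 \leq 2B^2 + 2\|\varepsilon_\ell\|^2$. When $t \leq 4B^2$ the bound $\Pr(\|M_\ell\|>t) \leq 1$ is absorbed into $\mathbf{1}_{\{t \leq 4B^2\}}$; when $t > 4B^2$, the event $\{\|M_\ell\|>t\}$ forces $\|\varepsilon_\ell\|^2 > (t-2B^2)/2 \geq t/4$ (since $t/2 - B^2 \geq t/2 - t/4 = t/4$ whenever $t \geq 4B^2$), giving $\Pr(\|M_\ell\|>t) \leq F_\varepsilon(t/4)$. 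Combining the two cases yields $F(t) = \mathbf{1}_{\{t \leq 4B^2\}} + F_\varepsilon(t/4)$. The uniform bound then follows immediately from $\sum_{i=\ell+1}^\infty \min(i,n)\alpha_i \leq \sum_{i=2}^\infty i\,\alpha_i$ valid for all $\ell \geq 1$.

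For Assumption~\ref{asm:weakdep}, I would couple the past innovations. Let $(\xi'_j)_{j \in \Z}$ be an independent copy of $(\xi_j)$, also independent of $(\varepsilon_j)$, and for $k > \ell$ define
\begin{equation*}
X_k^{(\ell)} = C(\xi_k, \xi_{k-1}, \dots, \xi_{\ell+1}, \xi'_\ell, \xi'_{\ell-1}, \dots), \qquad M_k^{(\ell)} = (X_k^{(\ell)} + \varepsilon_k)(X_k^{(\ell)} + \varepsilon_k)^\top .
\end{equation*}
Two observations drive the argument. First, the vector $(M^{(\ell)}_{\ell+1}, \dots, M^{(\ell)}_n)$ is measurable with respect to $(\xi_k)_{k > \ell}$, $(\xi'_j)_{j \leq \ell}$ and $(\varepsilon_k)_{k > \ell}$, each of which is independent of $\mathcal{F}_\ell$, so the whole vector is independent of $\mathcal{F}_\ell$. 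Second, because $(\xi_j)$ is i.i.d., exchanging the blocks $(\xi_j)_{j \leq \ell}$ and $(\xi'_j)_{j \leq \ell}$ preserves joint laws, so $(M^{(\ell)}_{\ell+1}, \dots, M^{(\ell)}_n)$ has the same distribution as $(M_{\ell+1}, \dots, M_n)$. Consequently $\Ep[g(M_{\ell+1}, \dots, M_n)] = \Ep[g(M^{(\ell)}_{\ell+1}, \dots, M^{(\ell)}_n) \mid \mathcal{F}_\ell]$, and the $1$-Lipschitz property of $g$ reduces the left-hand side of~\eqref{eq:asm:weakdep} to $\Ep\bigl[\sum_{k=\ell+1}^n \|M_k - M_k^{(\ell)}\| \,\big|\, \mathcal{F}_\ell\bigr]$. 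Each summand I would control by the rank-one identity $M_k - M_k^{(\ell)} = (X_k - X_k^{(\ell)}) Y_k^\top + Y_k^{(\ell)}(X_k - X_k^{(\ell)})^\top$ (the $\varepsilon_k$ contributions cancel from $Y_k - Y_k^{(\ell)}$), giving $\|M_k - M_k^{(\ell)}\| \leq \|X_k - X_k^{(\ell)}\|(\|Y_k\| + \|Y_k^{(\ell)}\|)$, while the Lipschitz hypothesis on $C$ furnishes the deterministic bound $\|X_k - X_k^{(\ell)}\| \leq 2 B_\xi \sum_{i \geq k-\ell+1} \alpha_i$, since only the arguments of index $\geq k-\ell+1$ differ. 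Swapping the order of summation in $\sum_{k=\ell+1}^n \sum_{i \geq k-\ell+1} \alpha_i$ rearranges the result into the $\min(\cdot, n)\alpha_i$-form appearing in $\Gamma_{\ell,n}$.

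The main obstacle is controlling the unbounded factor $\|Y_k\| + \|Y_k^{(\ell)}\|$: the naive bound $\|Y_k\| \leq B + \|\varepsilon_k\|$ would introduce an $\Ep[\|\varepsilon_k\|]$-dependent prefactor instead of the clean $4 B B_\xi$ stated. The cleanest way to close this is to first establish the coefficient for the bounded surrogate $(X_k X_k^\top)_k$---where the inequality collapses to $\|X_k X_k^\top - X_k^{(\ell)}(X_k^{(\ell)})^\top\| \leq 2B \|X_k - X_k^{(\ell)}\|$ and the calculation produces Proposition~\ref{prop:cbs} exactly---and then transfer to $(Y_k Y_k^\top)_k$ by integrating out the $\mathcal{F}_\ell$-independent i.i.d.\ innovations $(\varepsilon_k)_{k>\ell}$ first, which does not alter the dependence coefficient of the CBS part. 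Making this reduction precise is the key technical step; the remaining index rearrangement that turns the double sum into the announced $\sum_{i \geq \ell+1} \min(i, n)\alpha_i$ expression is routine.
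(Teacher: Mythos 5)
Your treatment of Assumption~\ref{asm:tailbound} and of the uniform bound $\max_{1\le\ell\le n}\Gamma_{\ell,n}\le\Gamma$ is correct and essentially the paper's (the paper splits $\Pr(\|X_\ell+\varepsilon_\ell\|\ge\sqrt{t})$ by a union bound at $\sqrt{t}/2$ instead of squaring first, but both land on $\mathbf{1}_{\{t\le 4B^2\}}+F_\varepsilon(t/4)$). Your coupling for Assumption~\ref{asm:weakdep} is also the same device the paper uses: replace $(\xi_j)_{j\le\ell}$ by an independent copy, note that the coupled block is independent of $\mathcal{F}_\ell$ and equidistributed with the original, condition on the richer innovation $\sigma$-algebra, and invoke the Lipschitz property of $g$.

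The gap is the one you name yourself, and your proposed repair does not close it. After the Lipschitz step you hold $\Ep\bigl[\sum_k\|M_k-M_k^{(\ell)}\|\,\big|\,\mathcal{F}_\ell\bigr]$, and the factorization $\|M_k-M_k^{(\ell)}\|\le\|X_k-X_k^{(\ell)}\|(\|Y_k\|+\|Y_k^{(\ell)}\|)$ genuinely carries an $\Ep\|\varepsilon_k\|$ prefactor. Your fix --- integrate out $(\varepsilon_k)_{k>\ell}$ first and reduce to the bounded surrogate $(X_kX_k^\top)_k$ of Proposition~\ref{prop:cbs} --- fails because the reduction produces $\tilde g(x_{\ell+1},\dots,x_n)=\Ep_\varepsilon\bigl[g((x_{\ell+1}+\varepsilon_{\ell+1})(x_{\ell+1}+\varepsilon_{\ell+1})^\top,\dots)\bigr]$, whose Lipschitz constant in the $x$'s is still of order $B+\Ep\|\varepsilon\|$: the cross term $(x_k-x_k')\varepsilon_k^\top$ sits inside the nonlinear $g$, so its centering is not usable and it does not cancel upon averaging. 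The paper removes the $\varepsilon$-moment by a different move: it bounds the conditional Lipschitz difference by $\sum_m\bigl\|\Ep[\bar M_m-M_m\mid\mathcal{G}_\ell]\bigr\|$, i.e.\ with the conditional expectation \emph{inside} the operator norm (where $\mathcal{G}_\ell$ is generated by all innovations up to time $\ell$). Once the expectation is inside the norm the problem is linear, the cross terms $\Ep[(\bar X_m-X_m)\varepsilon_m^\top\mid\mathcal{G}_\ell]$ vanish because $\varepsilon_m$ is centered and independent of everything else, and only $\|\Ep[\bar X_m\bar X_m^\top-X_mX_m^\top\mid\mathcal{G}_\ell]\|\le 2B\sup\|\bar X_m-X_m\|$ survives, yielding the clean $4BB_\xi$ constant. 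That interchange is the crux you are missing --- and note it is not a consequence of the Lipschitz property alone, since Jensen gives $\|\Ep[\cdot]\|\le\Ep[\|\cdot\|]$ in the unhelpful direction --- whereas the final index rearrangement into $\sum_i\min(i,n)\alpha_i$ is, as you say, routine.
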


\subsubsection{Application to Chains with Infinite Memory}

We finally discuss chains with infinite memory, which turn out to be special cases of CBSs.
\begin{example}[Chain with Infinite Memory, CIM]
Let $\Xi = (\xi_\ell)_{\ell\in\mathbb{Z}}$ be a sequence of bounded i.i.d. $\mathbb{H}$-valued random variables: $\|\xi_\ell\| \leq B_\xi $ almost surely. Let $D:{\rc\bigtimes_{i=1}^{\infty}\mathbb{H}} \rightarrow \mathbb{H}$ with $D(0,0,\dots)=0$. Assume that, for any $(a_0,b_0,a_1,b_1,a_2,b_2,\dots)\in \mathbb{H}^{\infty}$ we have
$$ \| D(a_0,a_1,a_2,\dots) - D(b_0,b_1,b_2,\dots) \| \leq   \sum_{\ell=0}^{\infty} \beta_\ell \|a_\ell-b_\ell \| \text{ and } \mathcal{B} := \sum_{\ell=1}^{\infty} \beta_\ell < 1. $$
Then, there is a stationary solution $(X_\ell)_{\ell\in\mathbb{Z}}$ to the equation \citep{doukhan2008weakly}:
$$ X_\ell = D(\xi_{\ell},X_{\ell-1},X_{\ell-2},X_{\ell-3},\dots). $$
The process $(X_\ell)_{\ell\in\mathbb{Z}}$ is called a chain with infinite memory (CIM). 
\end{example}

There is a simple connection between CBSs and CIMs.
Using Proposition 4.1 of \citep{alquier2012model}, a CIM can be rewritten as a CBS as
$$ X_\ell = C(\xi_{\ell},\xi_{\ell-1},\xi_{\ell-2},\dots) \text{ with } \alpha_\ell = \beta_0 \mathcal{B}^{\ell-1}.$$

\begin{remark}
Let us briefly discuss vector auto-regression (VAR) in this framework: $X_{\ell} = A X_{\ell - 1} + \xi_\ell$, with $A \in \mathbb{R}^{p} \otimes \mathbb{R}^{p}$.
A VAR with bounded noise terms $\xi_\ell$ is obviously a CIM, and thus Assumptions \ref{asm:weakdep} and \ref{asm:tailbound} are satisfied by such a process. They even remain satisfied by $Y_\ell = X_\ell +\varepsilon_\ell  $ for heavy-tailed $\varepsilon_\ell$'s, by using Proposition \ref{prop:assumptions}. 
However, when the noise $\xi_\ell$ in the VAR is unbouded, we need to apply another technique to handle it.
Therefore, we have to approximate the VAR by a finite-order moving-average (MA) process and show it satisfies Assumptions \ref{asm:weakdep} and \ref{asm:tailbound}.
As the approximation error vanishes when the order $k$ of the MA grows, we can apply our result without difficulty.
\end{remark}

\section{Main Results} \label{sec:result}

We introduce our main result in stages. 
First, we consider the case where $M_\ell$ is a $p \times p$ matrix with the bounded property, and then we extend it to the unbounded and heavy-tailed cases. 
Finally, we extend the result to the case where $M_\ell$ is an operator between infinite-dimensional spaces.

\subsection{Result on $p$-Dimensional Matrix}

\subsubsection{Bounded Case}

We first consider the case $\mathbb{H} = \mathbb{R}^p$, with $p \in \mathbb{N}$, and the matrices $\|M_\ell\|$ are bounded for all $ \ell = 1,\ldots,n$.
Obviously, $M_\ell$ is not heavy-tailed in this case; thus, the main contribution here is to handle the dependence in $\mathbf{M}$.

The derivation of this result starts with the variational inequality: with a probability measure $\mu$ on a parameter space $\Theta$,
and with a random parameter $\theta$ in $\Theta$ and a random variable $X$, 
it holds that with probability at least $1-\exp(-t)$, for any probability measure $\rho\ll \mu$ and any measurable function $h$,
\begin{align*}
    \Ep_{\rho}[h\left(X,\theta\right)]\le \Ep_{\rho}\left[\log\Ep_{X}\left[\exp\left(h\left(X,\theta\right)\right)\right]\right]+ \mathrm{KL}\left(\rho \| \mu\right)+t
\end{align*}
where $ \Ep_{\rho}$ denotes the expectation with respect to $\theta$ under the distribution $\rho${\rc; note that the left-hand side is still random}.
This result is taken from \citep{catoni2017dimension} and \citep{zhivotovskiy2021dimension}.
In our setting, $X=\mathbf{M}$ and we control $\Ep_{X}\left[\exp\left(h\left(X,\theta\right)\right)\right]$ thanks to an inequality by \citep{rio2000inegalites} for dependent matrices (these steps are detailed in the proofs below). We obtain 
\begin{align*}
    \mathbb{E} \left[ \exp\left( \lambda h(\mathbf{M})- \lambda \mathbb{E}[h(\mathbf{M})] \right) \right]
    \leq \exp\left(\frac{\lambda^2 L^2 \sum_{\ell=1}^n \left( 2\kappa + \Gamma_{\ell,n} \right)^2 }{8 n^2}\right).  
\end{align*}
An adequate choice of $h$ leads to our main result: the concentration bound for the estimation of $\Sigma$ using the empirical mean of $\mathbf{M}$.
\begin{theorem} \label{thm:main}
Assume that $\mathbf{M}$ is a sequence of positive semi-definite symmetric random $p \times p$ matrices such that, for some $\kappa>0$, for all $\ell=1,\ldots,n$, $\Ep\left[M_{\ell}\right]=\Sigma$ and $\left\|M_{\ell}\right\|\le \kappa^{2}$ almost surely.
Under Assumption~\ref{asm:weakdep}, for all $t>0$, with probability at least $1-\exp(-t)$, we have
\begin{align*}
    \left\|\frac{1}{n}\sum_{\ell=1}^{n} M_{\ell}-\Sigma\right\|\le{4}\sqrt{2}\left\|\Sigma\right\|\left({ \kappa^{2}+\Gamma_{n}}\right)\sqrt{\frac{4\mathbf{r}\left(\Sigma\right)+t}{n}}.
\end{align*}
\end{theorem}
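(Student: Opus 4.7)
The plan is to combine the PAC-Bayes variational inequality sketched in the introduction (following \cite{catoni2017dimension,zhivotovskiy2021dimension}) with the Rio-type moment-generating-function bound for weakly dependent processes, and to choose the reference measure $\mu$ so that the final bound is controlled by $\mathbf{r}(\Sigma)$ rather than $p$. Setting $A:=\frac{1}{n}\sum_{\ell=1}^n M_\ell - \Sigma$, I would first observe that by symmetry $\|A\|=\sup_{\|u\|=1}|\langle u,Au\rangle|$, so up to replacing $A$ by $-A$ it suffices to bound $\sup_{\|u\|=1}\langle u,Au\rangle$. Rather than using an $\varepsilon$-net on the sphere (which would cost a factor of $p$), I would introduce a Gaussian prior $\mu$ on $\R^p$ and a family of Gaussian posteriors $\rho_u$ indexed by unit vectors $u$, both adapted to $\Sigma$, so that $\mathbb{E}_{\theta\sim\rho_u}\langle\theta,A\theta\rangle$ isolates $\langle u,Au\rangle$ up to a trace correction, while $\mathrm{KL}(\rho_u\|\mu)$ depends only on the effective rank.

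For fixed $\theta$ the map $h_\theta(M_1,\dots,M_n):=\langle\theta,A\theta\rangle$ is $(\|\theta\|^2/n)$-Lipschitz in each $M_\ell$ with respect to the operator norm. Using $\|M_\ell\|\le\kappa^2$ together with the weak-dependence coefficients $\Gamma_{\ell,n}$ of Assumption~\ref{asm:weakdep}, the Rio-type MGF inequality yields, for every $s>0$,
$$\mathbb{E}\bigl[\exp(sh_\theta-s\mathbb{E} h_\theta)\bigr]\le \exp\!\left(\frac{s^2\|\theta\|^4(2\kappa^2+\Gamma_n)^2}{8n}\right),$$
and since $\mathbb{E}[M_\ell]=\Sigma$ gives $\mathbb{E} h_\theta=0$, the same bound controls the log-MGF itself. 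Integrating against $\rho$ and invoking the variational inequality recalled in the introduction, one obtains, with probability $1-e^{-t}$ and for all $\rho\ll\mu$,
$$\mathbb{E}_\rho\langle\theta,A\theta\rangle \le \frac{s(2\kappa^2+\Gamma_n)^2 \mathbb{E}_\rho\|\theta\|^4}{8n}+\frac{\mathrm{KL}(\rho\|\mu)+t}{s}.$$
Specializing to $\rho_u$, the Gaussian quadratic-form identity rewrites the left hand side as $r^2\langle u,Au\rangle$ plus a trace correction; computing $\mathbb{E}_{\rho_u}\|\theta\|^4$ and $\mathrm{KL}(\rho_u\|\mu)$ in terms of $r$, $\|\Sigma\|$ and $\mathrm{Tr}(\Sigma)=\|\Sigma\|\mathbf{r}(\Sigma)$, then optimizing in $s$ and $r$ and finally taking the supremum over $u$, should produce the target bound $2\sqrt{2}\,\|\Sigma\|(2\kappa^2+\Gamma_n)\sqrt{(4\mathbf{r}(\Sigma)+t)/n}$.

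The hard part is the calibration of $(\mu,\rho_u)$: we need the posterior family to recover the full operator norm on the left hand side while keeping \emph{both} the fourth moment $\mathbb{E}_\rho\|\theta\|^4$ and $\mathrm{KL}(\rho\|\mu)$ dimension-free, with only $\mathrm{Tr}(\Sigma)/\|\Sigma\|$ appearing. A Gaussian prior whose covariance is proportional to $\Sigma$ (regularized if $\Sigma$ is singular) is the natural candidate, because it routes any dimension-dependent trace term through $\mathrm{Tr}(\Sigma)$ and lets it be absorbed into the $\|\Sigma\|\sqrt{\mathbf{r}(\Sigma)/n}$ scale of the target bound. Getting the specific numerical constants $2\sqrt{2}$ and the exact factor $4\mathbf{r}(\Sigma)+t$ requires careful bookkeeping of this balance, but no additional ideas beyond those already highlighted by the authors.
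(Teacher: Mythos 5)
Your overall strategy --- Rio's moment-generating-function inequality for the weakly dependent sequence combined with the PAC-Bayes variational inequality and a Gaussian-type reference measure whose covariance is proportional to $\Sigma/\mathbf{r}(\Sigma)$ --- is exactly the skeleton of the paper's proof, and your Lipschitz computation and the final optimization in $s$ are carried out the same way there. The genuine divergence is in the choice of the posterior family, and that is where your argument has a hole. You test $A=\frac{1}{n}\sum_{\ell}M_\ell-\Sigma$ against the quadratic form $\langle\theta,A\theta\rangle$ with a single non-degenerate Gaussian $\rho_u$, so the left-hand side of the PAC-Bayes inequality is $r^2\langle u,Au\rangle+\mathrm{Tr}(A\,\mathrm{Cov}(\rho_u))$. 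That trace correction is not a harmless constant: it is a random, signed functional of the very error matrix you are trying to bound, and it is of the same order as the quantity of interest (with $\mathrm{Cov}(\rho_u)\propto\Sigma/\mathbf{r}(\Sigma)$ one only gets $|\mathrm{Tr}(A\Sigma)|/\mathbf{r}(\Sigma)\le\|\Sigma\|\,\|A\|$, which is useless as a self-bound). Your proposal says the identity ``rewrites the left hand side as $r^2\langle u,Au\rangle$ plus a trace correction'' and then optimizes, but never says how that correction is removed. It can be patched --- for instance, apply the same high-probability inequality with $\rho=\mu$ (zero KL) to both $A$ and $-A$ to control $|\mathrm{Tr}(A\,\mathrm{Cov}(\rho_u))|$, then combine --- but that is a genuinely missing step, it costs a further constant factor and an additional event in the union bound, and the advertised constant $2\sqrt{2}$ would not survive the bookkeeping as written.

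The paper sidesteps the issue entirely by decoupling: it uses the bilinear form $\theta^{\top}A\eta$ with two \emph{independent} random vectors $\theta,\eta$ whose laws $f_u,f_v$ are Gaussians centered at $u$ and $v$ and truncated symmetrically to balls of radius $\sqrt{\|\Sigma\|}$. Independence plus exact centering give $\mathbb{E}_{\rho_{u,v}}[\theta^{\top}A\eta]=u^{\top}Av$ with no correction term, and the truncation makes $\|\theta\|,\|\eta\|\le 2\sqrt{\|\Sigma\|}$ hold deterministically, so the Lipschitz constant fed into Rio's inequality and the resulting variance proxy are bounded pointwise rather than only in expectation (your untruncated Gaussian forces you to carry $\mathbb{E}_{\rho}\|\theta\|^{4}$ through, which is workable but adds further bookkeeping). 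If you want to keep the single-vector quadratic form you must add the $\rho=\mu$ step (or an equivalent device) to kill the trace term and then redo the constants; otherwise the cleaner route is the decoupled, truncated construction of the paper.
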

Let us comment briefly on this result.
First, this is a dimension-free bound in which $p$ does not appear. The statistical dimension is instead described by the effective rank $\mathbf{r}\left(\Sigma\right)$. 
This is identical to the statistical dimension of the independent case of \citep{koltchinskii2017concentration} and others.
Then, the effect of this dependence appears as $\Gamma_{n}$ in the factor $(2\kappa^{2}+\Gamma_{n})$ of the upper bound. 
If $\mathbf{M}$ is independent, we have $\Gamma_{n} = 0$. More generally, we described above a large class of processes where  $\Gamma_n$ is {\rc bounded from above by} a constant $\Gamma$. In both cases, our upper bound matches the one of \citep{koltchinskii2017concentration} up to constants.

\subsubsection{Heavy-Tailed Case}

We then extend Theorem~\ref{thm:main} to unbounded, possibly heavy-tailed matrices $M_\ell$.

The general idea is to apply Theorem~\ref{thm:main} to a sequence of transformed matrices $\{f(M_1),\ldots,f(M_n)\}$ where $f:\mathcal{S}\rightarrow\mathcal{S}$ is a bounded function, such that  $\sup_{ M \in \mS}\|f(M)\|\leq \tau$.
This application yields a bound on $\| \frac{1}{n}\sum_{\ell=1}^n f(M_\ell) - \mathbb{E}[f(M_\ell)] \|$. Then, we handle the effect of $f$, that is, $\| \frac{1}{n}\sum_{\ell=1}^n f(M_\ell) - \frac{1}{n}\sum_{\ell=1}^n M_\ell\|$ and $\| \mathbb{E}[f(M_\ell)] - \Sigma \|$, to obtain an upper bound on $\|\frac{1}{n}\sum_{\ell=1}^{n} M_{\ell}-\Sigma\|$.
This results in the introduction of an additional term depending on $\tau$ in the upper bound. 
This technique leads to the following results.
\begin{corollary} \label{cor:standard}
Assume that $\mathbf{M}$ is a sequence of $p \times p$ {\rc positive semi-definite}, symmetric, random matrices with $\Ep\left[M_{\ell}\right]=\Sigma$, which satisfies Assumptions~\ref{asm:weakdep} and \ref{asm:tailbound}. 
For any $\tau>0$ and for all $t>0$, with probability at least $1-\exp(-t)-\sum_{\ell=1}^n \Surv_\ell(\tau)$ it holds that
\begin{align*}
    \left\|\frac{1}{n}\sum_{\ell=1}^{n} M_{\ell}-\Sigma\right\|\le{4}\sqrt{2}\left\|\Sigma\right\|\left({\tau+\Gamma_{n}}\right)\sqrt{\frac{4\mathbf{r}\left(\Sigma\right)+t}{n}} + G(\tau).
\end{align*}
\end{corollary}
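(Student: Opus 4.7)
The plan is to apply Theorem~\ref{thm:main} to a bounded surrogate of $\mathbf{M}$ obtained by eigenvalue truncation, and then control the two resulting truncation errors. Fix the threshold $\tau>0$ from the statement and define the spectral truncation $f:\mS\to\mS$ by
$$
f(M) \;=\; \sum_{i} \min\!\bigl(\lambda_{i}(M),\tau\bigr)\, u_{i}(M)u_{i}(M)^{\top},
$$
where $M=\sum_{i}\lambda_{i}(M)u_{i}(M)u_{i}(M)^{\top}$ is a spectral decomposition of $M$. The map $f$ preserves positive semidefiniteness, satisfies $\|f(M)\|\le\tau$ for all $M\in\mS$, and is $1$-Lipschitz in operator norm because it is the spectral calculus of the $1$-Lipschitz monotone scalar function $x\mapsto\min(x,\tau)$ on symmetric operators. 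By Proposition~\ref{proposition:lipschitz:dependence} the truncated sequence $(f(M_{\ell}))_{\ell=1}^{n}$ inherits Assumption~\ref{asm:weakdep} with the same coefficients $\Gamma_{\ell,n}$.

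Next I would apply Theorem~\ref{thm:main} to $(f(M_{\ell}))$ with $\kappa^{2}=\tau$ and $\Sigma_{f}:=\Ep[f(M_{\ell})]$, which gives with probability at least $1-e^{-t}$
$$
\left\|\frac{1}{n}\sum_{\ell=1}^{n} f(M_{\ell})-\Sigma_{f}\right\|
\le 2\sqrt{2}\,\|\Sigma_{f}\|\bigl(2\tau+\Gamma_{n}\bigr)\sqrt{\frac{4\mathbf{r}(\Sigma_{f})+t}{n}}.
$$
Since $0\preceq f(M_{\ell})\preceq M_{\ell}$ almost surely, taking expectations yields $0\preceq\Sigma_{f}\preceq\Sigma$, so $\|\Sigma_{f}\|\le\|\Sigma\|$ and $\mathrm{Tr}(\Sigma_{f})\le\mathrm{Tr}(\Sigma)$. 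Consequently
$$
\|\Sigma_{f}\|^{2}\bigl(4\mathbf{r}(\Sigma_{f})+t\bigr)
= 4\|\Sigma_{f}\|\,\mathrm{Tr}(\Sigma_{f})+t\|\Sigma_{f}\|^{2}
\le \|\Sigma\|^{2}\bigl(4\mathbf{r}(\Sigma)+t\bigr),
$$
so the right-hand side above is dominated by $2\sqrt{2}\,\|\Sigma\|(2\tau+\Gamma_{n})\sqrt{(4\mathbf{r}(\Sigma)+t)/n}$, yielding the first term of the target bound.

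It remains to handle the two truncation errors. For the bias, $M_{\ell}-f(M_{\ell})$ vanishes on $\{\|M_{\ell}\|\le\tau\}$ and satisfies $0\preceq M_{\ell}-f(M_{\ell})\preceq M_{\ell}$, so Cauchy--Schwarz combined with Assumption~\ref{asm:tailbound} gives
$$
\|\Sigma-\Sigma_{f}\|
\le \Ep\bigl[\|M_{\ell}\|\,\mathbf{1}_{\{\|M_{\ell}\|>\tau\}}\bigr]
\le \sqrt{\Ep[\|M_{\ell}\|^{2}]\,\Pr(\|M_{\ell}\|>\tau)}
\le V\sqrt{F(\tau)}.
$$
For the realized discrepancy, a union bound gives $\Pr(\exists\,\ell:\|M_{\ell}\|>\tau)\le nF(\tau)$; on the complementary event every $M_{\ell}=f(M_{\ell})$, so $\|n^{-1}\sum_{\ell}(M_{\ell}-f(M_{\ell}))\|=0$. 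Adding the three contributions by the triangle inequality and taking a union bound over the two favourable events (of failure probability $e^{-t}$ and $nF(\tau)$) delivers the stated inequality with probability at least $1-e^{-t}-nF(\tau)$.

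The main obstacle is the clean justification of the two structural properties of $f$: operator-norm $1$-Lipschitzness, which is what routes Assumption~\ref{asm:weakdep} through Proposition~\ref{proposition:lipschitz:dependence} without changing $\Gamma_{\ell,n}$, and the Löwner comparison $f(M)\preceq M$, which enables the reduction from $\|\Sigma_{f}\|,\mathbf{r}(\Sigma_{f})$ to $\|\Sigma\|,\mathbf{r}(\Sigma)$ in the leading term. Both follow from standard spectral-calculus facts for symmetric operators with a monotone Lipschitz scalar function, so no genuinely hard work is needed beyond composing them with the bounded-case Theorem~\ref{thm:main}.
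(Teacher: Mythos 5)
Your proposal is correct and follows essentially the same route as the paper: truncate the eigenvalues at $\tau$, transfer Assumption~\ref{asm:weakdep} to the truncated sequence via the $1$-Lipschitz property and Proposition~\ref{proposition:lipschitz:dependence}, apply Theorem~\ref{thm:main} with $\kappa^{2}=\tau$, bound the bias $\|\Ep[\psi_\tau(M_\ell)]-\Sigma\|\le V\sqrt{F(\tau)}$ by Cauchy--Schwarz, and absorb the realized truncation error into the extra failure probability $nF(\tau)$ by a union bound. The only (welcome) difference is that you state the intermediate bound in terms of $\Sigma_f=\Ep[f(M_\ell)]$ and then pass to $\|\Sigma\|$ and $\mathbf{r}(\Sigma)$ via the L\"owner comparison $f(M)\preceq M$, a step the paper leaves implicit when it writes the truncated-matrix bound directly in terms of $\Sigma$.
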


First, note that the bound holds with probability $1-\exp(-t)-\sum_{\ell=1}^n \Surv_\ell(\tau)$. If $\tau$ is constant and $\Surv_\ell(\tau)>0$, then $\sum_{\ell=1}^n \Surv_\ell(\tau)$ can grow to $\infty$ when $n\rightarrow\infty$, and the statement becomes vacuous for large $n$. However, by letting $\tau=\tau_n \rightarrow\infty$, and if the $\Surv_\ell$'s decrease fast enough, we are able to keep $\sum_{\ell=1}^n \Surv_\ell(\tau) $ small enough (for example, smaller than $1/n$).

The effect of the {\rc heavy-tailed} $G(\tau)$ also appears additively in the second term of the derived upper bound. Here again, by letting $\tau=\tau_n \rightarrow\infty$, we can make the term $G(\tau)$ small enough.
The tightness of the bound, of course, depends on how far we are from the boundedness assumption, that is, on how fast the function $G$ decreases.
We provide the following examples:

\textbf{Bounded Case}: Assume that $\|M_i\|\leq \kappa$ almost surely for some $\kappa > 0$, then Assumption~\ref{asm:tailbound} is satisfied with $G(\tau) =0 $ for $\tau\geq \kappa$. 
Thus, we can take $\tau=\kappa$ and recover exactly Theorem~\ref{thm:main}.

\textbf{Exponential-Tail Case}: Assume that $\Surv_\ell(\cdot)$ has an exponential decay, that is, there is an $a>0$ such that for any $\ell$, $\Surv_\ell(t) {\rc\le} \exp(- a t) $. Notably, $G(t) \leq  \exp(- a t)/a $.
Thus, Corollary \ref{cor:standard} states that with probability at least $1-\exp(-t)-n  \exp(-a \tau) $,
    \begin{align*}
    \left\|\frac{1}{n}\sum_{\ell=1}^{n} M_{\ell}-\Sigma\right\|\le  4\sqrt{2}\left\|\Sigma\right\|\left( \tau+\Gamma_{n}\right)\sqrt{\frac{4\mathbf{r}\left(\Sigma\right)+t}{n}} + \frac{\exp\left( - a \tau \right)}{a}.
\end{align*}
For some $\alpha>1$, we put $\tau = \tau_n = \frac{\alpha \log n}{a}$ which implies that $n \sum_{\ell=1}^n \Surv_\ell(\tau_n) \leq \frac{1}{n^{\alpha-1}} $, and we set $t = \log ( \delta^{-1})$.
Subsequently, for every $\delta \in (0,1)$, with probability at least $1-\delta-\frac{1}{n^{\alpha-1}}$, we have
    \begin{align*}
        \left\|\frac{1}{n}\sum_{\ell=1}^{n} M_{\ell}-\Sigma\right\|
    &\le 4\sqrt{2}\left\|\Sigma\right\|\left( \frac{\alpha \log n }{a}+\Gamma_{n}\right)\sqrt{\frac{4\mathbf{r}\left(\Sigma\right)+\log (\delta^{-1})}{n}}  + \frac{1}{a n^{\alpha}} .
\end{align*}
In this upper bound, the effect of the heavy-tail appears in the second term in $1/(an^\alpha)$, which is negligible {\rc with respect to} the first term (because $\alpha$ is chosen $>1$). The main difference with the bounded case
 is the factor $( \frac{\alpha \log(n)}{a} + \Gamma_{n})$ in the first term, which increases in $\log(n)$. Thus, the dependence in $n$ and in the statistical dimension are similar to the ones in \citep{koltchinskii2017concentration,zhivotovskiy2021dimension} up to an additional $\log(n)$ factor.


\textbf{Polynomial-Tail Case}: 
Assume that {\rc$\Surv_\ell$} has a polynomial decay $\Surv_\ell(t) {\rc\le}  a t^{-b} $ with $a > 0$ and $b>2$. Then, $G(t) \leq \frac{a}{b-1} t^{1-b} $. Thus, with $\tau = \tau_n$, the bound is, with probability at least $1-\exp(-t)-n a \tau_n^{-b}$,
\begin{align*}
    \left\|\frac{1}{n}\sum_{\ell=1}^{n} M_{\ell}-\Sigma\right\|\le 4\sqrt{2}\left\|\Sigma\right\|\left( \tau_n+\Gamma_{n}\right)\sqrt{\frac{4\mathbf{r}\left(\Sigma\right)+t}{n}} + \frac{a}{b-1} \tau_n^{1-b}.
\end{align*}
Here, for some $\alpha>1$, we take $ \tau_n = a^{1/b} (n)^{\alpha /b} $, and also set $t = \log \delta^{-1}$.
Then, for any $\delta \in (0,1)$, we obtain that with probability at least $1-\delta-1/n^{\alpha-1} $,
\begin{align*}
    &\left\|\frac{1}{n}\sum_{\ell=1}^{n} M_{\ell}-\Sigma\right\|\\
    &\le 4\sqrt{2}\left\|\Sigma\right\|\left(a^{1/b}n^{\alpha/b} +\Gamma_{n}\right)\sqrt{\frac{4\mathbf{r}\left(\Sigma\right)+ \log(2\delta^{-1})}{n}} + \frac{1}{(b-1) n^{\alpha-1}}  .
\end{align*}
The rate in the first term is more seriously deteriorated. However, we still have convergence as soon as $1<\alpha<b/2$, which is possible only in the case $b>2$. Our rate is not as sharp as the one in~\cite{srivastava2013covariance} for heavy-tailed matrices in the independent case. We are not aware of how to extend the work of \cite{srivastava2013covariance} to dependent matrices, and claim that our result is {\rc the first rate} obtained on matrices that are simultaneously heavy-tailed and dependent.
\begin{remark}[Comparison]
    We discuss the comparison between Corollary \ref{cor:standard} and the analysis of the case with independent matrices by \cite{zhivotovskiy2021dimension,abdalla2022covariance}.
    Corollary 5 does not always recover the rates that are known in the i.i.d setting; however, the techniques used \cite{zhivotovskiy2021dimension,abdalla2022covariance} strongly rely on the independence assumption.
    Under specific assumptions, we make the following findings: 
    (i) In the bounded case, we recover the same rates as the previous studies, extending them from i.i.d to the non-i.i.d setting for free.
    (ii) In the exponential tail case, we recover these rates up to a $\log(n)$ factor, which we interpret as a cost of extending them to the dependent setting.
    (iii) In the polynomial tail case, we admit that we have a slower rate {\rc than the one above}. However, we are not aware of any work that tackles simultaneously heavy tails and time dependence. The fact that we obtain a rate of convergence here, even if it is slow, is already a contribution.
\end{remark}

\subsection{Result on Infinite-Dimensional Operator}
Here, we consider the case of an infinite-dimensional separable Hilbert space $\mathbb{H}$, which has also been studied in \citep{koltchinskii2017concentration,giulini2018robust}.
Following \citep{giulini2018robust}, we extend our result for the $p$-dimensional setting to the infinite-dimensional case.

The idea is to find a finite-dimensional approximation of the spectral norm of operators using an orthonormal basis.
Let $(e_{j})_{j \in \mathbb{N}}$ be an orthonormal basis of $\mathbb{H}$ and $\mathbb{H}_{k}:=\mathrm{span}\left\{e_{1},\ldots,e_{k}\right\}$.
For an $\mathbb{H}\otimes\mathbb{H}$-valued random operator $M_{\ell}$, let $( M_{\ell}^{\left(j_{1},j_{2}\right)})_{j_1,j_2=1}^k$ be a sequence of real-valued random variables such that $M_{\ell}^{\left(j_{1},j_{2}\right)}:=\langle M_{\ell}e_{j_{1}},e_{j_{2}}\rangle$.
We see that
\begin{align*}
    &\sup_{u_{k}\in \mathbb{H}_{k} : \|u_{k}\|=1} \bigabs{ \left\langle \left(\frac{1}{n}\sum_{\ell=1}^{n}M_{\ell}-\Sigma\right)u_{k},u_{k}\right\rangle }\\
    &=\sup_{\substack{u_{k}^{(j)}\in\mathbb{R},j=1,\ldots,k\\\sum_{j=1}^{k}(u_{k}^{(j)})^{2}=1}}\bigabs{\frac{1}{n}\sum_{i=1}^{n}\sum_{j_{1}=1}^{k}\sum_{j_{2}=1}^{k}u_{k}^{\left(j_{1}\right)}u_{k}^{\left(j_{2}\right)}\left(M_{\ell}^{\left(j_{1},j_{2}\right)} - \Ep\left[M_{\ell}^{\left(j_{1},j_{2}\right)}\right] \right) }. 
\end{align*}
Then, the right-hand side is a spectral norm of the difference between the sampled \textit{matrix} and the population one, to which Theorem \ref{thm:main} and Corollary \ref{cor:standard} are applicable.
Based on this approach, and considering the limit $k \to \infty$, we obtain the following result:
\begin{theorem} \label{thm:operator}
Assume that $\mathbf{M}$ is a sequence of positive, semi-definite, symmetric, $\mathbb{H}\otimes\mathbb{H}$-valued random operators with $\Ep\left[M_{\ell}\right]=\Sigma$, and also satisfies Assumptions~\ref{asm:weakdep} and \ref{asm:tailbound}. 
For any $\tau>0$ and for all $t>0$, with probability at least $1-\exp(-t)- \sum_{\ell=1}^n \Surv_\ell(\tau)$, it holds that
\begin{align*}
    \left\|\frac{1}{n}\sum_{\ell=1}^{n} M_{\ell}-\Sigma\right\|\le{ 4}\sqrt{2}\left\|\Sigma\right\|\left({ \tau+\Gamma_{n}}\right)\sqrt{\frac{4\mathbf{r}\left(\Sigma\right)+t}{n}} + G(\tau).
\end{align*}
\end{theorem}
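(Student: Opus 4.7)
The plan is to reduce the infinite-dimensional case to the finite-dimensional Corollary~\ref{cor:standard} by projecting onto $\mathbb{H}_k=\mathrm{span}\{e_1,\ldots,e_k\}$ and then letting $k\to\infty$. Let $P_k$ denote the orthogonal projection onto $\mathbb{H}_k$ and set $M_\ell^{(k)}:=P_k M_\ell P_k$; via the basis this is identified with the $k\times k$ positive semi-definite matrix $(M_\ell^{(j_1,j_2)})_{j_1,j_2\le k}$ already appearing before the theorem. Since $M\mapsto P_k M P_k$ is linear and $1$-Lipschitz in operator norm, Proposition~\ref{proposition:lipschitz:dependence} shows $(M_\ell^{(k)})_{\ell=1}^n$ satisfies Assumption~\ref{asm:weakdep} with the same coefficients $\Gamma_{\ell,n}$; from $\|M_\ell^{(k)}\|\le\|M_\ell\|$ it also inherits Assumption~\ref{asm:tailbound} with the same $F$ and $V$.

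Applying Corollary~\ref{cor:standard} to $(M_\ell^{(k)})_{\ell=1}^n$ with $\Sigma^{(k)}:=P_k\Sigma P_k=\Ep[M_\ell^{(k)}]$ yields, with probability at least $1-e^{-t}-nF(\tau)$,
\begin{equation*}
\left\|\frac{1}{n}\sum_{\ell=1}^n M_\ell^{(k)}-\Sigma^{(k)}\right\|\le B_k:=2\sqrt{2}\,\|\Sigma^{(k)}\|(2\tau+\Gamma_n)\sqrt{\frac{4\mathbf{r}(\Sigma^{(k)})+t}{n}}+V\sqrt{F(\tau)}.
\end{equation*}
Because $\Sigma$ is positive semi-definite and the $\mathbb{H}_k$ are nested, $\|\Sigma^{(k)}\|\nearrow\|\Sigma\|$ and $\mathrm{Tr}(\Sigma^{(k)})=\sum_{j\le k}\langle\Sigma e_j,e_j\rangle\nearrow\mathrm{Tr}(\Sigma)$, hence $\mathbf{r}(\Sigma^{(k)})\to\mathbf{r}(\Sigma)$ and $B_k\to B_\infty$, the right-hand side of the claimed bound. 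Writing $T_n:=\frac{1}{n}\sum_\ell M_\ell-\Sigma$, self-adjointness gives $\|P_k T_n P_k\|=\sup_{u\in\mathbb{H}_k,\|u\|=1}|\langle T_n u,u\rangle|$, which is non-decreasing in $k$ and converges to $\|T_n\|$ by continuity of the quadratic form and density of $\bigcup_k\mathbb{H}_k$ in $\mathbb{H}$.

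The delicate point is transferring the finite-$k$ probability estimate to the limit, since the threshold $B_k$ itself varies with $k$ and one cannot naively intersect events over $k$. The fix is to introduce a slack $\delta>0$: choosing $K_0$ so that $B_k\le B_\infty+\delta$ for all $k\ge K_0$ gives $\mathbb{P}(\|P_k T_n P_k\|>B_\infty+\delta)\le\mathbb{P}(\|P_k T_n P_k\|>B_k)\le e^{-t}+nF(\tau)$ for such $k$. Continuity of $\mathbb{P}$ from below along the increasing family of events $\{\|P_k T_n P_k\|>B_\infty+\delta\}$, whose union equals $\{\|T_n\|>B_\infty+\delta\}$ by the monotone convergence above, then yields $\mathbb{P}(\|T_n\|>B_\infty+\delta)\le e^{-t}+nF(\tau)$, and letting $\delta\downarrow 0$ completes the argument. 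The remaining ingredients — Lipschitz preservation of Assumption~\ref{asm:weakdep}, domination under projection, and monotone convergence of the effective rank — are routine.
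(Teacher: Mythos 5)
Your proposal is correct and follows essentially the same route as the paper: reduce to the finite-dimensional result on the projections $P_k M_\ell P_k$, use monotonicity of $\sup_{u\in\mathbb{H}_k,\|u\|=1}|\langle T_n u,u\rangle|$ in $k$ together with continuity of $\mathbb{P}$ along the increasing events, and pass to the limit. The only cosmetic differences are that you invoke Corollary~\ref{cor:standard} directly on the projected sequence and absorb the $k$-dependence of the threshold with a slack $\delta$, whereas the paper applies Theorem~\ref{thm:main} with a $k$-uniform threshold (via $\|\Sigma_k\|\le\|\Sigma\|$ and $\mathrm{tr}(\Sigma_k)\le\mathrm{tr}(\Sigma)$) and performs the truncation step at the operator level afterwards.
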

The obtained upper bound remains the same, even for infinite dimensions. Our approach in the finite-dimensional case cannot be applied directly in the infinite-dimensional case. This is because our proof using variational equalities depends on a density function of $p$-dimensional Gaussian vector. {\rc Thus, we cannot avoid first considering $\mathbb{H}_k$ and subsequently letting $k\rightarrow\infty$}.

\section{Applications} \label{sec:application}

\subsection{Covariance Operator Estimation} \label{sec:ex_covariance}

We consider the problem of covariance operator estimation using dependent samples with heavy tails {\rc under the setting and assumptions of Proposition \ref{prop:assumptions}}.
Let $(X_\ell)_{\ell \in \mathbb{N}}$ be a CBS in $\mathbb{H}$ and consider the strongly stationary process  $(Y_\ell)_{\ell \in \mathbb{Z}}$, given by
\begin{align*}
    Y_\ell = X_\ell + \varepsilon_\ell,
\end{align*}
as in Proposition \ref{prop:assumptions}.
Additionally, assume that $\Ep[X_1] = 0$ and its covariance operator is $\Sigma \in \mathcal{S}$; that is, $\Sigma$ is defined as $\Sigma u = \Ep[ \langle Y_1, u \rangle Y_1]$ for any $u \in \mathbb{H}$.

{\rc Assume that }we have $n$ observations $\mathbf{Y} = (Y_\ell)_{\ell = 1,\ldots,n}$ from the process $(Y_\ell)_{\ell \in \mathbb{Z}}$. Then, we define the empirical covariance operator:
\begin{align*}
    M_\ell u :=\langle Y_\ell, u \rangle Y_\ell,
\end{align*}
for any $u \in \mathbb{H}$.
Using this notion, we obtain $n$ operators $\mathbf{M}$ from $\mathbf{Y}$ and then obtain the empirical covariance operator as 
\begin{align}
    \hat{\Sigma} := \frac{1}{n} \sum_{\ell=1}^n M_\ell. \label{def:cov_operator}
\end{align}
By a direct application of Corollary \ref{cor:standard}, we obtain the following result, stated without proof.
\begin{proposition} \label{prop:covariance}
{\rc Assume that the sequence $\mathbf{M}$ satisfies the setting of Proposition \ref{prop:assumptions}.}
Consider the empirical covariance operator defined in \eqref{def:cov_operator}.
Then, for any $\tau>0$ and $t>0$, the following inequality holds with probability at least $1-\exp(-t)-\sum_{\ell=1}^n \Surv_\ell(\tau)$:
\begin{align*}
    \|\hat{\Sigma}-\Sigma\|\le { 4}\sqrt{2}\left\|\Sigma\right\|\left({\tau+\Gamma_{n}}\right)\sqrt{\frac{4\mathbf{r}\left(\Sigma\right)+t}{n}} + G(\tau),
\end{align*}
where $\Surv_\ell(\tau) =  \mathbf{1}_{\{\tau \leq 4B^2\}} +  \Surv_\varepsilon(\tau/4)$, and $G(\tau) = \int_{\tau}^{\infty} \Surv_\ell(t) {\rm d}t$.
\end{proposition}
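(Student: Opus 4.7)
The plan is to recognize that Proposition \ref{prop:covariance} is a direct specialization of Theorem \ref{thm:operator} to the rank-one covariance setting, combined with the explicit tail computation already carried out in Proposition \ref{prop:assumptions}. The only real work is verifying that the operators $M_\ell$ defined by $M_\ell u = \langle Y_\ell, u\rangle Y_\ell$ meet the hypotheses of Theorem \ref{thm:operator}, after which the inequality follows by invocation.

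First, I would check that each $M_\ell$ is a positive semi-definite symmetric element of $\mathcal{S}$. Explicitly, $M_\ell$ is the rank-one operator $Y_\ell \otimes Y_\ell$, so $\langle M_\ell u, v\rangle = \langle Y_\ell, u\rangle\langle Y_\ell, v\rangle$ is symmetric in $(u,v)$ and $\langle M_\ell u, u\rangle = \langle Y_\ell, u\rangle^2 \geq 0$, with operator norm $\|M_\ell\| = \|Y_\ell\|^2$. By construction $\Ep[M_\ell u] = \Ep[\langle Y_\ell, u\rangle Y_\ell] = \Sigma u$ for every $u \in \mathbb{H}$, so $\Ep[M_\ell] = \Sigma$. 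Since $\mathbf{M}$ is assumed to satisfy Assumptions \ref{asm:weakdep} and \ref{asm:tailbound}, the hypotheses of Theorem \ref{thm:operator} are in force.

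Applying Theorem \ref{thm:operator} with $\hat\Sigma = \frac{1}{n}\sum_{\ell=1}^n M_\ell$ produces, for any $\tau, t > 0$, the displayed inequality with probability at least $1 - e^{-t} - nF(\tau)$, where $F$ is the tail-bound function from Assumption \ref{asm:tailbound}. It remains only to substitute the explicit form of $F$ adapted to the CBS-plus-noise structure $Y_\ell = X_\ell + \varepsilon_\ell$. This is precisely what Proposition \ref{prop:assumptions} supplies: using $\|Y_\ell\|^2 \leq 2(\|X_\ell\|^2 + \|\varepsilon_\ell\|^2) \leq 2B^2 + 2\|\varepsilon_\ell\|^2$ together with the boundedness of the CBS component, one obtains $F(\tau) = \mathbf{1}_{\{\tau \leq 4B^2\}} + F_\varepsilon(\tau/4)$, matching the tail function stated in the proposition.

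No part of this is technically demanding; the argument is essentially bookkeeping. The only point requiring a little care is verifying that the tail function quoted in the conclusion lines up exactly with the one produced by Proposition \ref{prop:assumptions}, which is why I would present the bound on $\|Y_\ell\|^2$ explicitly to make the factor of $4$ and the additive indicator transparent. This is why the authors felt comfortable stating the result without a separate proof.
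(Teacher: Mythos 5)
Your proposal is correct and matches the paper's intended argument: the paper states this result without proof precisely because it is a direct application of the main deviation bound (Corollary~\ref{cor:standard}, or Theorem~\ref{thm:operator} in the operator setting) to $M_\ell = Y_\ell \otimes Y_\ell$, with the explicit tail function $F(\tau)=\mathbf{1}_{\{\tau\leq 4B^2\}}+F_\varepsilon(\tau/4)$ supplied by Proposition~\ref{prop:assumptions}. Your verification of symmetry, positive semi-definiteness, $\Ep[M_\ell]=\Sigma$, and the factor-of-$4$ bookkeeping in the tail bound is exactly the content the authors left implicit.
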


\subsection{Lagged Covariance Matrix Estimation} \label{sec:lagged_covariance}
We consider the estimation of a lagged covariance matrix, which is also called a cross-covariance matrix.
Consider the same process $(Y_\ell)_{\ell \in \mathbb{Z}}$ as in Section \ref{sec:ex_covariance}. Here, we aim to estimate 
\begin{align*}
    \Sigma_1 := \mathbb{E}[Y_{\ell} Y_{\ell+1}^\top],
\end{align*}
from $n$ observations $\mathbf{Y} = (Y_\ell)_{\ell=1,\ldots,n}$.
This problem and the solution discussed below {\rc can obviously be extended to} $\Sigma_h := \mathbb{E}[Y_{\ell} Y_{\ell+h}^\top]$ for $h \geq 2$. 
Note that $\Sigma_1$ is not symmetric; hence, our main results cannot be directly applied to a naive estimator, $\hat{\Sigma}_1 := (n-1)^{-1} \sum_{\ell=1}^{n-1} Y_\ell Y_{\ell + 1}^\top$.
We still denote $\Sigma =\mathbb{E}[Y_{\ell} Y_{\ell}^\top]$, which is shown as $\Sigma_h$ for $h=0$, and its empirical estimator $\hat{\Sigma} := (n-1)^{-1} \sum_{\ell=1}^{n-1} Y_\ell Y_\ell^\top$.

To estimate $\Sigma_1$, we define an augmented process and estimator for the covariance matrix of the process.
We define the Hilbert space $\mathbb{H}^2$ equipped with the scalar product $\langle(y_1,y_2),(y_1',y_2')\rangle = \langle y_1,y_1'\rangle  + \langle y_1,y_2'\rangle $ for $(y_1,y_2), (y'_1,y'_2) \in \mathbb{H}^2$. 
Let $\tilde{Y}_{\ell} = (Y_\ell,Y_{\ell+1})^\top$ be the $\mathbb{H}^2$-valued augmented process, whose covariance is
\begin{align}
    \Sigma_{0:1} := \mathbb{E}\left[\tilde{Y}_{\ell} \tilde{Y}_{\ell}^\top \right] = \left(
\begin{array}{c c}
\mathbb{E}[Y_{\ell} Y_{\ell}^\top] & \mathbb{E}[Y_{\ell} Y_{\ell+1}^\top] \\
\mathbb{E}[Y_{\ell + 1} Y_{\ell}^\top] & \mathbb{E}[Y_{\ell + 1} Y_{\ell + 1}^\top]
\end{array}
\right) = \left(
\begin{array}{c c}
\Sigma_0 & \Sigma_1 \\
\Sigma_1^\top &\Sigma_0
\end{array}
\right). \label{def:lagged_cov}
\end{align}
The main idea is to estimate $\Sigma_{0:1}$, which directly leads to an estimator of $\Sigma_1$.

Using observations $\mathbf{Y}$, we build $\tilde{Y}_{1},\ldots,\tilde{Y}_{n-1}$ and their sample-wise product matrices $M_1,\ldots,M_{n-1}$ as 
$$ M_\ell :=\tilde{Y}_{\ell} \tilde{Y}_{\ell}^\top = \left(
\begin{array}{c c}
Y_{\ell} Y_\ell^\top & Y_{\ell} Y_{\ell + 1}^\top \\
Y_{\ell + 1} Y_{\ell}^\top & Y_{\ell + 1} Y_{\ell + 1}^\top
\end{array}
\right).
$$
We then construct {\rc an estimator}
\begin{align}
    \hat{\Sigma}_{0:1} := \frac{1}{n-1} \sum_{\ell=1}^{n-1} M_\ell =   \left(
\begin{array}{c c}
\hat{\Sigma} & \hat{\Sigma}_1 \\
\hat{\Sigma}_1^\top & \hat{\Sigma}
\end{array}
\right). \label{def:lagged_estimator}
\end{align}
We show a concentration inequality for $\hat{\Sigma}_{0:1}$ and additionally show the convergence of $\hat{\Sigma}$ and $\hat{\Sigma}_1$.
\begin{proposition} \label{prop:lag}
    Assume that $\mathbf{Y}$ is as in Proposition~\ref{prop:assumptions}.
    Consider the matrices in \eqref{def:lagged_cov} and the estimator in \eqref{def:lagged_estimator}.
    Then, for any $\tau>0$ and $t>0$, the following inequality holds with probability at least $1-\exp(-t)-\sum_{\ell=1}^{n-1} \Surv_\ell(\tau)$:
    \begin{align*}
    \|\hat{\Sigma}_{0:1} - \Sigma_{0:1} \|
    \le 4\sqrt{2}\left(\|\Sigma_{0:1}\|\right) \left({\tau}+\Gamma_{n}\right)\sqrt{\frac{4 \mathbf{r}\left(\Sigma_{0:1}\right) +t}{n - 1}} + G(\tau),
    \end{align*}
where $\Surv_\ell(\tau) =  \mathbf{1}_{\{\tau \leq 4B^2\}} +  \Surv_\varepsilon(\tau/4)$, and $G(\tau) = \int_{\tau}^{\infty} \Surv_\ell(t) {\rm d}t$.
    Furthermore, with the same probability, we obtain
    \begin{align*}
    &\max\{\|\hat{\Sigma}-\Sigma\|, \| \hat{\Sigma}_1 - \Sigma_1\|\}
    \\
    &\le 4\sqrt{2}\left(\|\Sigma_1\| + \|\Sigma\|\right) \left({\tau}+\Gamma_{n}\right)\sqrt{\frac{4 \mathbf{r}\left(\Sigma_{0:1}\right)  +t}{n - 1}} + G(\tau) .
\end{align*}
\end{proposition}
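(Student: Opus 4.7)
The plan is to reduce Proposition~\ref{prop:lag} to an application of Theorem~\ref{thm:operator} on the augmented Hilbert space $\mathbb{H}^2$, using the augmented sample $\tilde{Y}_\ell = (Y_\ell, Y_{\ell+1})$, and then to extract the block-wise bounds in the second statement by standard norm-domination for block matrices.

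First, I would verify that the augmented process $\tilde{\mathbf{Y}} = (\tilde{Y}_\ell)_{\ell=1}^{n-1}$ inherits a CBS structure in $\mathbb{H}^2$. Writing $Y_\ell = C(\xi_\ell, \xi_{\ell-1}, \ldots)$ and reindexing $\zeta_m := \xi_{m+1}$, one checks that $\tilde{Y}_\ell = \tilde{C}(\zeta_\ell, \zeta_{\ell-1}, \ldots)$ with
\[
\tilde{C}(a_0, a_1, \ldots) = (C(a_1, a_2, \ldots), C(a_0, a_1, \ldots)),
\]
which is Lipschitz on $\mathbb{H}^2$ with coefficients $\tilde{\alpha}_0 = \alpha_1$ and $\tilde{\alpha}_i = \alpha_i + \alpha_{i+1}$ for $i \ge 1$, so that $\sum_i \tilde{\alpha}_i \le 2\mathcal{A} < \infty$. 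Moreover $\|\tilde{Y}_\ell\|^2 \le 2 B^2$. Applying Proposition~\ref{prop:cbs} (or Proposition~\ref{prop:assumptions} if noise is added) to $\tilde{\mathbf{Y}}$ then shows that $\mathbf{M} = (M_\ell) = (\tilde{Y}_\ell \tilde{Y}_\ell^\top)_{\ell=1}^{n-1}$ satisfies Assumption~\ref{asm:weakdep} with coefficients bounded by $\Gamma_n$ up to absolute constants, and satisfies Assumption~\ref{asm:tailbound} with the stated $F$; the factor $4$ and the constant $32 B^2$ arise by pushing $\tilde{B} = \sqrt{2} B$ through the quadratic map $y \mapsto y y^\top$ and through the $\mathbb{H}^2$-block structure.

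Second, I would apply Theorem~\ref{thm:operator} to $\mathbf{M}$ on $\mathbb{H}^2$ with sample size $n-1$, target operator $\Sigma_{0:1}$, and effective rank $\mathbf{r}(\Sigma_{0:1})$. This directly produces the first claimed inequality (the $2\sqrt{2}$ of Theorem~\ref{thm:operator} is absorbed into the $4\sqrt{2}$ together with the constants from the augmentation). For the second inequality, I would use the block decomposition
\[
\hat{\Sigma}_{0:1} - \Sigma_{0:1} = \begin{pmatrix} \hat{\Sigma} - \Sigma & \hat{\Sigma}_1 - \Sigma_1 \\ (\hat{\Sigma}_1 - \Sigma_1)^\top & \hat{\Sigma} - \Sigma \end{pmatrix},
\]
so that each sub-block has operator norm at most that of the whole matrix, yielding $\max\{\|\hat{\Sigma} - \Sigma\|, \|\hat{\Sigma}_1 - \Sigma_1\|\} \le \|\hat{\Sigma}_{0:1} - \Sigma_{0:1}\|$. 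The same block argument applied to $\Sigma_{0:1}$ itself gives $\|\Sigma_{0:1}\| \le \|\Sigma\| + \|\Sigma_1\|$, which converts the prefactor $\|\Sigma_{0:1}\|$ into $\|\Sigma\| + \|\Sigma_1\|$ in the final bound.

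The main technical obstacle is the bookkeeping in the first step: one must carefully track how the augmentation acts on the CBS Lipschitz coefficients, on the almost-sure bound $B$, and on the tail function $F_\varepsilon$, so that the resulting constants land at exactly the $32 B^2$ and the factor-$4$ correction in $F(\tau)$ stated in the proposition. Everything after Step~1 is a direct invocation of Theorem~\ref{thm:operator} combined with the elementary fact that a sub-block of an operator is operator-norm dominated by the whole.
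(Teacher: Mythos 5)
Your proposal follows essentially the same route as the paper: augment to $\tilde{Y}_\ell=(Y_\ell,Y_{\ell+1})$, check that the augmented process is again a CBS with Lipschitz coefficients $\alpha_i+\alpha_{i+1}$ (hence Assumption~\ref{asm:weakdep} via Proposition~\ref{prop:assumptions} and Assumption~\ref{asm:tailbound} via a union bound over the blocks, which is where the factor $4$ in $F$ comes from), apply the main deviation bound to $\hat{\Sigma}_{0:1}$, and then extract the block-wise bounds from $\max\{\|\hat{\Sigma}-\Sigma\|,\|\hat{\Sigma}_1-\Sigma_1\|\}\le\|\hat{\Sigma}_{0:1}-\Sigma_{0:1}\|$ together with $\|\Sigma_{0:1}\|\le\|\Sigma\|+\|\Sigma_1\|$. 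The only cosmetic difference is that you invoke Theorem~\ref{thm:operator} where the paper invokes Corollary~\ref{cor:standard}, and you defer the same constant bookkeeping that the paper itself leaves largely implicit.
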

The first statement is simply an application of Theorem \ref{thm:main} to the estimator $\hat{\Sigma}_{0:1}$.
The second simply follows from the first statement together with the facts $\left\|\Sigma_{0:1}\right\| \leq \|\Sigma_1\| + \|\Sigma\| $. 
By using the relation $\mathrm{\rc Tr}(\Sigma_{0:1}) =2\mathrm{\rc Tr}(\Sigma) = 2 \|\Sigma\| \mathbf{r}\left(\Sigma\right)$, we obtain the result.

\subsection{Linear Hidden Markov Model}
We consider a linear hidden Markov model (HMM) and study estimation in this model.
Specifically, we consider a HMM model with a lag order $1$ and set $\mathbb{H} = \mathbb{R}^{p}$.
Assume that we observe a sequence of $p$-dimensional random vectors $ \mathbf{Y} = (Y_\ell)_{\ell=0}^n$ which follows the following equations for $\ell \in \mathbb{Z}$:
\begin{align}
    Y_{\ell} &= X_{\ell} + \varepsilon_\ell,  \label{def:hmm1}\\
    X_{\ell} &= A X_{\ell - 1} + \xi_\ell \label{def:hmm2}
\end{align}
where $A \in \mathbb{R}^{p \times p}$ is an unknown parameter matrix such that $\|A\| \in (0,1)$, and $(X_\ell)_{\ell \in \mathbb{Z}}$ is a latent process such that $\|X_\ell\| \leq B$ almost surely.
Here, $(\varepsilon_\ell)_{\ell \in \mathbb{Z}}$ is a sequence of i.i.d. $p$-dimensional noise variable with zero mean and finite variance, and $\xi_\ell$ is a sequence of  i.i.d. $p$-dimensional bounded noise variable with zero mean such that $\|\xi_\ell\| \leq B_\xi$ almost surely.
Under the condition $\|A\| \in (0,1)$ and the boundedness of the $\xi_\ell$'s, the upper bound $B$ is guaranteed to be finite.
For brevity, we assume that $\Ep[\varepsilon_\ell \varepsilon_\ell^\top] = \Ep[\xi_\ell \xi_\ell^\top] = I$.
We also define the covariance matrix $\Sigma:= \Ep[Y_\ell Y_\ell^\top]$ and the lagged covariance matrix $\Sigma_1:= \Ep[Y_{\ell + 1} Y_\ell^\top]$.
Here, we aim to estimate the unknown parameter matrix $A$.

We study a convenient form of the HMM model.
We define noise matrices $\mathsf{E} = (\varepsilon_0,\ldots,\varepsilon_{n-1}) \in \mathbb{R}^{p \times n}$, $\mathsf{E}_+ = (\varepsilon_1,\ldots,\varepsilon_n) \in \mathbb{R}^{p \times n}$, and $\mathsf{Z} = (\xi_1,\ldots,\xi_n) \in \mathbb{R}^{p \times n}$ and also define matrices $\mathsf{Y} = (Y_0,\ldots,Y_{n-1}) \in \mathbb{R}^{p \times n}$ and $\mathsf{Y}_+ = (Y_1,\ldots,Y_{n}) \in \mathbb{R}^{p \times n}$.
Then, we rewrite \eqref{def:hmm1} and \eqref{def:hmm2} as
\begin{align*}
    (\mathsf{Y}_+ - \mathsf{E}_+) = A (\mathsf{Y} - \mathsf{E}) + \mathsf{Z}.
\end{align*}
Multiplying $(\mathsf{Y} - \mathsf{\rc E})^\top$ on both sides from the right and taking an expectation yields
\begin{align*}
    A &= (\Ep[(\mathsf{Y}_+ - \mathsf{E}_+) (\mathsf{Y} - \mathsf{E})^\top] - \Ep[\mathsf{Z} (\mathsf{Y} - \mathsf{E})^\top]) \Ep[(\mathsf{Y} - \mathsf{E}) (\mathsf{Y} - \mathsf{E})^\top]^{-1}\\
    &= \Sigma_{1} (\Sigma + I)^{-1}.
\end{align*}
Here, we utilize the independent properties of the noise, and $\Ep[\varepsilon_\ell \varepsilon_\ell^\top]=I$.

We then define an estimator of $A$.
Using the estimators $\hat{\Sigma} := \mathsf{Y} \mathsf{Y}^\top / n = n^{-1} \sum_{\ell=0}^{n-1} Y_\ell Y_\ell^\top$ and $\hat{\Sigma}_{1} := \mathsf{Y}_+ \mathsf{Y}^\top / n = n^{-1} \sum_{\ell=0}^{n-1} Y_{\ell+1} Y_\ell^\top$,  we define the following estimator:
\begin{align}
    \hat{A} :=  \hat{\Sigma}_{1} (\hat{\Sigma} +  I)^{-1}. \label{def:hmm_estimator}
\end{align}
Then, we obtain the following result:
\begin{proposition} \label{prop:hmm}
    Consider the HMM model \eqref{def:hmm1}-\eqref{def:hmm2} and the estimator in \eqref{def:hmm_estimator} for the parameters in the model.
    Then, for any $t > 0$ and $\tau > 0$, with probability at least $1-\exp(-t)-\sum_{\ell=1}^{n-1} \Surv_\ell(\tau)$, the following inequality holds:
\begin{align*}
    &\|\hat{A} - A\| \\
    &\leq 4\sqrt{2}\left(\|\Sigma_1\| + \|\Sigma\|\right) (1 + \|\Sigma_1\|)\left(\tau+\Gamma_{n}\right)\sqrt{\frac{4 \mathbf{r}\left(\Sigma_{0:1}\right)  +t}{n - 1}} + (1 + \|\Sigma_1\|) G(\tau),
\end{align*}
where $\Sigma_{0:1}$ is defined in \eqref{def:lagged_cov}, $\Surv_\ell(\tau) =  \mathbf{1}_{\{\tau \leq 4B^2\}} +  \Surv_\varepsilon(\tau/4)$, and $G(\tau) = \int_{\tau}^{\infty} \Surv_\ell(t) {\rm d}t$.
\end{proposition}
It is obtained by bounding the estimation error $\|\hat{A} - A\|$ with the estimation errors of the covariance matrix $\Sigma$ and the lagged covariance matrix $\Sigma_1$, as described in Proposition \ref{sec:lagged_covariance}. 
Note that it is possible to extend the number of lags in this HMM model to more than $1$.

\subsection{Overparameterized Linear Regression}

Here, we study a linear regression problem with dependent and heavy-tail covariates in the overparameterization framework developed by \citep{bartlett2020benign}.

Let $(X_\ell)_{\ell \in \mathbb{Z}}$ be a CBS as a $\mathbb{H}$-valued latent process and $(Y_\ell)_{\ell \in \mathbb{Z}}$ be a generated process as  a $\mathbb{H}$-valued covariate such that
\begin{align}
    Y_\ell &= X_\ell + \varepsilon_\ell \label{def:lin_reg_latent},
\end{align}
where $\varepsilon_\ell$ is an i.i.d. $\mathbb{H}$-valued noise variable with a mean value of zero.
Additionally, we define $\theta^* \in \mathbb{H}$ as a true unknown parameter and a covariance operator $\Sigma = \Ep[Y_\ell Y_\ell^\top]$.
For $\ell \in \mathbb{Z}$, we consider an $\mathbb{R}$-valued random variable $Z_\ell$ called the response variable, given by:
\begin{align}
    Z_\ell &= \langle \theta^*, Y_\ell \rangle + U_\ell, \label{def:lin_reg_model}
\end{align}
where $U_\ell$ is an $\mathbb{R}$-valued independent random variable with mean zero and a variance  $\sigma^2 > 0$.

The goal of the regression problem is to estimate $\theta^*$ from the observations {\rc $\{(Z_i, Y_i):i=1,\ldots,n\}$}.
We introduce a design matrix and operator as $\mathsf{Z} = (Z_1,\ldots,Z_n)^\top \in \mathbb{R}^{n}$ and $\mathsf{Y}: \mathbb{H} \to \mathbb{R}^n$ such that {\rc $\mathsf{Y} \theta = ( \langle Y_1, \theta \rangle ,\ldots, \langle Y_n, \theta \rangle )^\top \in \mathbb{R}^n$ holds} for $\theta \in \mathbb{H}$.
{\rc Similarly, with $E = (e_1,...,e_n)^\top \in \mathbb{R}^n$, we define an operator $\mathsf{Y}^\top : \mathbb{R}^n \to \mathbb{H}$ such that $\mathsf{Y}^\top E = \sum_{i=1}^n e_i Y_i$.}
{\rc Further, we define an empirical covariance operator $\hat{\Sigma} : \mathbb{H} \to \mathbb{H}$ as $\hat{\Sigma} = \mathsf{Y}^\top \mathsf{Y} / n$ and a projection operator $\Pi_\mathsf{Y}: \mathbb{H} \to \mathbb{H}$ as $\Pi_\mathsf{Y} := \mathsf{Y}^\top (\mathsf{Y} \mathsf{Y}^\top)^{-1} \mathsf{Y}$.}

To estimate $\theta^*$, we define the minimum norm estimator as:
\begin{align}
    \hat{\theta} = \argmin_{\theta \in \mathbb{H}} \{\|\theta\|^2: \mathsf{Y}^\top \mathsf{Y} \theta = \mathsf{Y}^\top \mathsf{Z}\} = \mathsf{Y}^\top (\mathsf{Y} \mathsf{Y}^\top)^\dagger \mathsf{Z}, \label{def:estimator_regression}
\end{align}
where $^\dagger$ denotes the pseudo-inverse of operators.
The excess risk of  $\hat{\theta}$ is measured using 
\begin{align}
    R(\hat{\theta}) := \Ep_{(Z_*, Y_*)}[(Z_* - \langle Y_*, \hat{\theta} \rangle)^2 - (Z_* - \langle Y_*, {\theta}^* \rangle)^2], \label{def:risk_regression}
\end{align}
where $(Z_*, Y_*)$ is an i.i.d. copy of $(Z_1,Y_1)$ from the regression model \eqref{def:lin_reg_model} and $\Ep_{(Z_*, Y_*)}[\cdot]$ is the expectation with respect to $(Z_*, Y_*)$.

We present a technical assumption that specializes in the overparameterization setting.
Let $\Pi_\Sigma^{\perp}$ be a projection operator onto a linear space spanned by vectors orthogonal to any eigenvector of $\Sigma$.
\begin{assumption} \label{asmp:design_matrix_z}
$\mathrm{dim}(\Pi_\Sigma^\perp (\mathsf{Y}) ) > n $ holds almost surely.
\end{assumption}
This assumption is identical to Assumption 1 in \citep{bartlett2020benign} and is intended to address cases where no degeneracies exist, such as the perfect {\rc collinearity} between the variables.

With this setting, we bound the risk of the estimator for the overparameterized linear regression model.

\begin{proposition} \label{prop:over_param}
Consider the linear regression model \eqref{def:lin_reg_model} with the process $(X_\ell)_{\ell \in \mathbb{Z}}$ in \eqref{def:lin_reg_latent} being a CBS.
Assume that Assumption  \ref{asmp:design_matrix_z} holds.
Consider the estimator \eqref{def:estimator_regression} and its excess risk \eqref{def:risk_regression}.
Assume, for any $t,\tau > 0$, with probability at least $1-\exp(-t)- \sum_{\ell=1}^n \Surv_\ell(\tau)$, we have
\begin{align*}
    R(\hat{\theta}) &\leq  { 4}\sqrt{2} c \|\theta^*\|^2\left\|\Sigma\right\| \left({\tau+\Gamma_{n}}\right)\sqrt{\frac{4\mathbf{r}\left(\Sigma\right)+t}{n}} + G(\tau) + ct \sigma^2 \mathrm{\rc Tr}(C),
\end{align*}
where $C = (\mathsf{Y} \mathsf{Y}^\top)^{-1} \mathsf{Y} \Sigma \mathsf{Y}^\top (\mathsf{Y} \mathsf{Y}^\top)^{-1}$, $\Surv_\ell(\tau) =  \mathbf{1}_{\{\tau \leq 4B^2\}} +  \Surv_\varepsilon(\tau/4)$, and $G(\tau) = \int_{\tau}^{\infty} \Surv_\ell(t) {\rm d}t$.
\end{proposition}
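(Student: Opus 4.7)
The plan is to perform a bias--variance decomposition of $R(\hat{\theta})$, reduce the bias to the operator-norm deviation bound on $\|\hat{\Sigma}-\Sigma\|$ supplied by Proposition~\ref{prop:covariance}, and control the variance by a conditional Markov-type argument on the noise vector $U=(U_1,\dots,U_n)^\top$. First, expanding $Z_{\ast}=\langle Y_{\ast},\theta^{\ast}\rangle+U_{\ast}$ with $\Ep[U_{\ast}]=0$ and $(Z_{\ast},Y_{\ast})$ independent of $\hat{\theta}$ gives
\begin{align*}
R(\hat{\theta}) = \Ep_{Y_{\ast}}\langle Y_{\ast},\theta^{\ast}-\hat{\theta}\rangle^{2} = \|\Sigma^{1/2}(\theta^{\ast}-\hat{\theta})\|^{2}.
\end{align*}
Substituting $\mathsf{Z}=\mathsf{Y}\theta^{\ast}+U$ into the closed form of $\hat{\theta}$ and using Assumption~\ref{asmp:design_matrix_z} (which guarantees the Gram matrix $\mathsf{Y}\mathsf{Y}^{\top}$ is invertible, so that the pseudo-inverse coincides with the inverse) decomposes the error as
\begin{align*}
\hat{\theta}-\theta^{\ast} = -(I-\Pi_{\mathsf{Y}})\theta^{\ast} + \mathsf{Y}^{\top}(\mathsf{Y}\mathsf{Y}^{\top})^{-1}U,
\end{align*}
and the elementary inequality $\|a+b\|^{2}\le c\|a\|^{2}+\tfrac{c}{c-1}\|b\|^{2}$ for $c>1$ produces $R(\hat{\theta})\le cB+\tfrac{c}{c-1}V$ with bias $B=\theta^{\ast\top}(I-\Pi_{\mathsf{Y}})\Sigma(I-\Pi_{\mathsf{Y}})\theta^{\ast}$ and variance $V=U^{\top}CU$.

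The decisive algebraic observation for the bias is the identity $(I-\Pi_{\mathsf{Y}})\hat{\Sigma}=0$, which holds because $\hat{\Sigma}=\mathsf{Y}^{\top}\mathsf{Y}/n$ has its range contained in $\mathrm{range}(\mathsf{Y}^{\top})$, onto which $\Pi_{\mathsf{Y}}$ projects. This permits the substitution of $\Sigma-\hat{\Sigma}$ for $\Sigma$ inside $B$, giving
\begin{align*}
B = \theta^{\ast\top}(I-\Pi_{\mathsf{Y}})(\Sigma-\hat{\Sigma})(I-\Pi_{\mathsf{Y}})\theta^{\ast} \le \|\theta^{\ast}\|^{2}\,\|\hat{\Sigma}-\Sigma\|,
\end{align*}
so that Proposition~\ref{prop:covariance} applied with $\tau=t$ directly supplies the first two summands of the stated inequality after multiplication by $c\|\theta^{\ast}\|^{2}$. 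For the variance term, conditional on $\mathsf{Y}$ the coordinates of $U$ are independent, zero-mean, variance-$\sigma^{2}$ random variables, hence $\Ep_{U}V = \sigma^{2}\,\mathrm{tr}(C)$; Markov's inequality applied to the non-negative quadratic form $V$ yields $V\le ct\sigma^{2}\,\mathrm{tr}(C)$ on an event whose complement has probability $1/(ct)$, and a union bound over this event and the event of Proposition~\ref{prop:covariance} delivers the overall probability $1-\exp(-t)-nF_{\varepsilon}(t/4)$ once the absolute constant $c>1$ is adjusted.

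The main obstacle will be controlling the quadratic form $V=U^{\top}CU$: since only the second moment of $U_{\ell}$ is assumed, Markov delivers a tail of order $1/(ct)$, which is weaker than the target $\exp(-t)$; reconciling the two is precisely what forces the two-sided restriction $t\in(B^{2},n/c)$ in the statement and fixes the absolute constant $c$. A secondary subtlety is that the pivotal identity $(I-\Pi_{\mathsf{Y}})\hat{\Sigma}=0$ relies on the range of $\mathsf{Y}^{\top}$ being an honest $n$-dimensional subspace of $\mathbb{H}$, which is where Assumption~\ref{asmp:design_matrix_z} is indispensable; without it the projector $\Pi_{\mathsf{Y}}$ itself would not be well-defined and the reduction of the bias to an operator-norm deviation would break down.
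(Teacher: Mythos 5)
Your treatment of the bias term is essentially the paper's own argument: the paper also reduces $R(\hat{\theta})$ to the quadratic form $(\theta^*)^\top (I-\Pi_{\mathsf{Y}})\Sigma(I-\Pi_{\mathsf{Y}})\theta^*$ plus a variance term, inserts $\Sigma - \hat{\Sigma}$ using $(I-\Pi_{\mathsf{Y}})\mathsf{Y}^\top = 0$, bounds by $\|\theta^*\|^2\|\hat{\Sigma}-\Sigma\|$ via $\|I-\Pi_{\mathsf{Y}}\|\le 1$, and invokes Proposition~\ref{prop:covariance}. The only difference in route is that you derive the bias--variance decomposition from scratch (via the explicit error formula and Young's inequality with parameter $c$), whereas the paper imports it wholesale from Lemmas 2 and 19 of \citet{bartlett2020benign}; your version is more self-contained and makes visible where the constant $c>1$ and Assumption~\ref{asmp:design_matrix_z} enter.

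The genuine gap is in the variance term. Markov's inequality on the nonnegative quadratic form $U^\top C U$ gives $\mathbb{P}(U^\top C U > ct\,\sigma^2\mathrm{tr}(C)\mid \mathsf{Y}) \le 1/(ct)$, and a failure probability of order $1/(ct)$ cannot be absorbed into the claimed $e^{-t} + nF_\varepsilon(t/4)$: for moderate or large $t$, $1/(ct)$ dominates $e^{-t}$ by an exponential factor, and no choice of the absolute constant $c$ or of the window $t\in(B^2,n/c)$ repairs this. Your closing remark that the two-sided restriction on $t$ ``reconciles the two'' is asserted but not substantiated, and under the paper's standing hypothesis (only $\mathbb{E}[U_\ell]=0$ and $\Var(U_\ell)=\sigma^2$) it cannot be: an exponential tail for $U^\top C U$ requires stronger moment or sub-Gaussian assumptions on $U$, which is exactly what Lemma 19 / Lemma 11 of \citet{bartlett2020benign} use. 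To be fair, the paper itself does not close this gap either --- it writes the variance term as $ct\,\sigma^2\mathrm{tr}(C)$ by citation and explicitly declares its control ``outside our interest'' --- so your proof matches the paper's actual level of rigor on this point, but neither argument, as written, establishes the stated probability for the full bound including the variance term.
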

This result indicates that we can bound the bias term of the risk of the overparameterized linear regression estimator, even in the dependent and heavy-tailed setting. 
The last term $ct \sigma^2 \mathrm{\rc Tr}(C)$ represents the variance of the risk, which converges to zero by removing correlations and controlling for them using different techniques. 
This goes beyond the scope of this paper, see Lemma 11 in \citep{bartlett2020benign} for further details.

\section{Proofs for Main Results in Section~\ref{sec:result}}\label{sec:proof}

\subsection{Outline} \label{sec:proof_outline}

We first state two lemmas at the core of our proofs in Section~\ref{subsub:prelim}. Lemma~\ref{lemma:catoni} appears in many forms in the proofs of the PAC-Bayes bounds~\citep{cat2007,alquier2021user}. For convenience, we use the version stated in~\citep{catoni2017dimension,zhivotovskiy2021dimension}. Lemma~\ref{lemma:rio} is Rio's version of Hoeffding's inequality~\citep{rio2000inegalites} for weakly dependent random variables, that we applied to matrices.

Then, we prove Theorem~\ref{thm:main} in Section~\ref{subsub:proof:main}. We essentially follow the techniques developed in~\citep{catoni2017dimension,zhivotovskiy2021dimension}. However, both these studies rely on exponential inequalities for independent random variables. Therefore, we use Rio's inequality, which requires the boundedness assumption.

In Section~\ref{subsub:truncation}, we introduce a truncation function that transforms unbounded matrices into bounded ones. We thus apply Theorem~\ref{thm:main} to the truncated matrices. We then control the effect of the truncation function to prove Corollary~\ref{cor:standard}.

We mention that we have developed the proof to deal with dependent matrices. 
For the case with independent matrices, the tools developed in \citep{catoni2017dimension,zhivotovskiy2021dimension} can be used.
However, their proofs rely strongly on the independence property, which is why we needed to introduce new arguments for dependent matrices.

\subsection{Preliminary results}
\label{subsub:prelim}

\begin{lemma}[\citep{catoni2017dimension}]
\label{lemma:catoni}
Assume that $X$ is a random variable defined in a measurable space $\left(\mathcal{X},\mathcal{A}\right)$, and $\left(\Theta,\mathcal{F}\right)$ is a measurable parameter space.
Let $\mu$ be a probability measure on $\left(\Theta,\mathcal{F}\right)$ and $h:\mathcal{X}\times\Theta\to\mathbb{R}$ be a real-valued $\mathcal{A}\otimes\mathcal{F}/\mathcal{B}\left(\mathbb{R}\right)$-measurable function such that $\Ep_{X}[\exp\left(h\left(X,\theta\right)\right)]<\infty$ for $\mu$-almost all $\theta$.
It holds that with probability at least $1-\exp(-t)$, for all probability measures $\rho\ll \mu$ simultaneously,
\begin{align*}
    \Ep_{\rho}[h\left(X,\theta\right)]\le \Ep_{\rho}\left[\log\Ep_{X}\left[\exp\left(h\left(X,\theta\right)\right)\right]\right]+ \mathrm{KL}\left(\rho \| \mu\right)+t.
\end{align*}
\end{lemma}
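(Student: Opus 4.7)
The plan is to prove the lemma via the classical PAC--Bayes duality trick: combine the Donsker--Varadhan variational formula for the Kullback--Leibler divergence with a single Markov inequality applied to a data-dependent exponential moment. The key design choice is to extract a nonnegative quantity $W(X)$ that does not depend on $\rho$ and whose expectation equals $1$, so that one tail event of probability at most $e^{-t}$ controls the bound uniformly over all $\rho \ll \mu$ simultaneously.

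First, I would invoke the dual (variational) form of the KL divergence: for any measurable $g:\Theta\to\R$ with $\Ep_\mu[\exp(g(\theta))]<\infty$, the inequality $\Ep_\rho[g(\theta)] \le \log \Ep_\mu[\exp(g(\theta))] + \mathrm{KL}(\rho\|\mu)$ holds for every probability measure $\rho \ll \mu$ (and the KL term is $+\infty$ otherwise, so the restriction to $\rho \ll \mu$ is harmless). I then specialize this to the data-dependent choice
\[ g_X(\theta) := h(X,\theta) - \log \Ep_X[\exp(h(X,\theta))], \]
which is well-defined by the integrability hypothesis on $h$. Rearranging the resulting bound yields, for every fixed realization of $X$ and every $\rho \ll \mu$,
\[ \Ep_\rho[h(X,\theta)] - \Ep_\rho\bigl[\log \Ep_X[\exp(h(X,\theta))]\bigr] \le \log W(X) + \mathrm{KL}(\rho\|\mu), \]
where
\[ W(X) := \Ep_\mu\!\left[\frac{\exp(h(X,\theta))}{\Ep_X[\exp(h(X,\theta))]}\right]. \]

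The second step is to control $W(X)$ in probability. Because the integrand is nonnegative and $(X,\theta) \mapsto h(X,\theta)$ is jointly measurable, Tonelli's theorem applies and gives
\[ \Ep_X[W(X)] = \Ep_\mu\!\left[\frac{\Ep_X[\exp(h(X,\theta))]}{\Ep_X[\exp(h(X,\theta))]}\right] = 1. \]
Markov's inequality then yields $\Pr(\log W(X) > t) = \Pr(W(X) > e^{t}) \le e^{-t}$. On the complementary event $\{\log W(X) \le t\}$, which has probability at least $1-e^{-t}$, the pointwise inequality above becomes the claimed bound, and it holds simultaneously for every $\rho \ll \mu$ precisely because the right-hand side decouples the $X$-dependent factor $\log W(X)$ from $\rho$ --- so the same tail event serves every choice of $\rho$ at once.

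The only genuine subtlety is the bookkeeping needed to ensure that $W$ is a bona fide random variable and that Tonelli's theorem applies; the joint $\mathcal{A}\otimes\mathcal{F}/\mathcal{B}(\R)$-measurability of $h$ and the integrability assumption $\Ep_X[\exp(h(X,\theta))] < \infty$ for $\mu$-a.e.\ $\theta$ are exactly what the hypotheses provide. I do not expect any obstacle beyond this: once the correct choice of $g_X$ is made, the proof is a direct application of the change-of-measure identity that underlies all PAC--Bayes bounds.
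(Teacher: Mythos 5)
Your proof is correct and follows essentially the same route as the paper: both rest on the Donsker--Varadhan change-of-measure identity applied to $g_X(\theta)=h(X,\theta)-\log\Ep_X[\exp(h(X,\theta))]$, Tonelli's theorem to show the resulting exponential moment equals $1$, and a single Markov inequality to get uniformity over all $\rho\ll\mu$. The only difference is presentational --- you state the variational formula first and then bound $\log W(X)$, whereas the paper folds the supremum over $\rho$ directly inside the exponential --- but the argument is the same.
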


\begin{proof}
The proof is merely a consequence of the duality relationship:
\begin{align*}
    &\Ep_{X}\left[\exp \left\{\sup_{\rho\ll\mu}\left(\Ep_{\rho}\left[h\left(X,\theta\right)-\log\Ep_{X}[\exp (h\left(X,\theta\right))]\right]-\mathrm{KL}\left(\rho \| \mu\right)\right) \right\} \right]\\
    &=\Ep_{X}\Ep_{\mu} \left[\exp\left(h\left(X,\theta\right)-\log\Ep_{X}[\exp (h\left(X,\theta\right))]\right) \right]\\
    &=\Ep_{X}\Ep_{\mu}\left[\frac{\exp ( h\left(X,\theta\right))}{\Ep_{X}[\exp (h\left(X,\theta\right))]}\right]\\
    &=\Ep_{\mu}\Ep_{X}\left[\frac{\exp ( h\left(X,\theta\right))}{\Ep_{X}[\exp (h\left(X,\theta\right))]}\right]\\
    &=1.
\end{align*}
We use Tonelli's theorem to exchange the order of expectations.
Then Markov's inequality leads to the inequality that holds with probability at least $1-\exp(-t)$;
\begin{align*}
    \sup_{\rho\ll\mu}\left(\Ep_{\rho}\left[h\left(X,\theta\right)-\log\Ep_{X}[\exp (h\left(X,\theta\right))]\right]-\mathrm{KL}\left(\rho \|\mu\right)\right)<t.
\end{align*}
This completes the proof.
\end{proof}

\begin{lemma}[Rio's version of Hoeffding's inequality~\citep{rio2000inegalites}, applied to matrices]
\label{lemma:rio}
Let $\left\{M_{1},\ldots,M_{n}\right\}$ be a sequence of positive semi-definite symmetric random matrices with $ \max_{\ell = 1,..,n}\left\|M_{\ell}\right\|\le \kappa^{2}$ almost surely for some $\kappa>0$. 
Let us assume that Assumption~\ref{asm:weakdep} is satisfied. Then, for any function $h\in {\rm Lip}_{n}(E,L) $ and for any $\lambda>0$ we have
\begin{align*}
&\mathbb{E} \left[ \exp\left( \lambda h(M_1,\dots,M_n)- \lambda \mathbb{E}[h(M_1,\dots,M_n)] \right) \right]
\\
&\leq \exp\left(\frac{\lambda^2 L^2 \sum_{\ell=1}^n \left( 2\kappa + \Gamma_{\ell,n} \right)^2 }{8}\right).    
\end{align*}
\end{lemma}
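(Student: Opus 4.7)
The plan is to adapt Rio's classical martingale-coupling proof of Hoeffding's inequality for weakly dependent sequences to the matrix-valued setting. Since the operator norm $\|\cdot\|$ appears in both the Lipschitz definition and in the boundedness hypothesis, the matrix structure enters only through this norm and the scalar-valued argument transfers verbatim. I would first form the Doob decomposition
\begin{align*}
h(\mathbf{M}) - \mathbb{E}[h(\mathbf{M})] = \sum_{\ell=1}^n d_\ell, \qquad d_\ell := \mathbb{E}[h(\mathbf{M})\mid \mathcal{F}_\ell] - \mathbb{E}[h(\mathbf{M})\mid \mathcal{F}_{\ell-1}],
\end{align*}
so that $(d_\ell)$ is an $(\mathcal{F}_\ell)$-martingale-difference sequence and $\mathbb{E}[\exp(\lambda \sum_\ell d_\ell)]$ can be bounded by iterating the tower property against pointwise estimates of each conditional MGF $\mathbb{E}[\exp(\lambda d_\ell) \mid \mathcal{F}_{\ell-1}]$.

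The heart of the proof is an a.s.\ range bound on $d_\ell$. I would enlarge the probability space to carry a copy $M'_\ell$ of $M_\ell$ that, conditionally on $\mathcal{F}_{\ell-1}$, is independent of $(M_\ell, \ldots, M_n)$ and has the same conditional law as $M_\ell$. Inserting the telescoping intermediate term $h(\mathbf{M}_{1:\ell-1}, M'_\ell, M_{\ell+1}, \ldots, M_n)$ splits $d_\ell$ into two contributions. The first is a single-coordinate swap at position $\ell$, whose magnitude is controlled by the $L$-Lipschitz property of $h$ combined with the uniform bound $\|M_\ell\|, \|M'_\ell\| \leq \kappa^2$, contributing a term of order $L\kappa$ at the scaling appropriate to the statement. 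The second is a tail contribution which, after conditioning on $\mathcal{F}_\ell$, is the difference between a conditional and an unconditional expectation of $g(a_{\ell+1}, \ldots, a_n) := h(\mathbf{M}_{1:\ell-1}, M'_\ell, a_{\ell+1}, \ldots, a_n)$, a function in ${\rm Lip}_{n-\ell}(\mathcal{S}, L)$ uniformly in its $\mathcal{F}_\ell$-measurable frozen arguments. Assumption~\ref{asm:weakdep} applied to $g/L$ then bounds this piece by $L \Gamma_{\ell,n}$.

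Summing the two contributions yields a deterministic a.s.\ range bound on $d_\ell$ proportional to $L(2\kappa + \Gamma_{\ell,n})/n$, with the $1/n$ coming from the normalization inherent to the way this lemma is applied in the proof of Theorem~\ref{thm:main}. Hoeffding's lemma applied to each conditional MGF then gives
\begin{align*}
\mathbb{E}[\exp(\lambda d_\ell) \mid \mathcal{F}_{\ell-1}] \leq \exp\left(\frac{\lambda^2 L^2 (2\kappa + \Gamma_{\ell,n})^2}{8 n^2}\right),
\end{align*}
and iterating the tower property over $\ell = n, n-1, \ldots, 1$ collapses the telescoped expectation into the advertised sub-Gaussian MGF bound.

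The main obstacle is the second step: correctly identifying which Lipschitz function of the tail coordinates to feed into Assumption~\ref{asm:weakdep} and verifying that its Lipschitz constant with respect to the product $\ell^1$-norm on $\mathcal{S}^{n-\ell}$ is indeed $L$, uniformly over the values of the $\mathcal{F}_\ell$-measurable arguments frozen inside $g$. The coupling with the conditionally independent copy $M'_\ell$ is precisely the device that disentangles the contribution of the current coordinate, which is controlled by $\|M_\ell\| \leq \kappa^2$, from the contribution of the tail, which is controlled by $\Gamma_{\ell,n}$; getting the two contributions and the renormalization to assemble into exactly the constant $(2\kappa + \Gamma_{\ell,n})$ displayed in the statement is the delicate bookkeeping.
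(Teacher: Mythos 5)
The paper does not actually prove Lemma~\ref{lemma:rio}: it is quoted from \citet{rio2000inegalites}, so the only fair comparison is with Rio's own argument, and your strategy --- Doob decomposition $h(\mathbf{M})-\mathbb{E}[h(\mathbf{M})]=\sum_\ell d_\ell$, an almost-sure bound on the conditional range of each increment obtained by splitting a coordinate swap (controlled by $\|M_\ell\|\le\kappa^2$) from a tail term (controlled by Assumption~\ref{asm:weakdep}), then Hoeffding's lemma and the tower property --- is exactly that argument. The coupling with a conditionally independent copy $M'_\ell$ is an acceptable, if not strictly necessary, device: one can equivalently bound the oscillation of $m\mapsto\mathbb{E}[h(M_{1},\dots,M_{\ell-1},m,M_{\ell+1},\dots,M_n)\mid\mathcal{F}_{\ell-1},M_\ell=m]$ directly.

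The genuine gap is in the constant tracking, which you explicitly defer as ``delicate bookkeeping'' but which is where the entire content of the lemma lies. First, the factor $1/n$ in your claimed range bound $|d_\ell|\lesssim L(2\kappa+\Gamma_{\ell,n})/n$ cannot be derived: the lemma is stated for an arbitrary $h\in{\rm Lip}_n(E,L)$, and for such $h$ the conditional range of $d_\ell$ carries no $n$-dependence whatsoever; justifying the $1/n$ by ``the normalization inherent to the way this lemma is applied in the proof of Theorem~\ref{thm:main}'' is circular, not a proof. (In fact the invocation inside the proof of Theorem~\ref{thm:main} is consistent with the bound $\exp\bigl(\lambda^2L^2\sum_\ell(2\kappa^2+\Gamma_{\ell,n})^2/8\bigr)$, i.e.\ with no $n^2$ in the denominator and with $\kappa^2$ rather than $\kappa$; the displayed statement appears to contain typos, and your sketch inherits them rather than resolving them.) Second, the coordinate swap changes $h$ by at most $L\|M_\ell-M'_\ell\|\le 2L\kappa^2$, not $O(L\kappa)$, since the hypothesis bounds $\|M_\ell\|$ by $\kappa^2$. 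Third, the honest accounting of the tail term produces \emph{two} applications of Assumption~\ref{asm:weakdep} per increment (one when passing from the conditional to the unconditional tail expectation at step $\ell$, and one hidden in the centering $\mathbb{E}[\,\cdot\mid\mathcal{F}_{\ell-1}]$), so the natural range bound is $2L(\kappa^2+\Gamma_{\ell,n})$ rather than $L(2\kappa^2+\Gamma_{\ell,n})$; if you want the sharper constant you must say how to avoid the doubling. Finally, Assumption~\ref{asm:weakdep} is stated for deterministic $g$, so applying it to a $g$ whose frozen arguments $M_1,\dots,M_{\ell-1},M'_\ell$ are random requires at least a remark on measurable selection or an almost-sure-simultaneous version of the assumption.
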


\subsection{Bounded Case (Theorem~\ref{thm:main})}
\label{subsub:proof:main}

\begin{proof}[Proof of Theorem~\ref{thm:main}]
The proof consists of truncation of $\rho=\rho_{u,v}$ given by \citep{zhivotovskiy2021dimension} and the lemma above obtained using duality.

(Step 1) 
Let us assume that $\Sigma$ is invertible. Otherwise, we only need to consider a lower-dimensional subspace, and the proof is similar to the case with invertible $\Sigma$.
Let $\mu$ denote a $2p$-dimensional product measure of two $p$-dimensional Gaussian measures with a zero mean and covariance $\left(2\mathbf{r}\left(\Sigma\right)\right)^{-1}\Sigma$.
We define $\mathbb{S}^{p-1}$ as the unit ball in $\mathbb{R}^{p}$.
Let us set $u,v\in\Sigma^{1/2}\mathbb{S}^{p-1}$ and define $f_{u},f_{v}$ as probability density functions with respect to the Lebesgue measure such that
\begin{align*}
    f_{u}\left(x\right)&=\frac{\exp\left(-\mathbf{r}\left(\Sigma\right)\left(x-u\right)^{\top}\Sigma^{-1}\left(x-u\right)\right)\mathbf{1}\{\left\|x-u\right\|\le \sqrt{\left\|\Sigma\right\|}\}}{ \int \exp\left(-\mathbf{r}\left(\Sigma\right)\left(x'-u\right)^{\top}\Sigma^{-1}\left(x'-u\right)\right)\mathbf{1}\{\left\|x'-u\right\|\le \sqrt{\left\|\Sigma\right\|}\} \mathrm{d}x'},\\
    f_{v}\left(x\right)&=\frac{\exp\left(-\mathbf{r}\left(\Sigma\right)\left(x-u\right)^{\top}\Sigma^{-1}\left(x-v\right)\right)\mathbf{1}\{\left\|x-v\right\|\le \sqrt{\left\|\Sigma\right\|}\}}{ \int \exp\left(-\mathbf{r}\left(\Sigma\right)\left(x'-v\right)^{\top}\Sigma^{-1}\left(x'-v\right)\right)\mathbf{1}\{\left\|x'-v\right\|\le \sqrt{\left\|\Sigma\right\|}\} \mathrm{d}x'}.
\end{align*}
Here, $\mathbf{1}\{\mE\}$ is an indicator function which is $1$ if an event $\mE$ is true and $0$ otherwise.
Assume that the independent random vectors $\theta,\eta$ have densities $f_{u}$ and $f_{v}$.
{\rc Note that} $\Ep[\left(\theta,\eta\right)]=\left(u,v\right)$ by the symmetricity of $f_{u},f_{v}$, and $\max\left\{\left\|\theta\right\|,\left\|\eta\right\|\right\}\le 2\sqrt{\left\|\Sigma\right\|}$ almost surely.
Let $\rho_{u,v}$ be a probability measure of $\theta,\eta$ given by $\rho_{u,v}(\mathrm{d}x,\mathrm{d}y)=f_{u}\left(x\right)f_{v}\left(y\right)\mathrm{d}x\mathrm{d}y,x,y\in\mathbb{R}^{d}$. 
In the proof of Theorem 1 of \citep{zhivotovskiy2021dimension},
it is shown that
\begin{align*}
    \mathrm{KL}\left(\rho_{u,v} \| \mu\right)\le 2\log2+2\mathbf{r}\left(\Sigma\right).
\end{align*}

(Step 2)
Let $f\left(A,\theta,\eta\right):=\theta^{\top}A\eta$ for any $A\in\mathbb{R}^{p}\otimes\mathbb{R}^{p}$ and $\theta,\eta\in\mathbb{R}^{p}$.
Lemma~\ref{lemma:rio} with $h(M_1,\dots,M_n) = \sum_{\ell=1}^n f(M_\ell,\theta,\eta)$ gives that for any $\lambda>0$,
\begin{align*}
    &\Ep_{\mathbf{M}}\left[\exp\left(\lambda\sum_{\ell=1}^{n}f\left(M_{\ell},\theta,\eta\right)\right)\right]\\
    &=\Ep_{\mathbf{M}}\left[\exp\left(\lambda\sum_{\ell=1}^{n}\theta^{\top}M_{\ell}\eta\right)\right]\\
    &\le \exp\left(n\lambda \theta^{\top}\Sigma \eta+\frac{\lambda^{2}\left\|\theta\right\|^{2}\left\|\eta\right\|^{2}{ n}\left(2\kappa^{2}+{ 2}\Gamma_{n}\right)^{2}}{{8}}\right),
    \end{align*}
because for any $A_{1},\ldots,A_{n},B_{1},\ldots,B_{n}\in\mathbb{R}^{p}\otimes\mathbb{R}^{p}$,
\begin{align*}
    \abs{h\left(A_{1},\dots,A_{n}\right)-h\left(B_{1},\dots,B_{n}\right)}&=\bigabs{\theta^{\top}\left(\sum_{\ell=1}^{n}\left(A_{\ell}-B_{\ell}\right)\right)\eta}\\
    &\le \left\|\theta\right\|\left\|\eta\right\|\sum_{\ell=1}^{n}\left\|A_{\ell}-B_{\ell}\right\|.
\end{align*}
It holds that
\begin{align*}
    &\frac{1}{n}\Ep_{\rho_{u,v}}\left[\log\Ep_{\mathbf{M}}\left[\exp\left(\lambda\sum_{\ell=1}^{n}f\left(M_{\ell},\theta,\eta\right)\right)\right]\right]\\
    &\le \frac{1}{n}\Ep_{\rho_{u,v}}\left[n\lambda \theta^{\top}\Sigma \eta+\frac{\lambda^{2}\left\|\theta\right\|^{2}\left\|\eta\right\|^{2}n\left(\kappa^{2}+\Gamma_{n}\right)^{2}}{2}\right]\\
    &=\Ep_{\rho_{u,v}}\left[\lambda \theta^{\top}\Sigma \eta+\frac{\lambda^{2}\left\|\theta\right\|^{2}\left\|\eta\right\|^{2}\left(\kappa^{2}+\Gamma_{n}\right)^{2}}{2}\right]\\
    &\le \lambda u^{\top}\Sigma v+\frac{\lambda^{2}\left(2\sqrt{\left\|\Sigma\right\|}\right)^{4}\left(\kappa^{2}+\Gamma_{n}\right)^{2}}{2}\\
    &=\lambda u^{\top}\Sigma v+8\lambda^{2}\left\|\Sigma\right\|^{2}\left(\kappa^{2}+\Gamma_{n}\right)^{2}.
\end{align*}
The last inequality comes from the fact that $\max\left\{\left\|\theta\right\|,\left\|\eta\right\|\right\}\le 2\sqrt{\left\|\Sigma\right\|}$.
Therefore, from Lemma~\ref{lemma:catoni} with $h\left(M_{1},\ldots,M_{n},\theta,\eta\right)=\lambda\sum_{\ell=1}^{n}f\left(M_{\ell},\theta,\eta\right)$ and the fact that $\log2\le\mathbf{r}\left(\Sigma\right)$ for any $\Sigma$, we obtain
\begin{align*}
    \frac{1}{n}\sum_{\ell=1}^{n}\lambda u^{\top}M_{\ell}v\le \lambda u^{\top}\Sigma v+8\lambda^{2}\left\|\Sigma\right\|^{2}\left(\kappa^{2}+\Gamma_{n}\right)^{2}+\frac{4\mathbf{r}\left(\Sigma\right)+t}{n},
\end{align*}
simultaneously for all $u,v$ with probability at least $1-\exp(-t)$.
By choosing 
\begin{align*}
    \lambda=\sqrt{\frac{4\mathbf{r}\left(\Sigma\right)+t}{8n\left\|\Sigma\right\|^{2}\left(\kappa^{2}+\Gamma_{n}\right)^{2}}},
\end{align*}
we obtain
\begin{align*}
    \left\|\frac{1}{n}\sum_{\ell=1}^{n} M_{\ell}-\Sigma\right\|\le4\sqrt{2}\left\|\Sigma\right\|\left(\kappa^{2}+\Gamma_{n}\right)\sqrt{\frac{4\mathbf{r}\left(\Sigma\right)+t}{n}}.
\end{align*}
This is our claim.
\end{proof}

\subsection{Heavy-Tailed Case (Corollary \ref{cor:standard})}
\label{subsub:truncation}

We first present a truncation function, which is necessary to our robustification strategy for heavy-tailed random matrices.

\begin{definition}
\label{dfn:psi:tau}
For any $\tau>0$, we define the truncation function $\psi_\tau:\mathbb{R}\rightarrow\mathbb{R}$ as follows:
$$ \psi_\tau( x) = \left\{ 
\begin{array}{l}
 -\tau \text{ if } x<\tau, \\
 x \text{ if } {\abs{x}}\leq \tau, \\
 \tau \text{ if } x>\tau. 
\end{array}
\right.
$$
\end{definition}
There is a standard method for extending a real function $\mathbb{R}\rightarrow\mathbb{R}$ to a function of symmetric matrices $\mathcal{S}\rightarrow\mathcal{S}$, by applying the function to the eigenvalues of the matrix. Specifically, given $A\in\mathcal{S}$, $A$ can be written as
$$ A = Q \left(
\begin{array}{c c c}
 \lambda_1 & \dots & 0 \\
 \vdots & \ddots & \vdots \\
 0 & \dots & \lambda_p
\end{array}
\right)  Q^T,
$$
for some matrix $Q$ such that $Q Q^T = I$, where $(\lambda_1,\dots,\lambda_p)$ are the eigenvalues of $A$. We then define $\psi_\tau(A)$ by
$$
\psi_{\tau}(A)
=Q \left(
\begin{array}{c c c}
 \psi_\tau(\lambda_1) & \dots & 0 \\
 \vdots & \ddots & \vdots \\
 0 & \dots & \psi_\tau(\lambda_p)
\end{array}
\right)  Q^T.
$$
We can now state the first corollary of Theorem~\ref{thm:main}.
\begin{corollary}
\label{cor:truncated:1}
Assume that $\left\{M_{1},\ldots,M_{n}\right\}$ satisfies Assumption~\ref{asm:weakdep}. Fix $\tau>0$. Then for all $t>0$, with probability at least $1-\exp(-t)$ it holds that
\begin{align*}
    \left\|\frac{1}{n}\sum_{\ell=1}^{n} ( \psi_\tau(M_{\ell})- \mathbb{E}[\psi_\tau(M_{\ell})] ) \right\|\le{4}\sqrt{2}\left\|\Sigma\right\|\left({\tau+\Gamma_{n}}\right)\sqrt{\frac{4\mathbf{r}\left(\Sigma\right)+t}{n}}  .
\end{align*}
\end{corollary}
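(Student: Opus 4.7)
The plan is to apply Theorem~\ref{thm:main} to the truncated sequence $\{\psi_\tau(M_1),\ldots,\psi_\tau(M_n)\}$ with the bound parameter $\kappa^2=\tau$, and then to translate the resulting bound—which naturally involves $\Sigma_\tau:=\mathbb{E}[\psi_\tau(M_\ell)]$—into a statement expressed in terms of $\Sigma$.

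First I would verify that the truncated sequence satisfies the three hypotheses of Theorem~\ref{thm:main}. Since each $M_\ell$ is PSD symmetric, its eigenvalues are nonnegative, and the spectral action of $\psi_\tau$ replaces each eigenvalue $\lambda_i\ge 0$ by $\min(\lambda_i,\tau)\ge 0$ while preserving the eigenvectors; hence $\psi_\tau(M_\ell)$ is symmetric PSD with $\|\psi_\tau(M_\ell)\|\le\tau$. For the dependence structure, I would invoke Proposition~\ref{proposition:lipschitz:dependence}, which reduces the matter to showing that $M\mapsto\psi_\tau(M)$ is $1$-Lipschitz in operator norm on $\mathcal{S}$. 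This is a classical consequence of the fact that the scalar map $\psi_\tau:\mathbb{R}\to\mathbb{R}$ is $1$-Lipschitz, extended to symmetric operators through the spectral calculus; it can be justified either by smoothing $\psi_\tau$ and applying the Daleckii--Krein representation, or by a direct argument exploiting the variational characterisation of the operator norm.

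Applying Theorem~\ref{thm:main} to $(\psi_\tau(M_\ell))_{\ell=1}^n$ then yields, with probability at least $1-e^{-t}$,
\begin{align*}
\left\|\frac{1}{n}\sum_{\ell=1}^{n}\bigl(\psi_\tau(M_\ell)-\mathbb{E}[\psi_\tau(M_\ell)]\bigr)\right\|
\le 2\sqrt{2}\,\|\Sigma_\tau\|\,(2\tau+\Gamma_n)\sqrt{\frac{4\mathbf{r}(\Sigma_\tau)+t}{n}}.
\end{align*}
To reach the stated bound, I still need to replace $\|\Sigma_\tau\|$ and $\mathbf{r}(\Sigma_\tau)$ by $\|\Sigma\|$ and $\mathbf{r}(\Sigma)$. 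Because $\psi_\tau(M_\ell)\preceq M_\ell$ in the L\"owner order (both PSD, sharing eigenvectors, with pointwise smaller eigenvalues), taking expectations gives $\Sigma_\tau\preceq\Sigma$, hence $\|\Sigma_\tau\|\le\|\Sigma\|$ and $\mathrm{Tr}(\Sigma_\tau)\le\mathrm{Tr}(\Sigma)$. A direct computation then yields
\begin{align*}
\|\Sigma_\tau\|^2\bigl(4\mathbf{r}(\Sigma_\tau)+t\bigr)
= 4\|\Sigma_\tau\|\,\mathrm{Tr}(\Sigma_\tau)+t\,\|\Sigma_\tau\|^2
\le 4\|\Sigma\|\,\mathrm{Tr}(\Sigma)+t\,\|\Sigma\|^2
= \|\Sigma\|^2\bigl(4\mathbf{r}(\Sigma)+t\bigr),
\end{align*}
and extracting square roots finishes the proof.

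The main technical obstacle is the operator-norm Lipschitz property of the spectral map $\psi_\tau$: although intuitively obvious from the scalar Lipschitz bound, it genuinely requires a spectral-theoretic argument since two distinct symmetric operators need not share eigenvectors. Everything else is essentially a bookkeeping exercise built on the L\"owner-monotonicity of the spectral truncation and the scaling already present in Theorem~\ref{thm:main}.
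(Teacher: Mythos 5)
Your proof follows the same route as the paper's: truncate, transfer Assumption~\ref{asm:weakdep} to the truncated sequence via Proposition~\ref{proposition:lipschitz:dependence}, and apply Theorem~\ref{thm:main}. You are in fact more careful than the paper on two points. First, the paper applies Theorem~\ref{thm:main} to the \emph{centered} matrices $\psi_\tau(M_\ell)-\mathbb{E}[\psi_\tau(M_\ell)]$, which are neither positive semi-definite nor endowed with a nonzero common mean, so the theorem as stated does not literally apply there; your application to the uncentered, PSD matrices $\psi_\tau(M_\ell)$ with mean $\Sigma_\tau$ is the correct reading. Second, the paper silently replaces $\|\Sigma_\tau\|$ and $\mathbf{r}(\Sigma_\tau)$ by $\|\Sigma\|$ and $\mathbf{r}(\Sigma)$ in the final bound; your L\"owner-monotonicity argument ($\psi_\tau(M_\ell)\preceq M_\ell$, hence $\Sigma_\tau\preceq\Sigma$, hence $4\|\Sigma_\tau\|\mathrm{Tr}(\Sigma_\tau)+t\|\Sigma_\tau\|^2\le 4\|\Sigma\|\mathrm{Tr}(\Sigma)+t\|\Sigma\|^2$) is exactly the justification the paper omits, and it is needed because $\mathbf{r}(\Sigma_\tau)$ alone need not be smaller than $\mathbf{r}(\Sigma)$.

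One caveat on the step you rightly single out as the crux: the claim that the spectral map $M\mapsto\psi_\tau(M)$ is $1$-Lipschitz for the \emph{operator} norm is genuinely delicate, and neither of your proposed justifications would deliver it with constant $1$. Operator-Lipschitz functions must be differentiable, whereas $\psi_\tau$ has a kink at $\tau$; writing $\min(x,\tau)=\tfrac{1}{2}\left(x+\tau-|x-\tau|\right)$ reduces the question to the absolute value, for which the operator-norm Lipschitz constant grows like $\log p$ (and the Daleckii--Krein route fails for the same reason, the divided-difference kernel not being a bounded Schur multiplier). The paper asserts this step with no more justification than you give, so this is a shared weakness rather than a defect specific to your argument; a clean repair is to carry out the whole dependence argument with the Frobenius norm on $\mathcal{S}$ (for which spectral truncation \emph{is} $1$-Lipschitz, and for which the function $h$ used in the proof of Theorem~\ref{thm:main} is still $\|\theta\|\|\eta\|$-Lipschitz), or to accept a logarithmic dimension dependence at this step.
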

\begin{proof}[Proof of Corollary~\ref{cor:truncated:1}]
Because $x\mapsto \psi_\tau(x)$ is $1$-Lipschitz, the sequence of matrices $\{\psi_\tau(M_1)-\mathbb{E}[\psi_\tau(M_1)],\dots,\psi_\tau(M_n) -\mathbb{E}[\psi_\tau(M_n)] \}$ satisfies Assumption~\ref{asm:weakdep}. As they are bounded by $\tau$ and all have the same expectation (zero), therefore, we can apply Theorem~\ref{thm:main} to yield the result.
\end{proof}
As stated in the outline of the proof, we now have to understand the difference between the expectation of the truncated matrices and the expectations of the (non-truncated) matrices themselves.
\begin{proposition}
\label{prop:trunc:exp}
Fix $\tau > 0$.
Under Assumption~\ref{asm:tailbound}, we have
$$ \max_{1 \leq \ell \leq n}\left\| \mathbb{E}[\psi_\tau(M_\ell)] - \mathbb{E}[M_\ell] \right\| \leq G(\tau). $$
\end{proposition}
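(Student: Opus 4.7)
The plan is to bound the difference operator $M_\ell - \psi_\tau(M_\ell)$ pointwise in $\omega$ and then integrate using Cauchy--Schwarz. The key observation is that $\psi_\tau$ acts on the spectrum of $M_\ell$, so if $M_\ell$ has the spectral decomposition $Q\,\mathrm{diag}(\lambda_1,\dots,\lambda_p)\,Q^\top$, then $M_\ell - \psi_\tau(M_\ell)$ is diagonalized in the same basis, with eigenvalues $\lambda_i - \psi_\tau(\lambda_i)$. Since $\lambda_i - \psi_\tau(\lambda_i) = 0$ whenever $|\lambda_i|\le \tau$ and $|\lambda_i - \psi_\tau(\lambda_i)| = |\lambda_i|-\tau \le |\lambda_i|$ otherwise, I obtain two pointwise facts: $M_\ell - \psi_\tau(M_\ell) = 0$ on the event $\{\|M_\ell\|\le \tau\}$, and $\|M_\ell - \psi_\tau(M_\ell)\| \le \|M_\ell\|$ always.

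Combining these observations with the Jensen-type inequality $\|\mathbb{E}[X]\|\le \mathbb{E}[\|X\|]$ for the convex operator norm yields
\[
\|\mathbb{E}[\psi_\tau(M_\ell)] - \mathbb{E}[M_\ell]\| \le \mathbb{E}\bigl[\|M_\ell - \psi_\tau(M_\ell)\|\,\mathbf{1}\{\|M_\ell\|>\tau\}\bigr] \le \mathbb{E}\bigl[\|M_\ell\|\,\mathbf{1}\{\|M_\ell\|>\tau\}\bigr].
\]
Applying Cauchy--Schwarz to factor this last expectation as $\sqrt{\mathbb{E}[\|M_\ell\|^2]}\cdot\sqrt{\mathbb{P}(\|M_\ell\|>\tau)}$ and invoking Assumption~\ref{asm:tailbound} to bound the two factors by $V$ and $\sqrt{F(\tau)}$ respectively would give the claim, uniformly in $\ell$. (I read the $\sqrt{F(t)}$ on the right-hand side of the statement as a typo for $\sqrt{F(\tau)}$, consistent with the quantity fixed at the start of the proposition.)

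The argument presents no real obstacle; the only mildly subtle point is the inequality $\|M_\ell - \psi_\tau(M_\ell)\| \le \|M_\ell\|$, which follows immediately from the simultaneous diagonalizability of $M_\ell$ and $\psi_\tau(M_\ell)$ in the basis $Q$ together with the elementary scalar bound $|\lambda - \psi_\tau(\lambda)|\le |\lambda|$. Note also that the $L^2$ integrability assumed in Assumption~\ref{asm:tailbound} is precisely what makes $M_\ell$ Bochner-integrable in $\mathcal{S}$, so all the expectations above are well-defined even in the operator-valued setting needed for Theorem~\ref{thm:operator}.
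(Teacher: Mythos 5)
Your proof is correct and follows essentially the same route as the paper's: bound $\left\|\mathbb{E}[\psi_\tau(M_\ell)]-\mathbb{E}[M_\ell]\right\|$ by $\mathbb{E}\left[\left\|\psi_\tau(M_\ell)-M_\ell\right\|\right]$ via Jensen, use the spectral observation that this difference vanishes on $\{\|M_\ell\|\le\tau\}$ and is bounded by $\|M_\ell\|$ otherwise, then apply Cauchy--Schwarz together with Assumption~\ref{asm:tailbound}. You are also right that the $\sqrt{F(t)}$ in the statement is a typo for $\sqrt{F(\tau)}$, which is what the paper's own proof actually establishes.
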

\begin{proof}[Proof of Proposition~\ref{prop:trunc:exp}]
For any $\ell = 1,\ldots,n$, we have
\begin{align*}
    \left\| \mathbb{E}[\psi_\tau(M_\ell)] - \mathbb{E}[M_\ell] \right\|
    & \leq \mathbb{E} [\left\| \psi_\tau(M_\ell)-M_\ell \right\|]
    \\
    & = \mathbb{E} [(\|M_\ell\|-\tau)\mathbf{1}_{\{\|M_\ell\|-\tau>0\}}]
    \\
    & \leq \int_{0}^{\infty} (u-\tau) \mathbf{1}_{\{u-\tau>0\}} {\rm d} \mathbb{P}(\|M_\ell\|-\tau\leq u)
    \\
    & = - \int_{\tau}^{\infty} (u-\tau) {\rm d} \mathbb{P}(\|M_\ell\|-\tau > u)
    \\
    & = \left[ - (u-\tau) \mathbb{P}(\|M_\ell\|-\tau > u)  \right]_{\tau}^{\infty} 
    \\
    & \quad \quad + \int_{\tau}^{\infty}  \mathbb{P}(\|M_\ell\|-\tau> u) du.
\end{align*}
The first term is null as {\rc $x \Surv_{\ell}(x)=\int_{0}^{\infty}\mathbf{1}_{\{u\le x\}}\Surv_{\ell}(x) du\le \int_{0}^{\infty}\Surv_{\ell}(u)du$ and the dominated convergence theorem gives $x \Surv_{\ell}(x)\to0$ as $x\to\infty$}, and thus
\begin{align*}
    \left\| \mathbb{E}[\psi_\tau(M_\ell)] - \mathbb{E}[M_\ell] \right\|
    & \leq \int_{\tau}^{\infty}  \mathbb{P}(\|M_\ell\|-\tau> u) du
    \\
    & \leq \int_{\tau}^{\infty}  \mathbb{P}(\|M_\ell\|> u) du
    \\
    & = \int_{\tau}^{\infty} \Surv_{\ell}(u) du .
\end{align*}
which ends the proof.
\end{proof}
\begin{corollary}
\label{cor:trunc:2}
Assume that $\{M_1,\dots,M_n\}$ satisfies Assumptions~\ref{asm:weakdep} and~\ref{asm:tailbound}, and $\mathbb{E}[M_\ell ]=\Sigma$. Fix $\tau>0$. For all $t>0$, with probability at least $1-\exp(-t)$ it holds that
\begin{align*}
    \left\|\frac{1}{n}\sum_{\ell=1}^{n} \psi_\tau(M_{\ell})-\Sigma\right\|\le{4}\sqrt{2}\left\|\Sigma\right\|\left({\tau+\Gamma_{n}}\right)\sqrt{\frac{4\mathbf{r}\left(\Sigma\right)+t}{n}} + G(\tau).
\end{align*}
\end{corollary}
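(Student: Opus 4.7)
The plan is to combine the two preceding results, Corollary~\ref{cor:truncated:1} and Proposition~\ref{prop:trunc:exp}, via a single triangle inequality. Specifically, I would write
\begin{align*}
\left\|\frac{1}{n}\sum_{\ell=1}^{n} \psi_\tau(M_{\ell})-\Sigma\right\|
& \leq \left\|\frac{1}{n}\sum_{\ell=1}^{n} \bigl(\psi_\tau(M_{\ell})-\mathbb{E}[\psi_\tau(M_\ell)]\bigr)\right\| \\
& \quad + \left\|\frac{1}{n}\sum_{\ell=1}^{n}\bigl(\mathbb{E}[\psi_\tau(M_\ell)] - \mathbb{E}[M_\ell]\bigr)\right\|,
\end{align*}
using that $\Sigma=\mathbb{E}[M_\ell]$ for every $\ell$.

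First, I would bound the first (stochastic) term directly by Corollary~\ref{cor:truncated:1}, which gives exactly the factor $2\sqrt{2}\|\Sigma\|(2\tau+\Gamma_n)\sqrt{(4\mathbf{r}(\Sigma)+t)/n}$ with probability at least $1-e^{-t}$. Then for the second (bias) term, I would apply the triangle inequality once more to pull the norm inside the sum, so that each summand is controlled by Proposition~\ref{prop:trunc:exp}, yielding the bound $V\sqrt{F(\tau)}$ uniformly over $\ell$ and therefore also for the average.

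Adding the two bounds gives the claimed inequality. The only event on which we condition is the one from Corollary~\ref{cor:truncated:1}, which has probability at least $1-e^{-t}$; the bias bound is deterministic, so no union bound or extra probability budget is needed here. This is essentially a bookkeeping step, and there is no real obstacle: the heavy-tail contribution $nF(\tau)$ that appears in the final statement of Corollary~\ref{cor:standard} does \emph{not} enter this intermediate corollary, because we are still working with the truncated random matrices $\psi_\tau(M_\ell)$, which are almost surely bounded by $\tau$. The probability loss associated with the event $\{\|M_\ell\|>\tau\}$ will only appear in the subsequent step where we replace $\psi_\tau(M_\ell)$ by $M_\ell$ itself to deduce Corollary~\ref{cor:standard}.
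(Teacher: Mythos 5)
Your proposal is correct and follows essentially the same route as the paper: a triangle-inequality decomposition into the centered truncated sum (bounded by Corollary~\ref{cor:truncated:1} on an event of probability at least $1-e^{-t}$) plus the deterministic truncation bias (bounded termwise by Proposition~\ref{prop:trunc:exp}). Your remark that the $nF(\tau)$ probability loss only enters later, when replacing $\psi_\tau(M_\ell)$ by $M_\ell$, matches the paper's treatment exactly.
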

\begin{proof}[Proof of Corollary~\ref{cor:trunc:2}]
First, we decompose the norm as
\begin{align*}
 &\left\|\frac{1}{n}\sum_{\ell=1}^{n} \psi_\tau(M_{\ell})-\Sigma\right\|\\
 & \leq  \left\|\frac{1}{n}\sum_{\ell=1}^{n} (\psi_\tau(M_{\ell})- \mathbb{E}[\psi_\tau(M_{\ell})] )\right\| +  \left\|\frac{1}{n}\sum_{\ell=1}^{n} (\mathbb{E}[\psi_\tau(M_{\ell})] - \mathbb{E}[M_{\ell} ])\right\|
 \\
 & \leq  \left\|\frac{1}{n}\sum_{\ell=1}^{n} [\psi_\tau(M_{\ell})- \mathbb{E}[\psi_\tau(M_{\ell})] ]\right\| + \frac{1}{n}\sum_{\ell=1}^{n} \left\| \mathbb{E}[\psi_\tau(M_{\ell})] - \Sigma \right\|,
\end{align*}
where we use the triangle inequality in the first line, and Jensen's inequality and $\mathbb{E}[M_{\ell} ]=\Sigma$ in the second line. As Assumption~\ref{asm:weakdep} is satisfied, we can upper bound the first term with probability $1-\exp(-t)$ by Corollary~\ref{cor:truncated:1} together with Proposition \ref{proposition:lipschitz:dependence}. Because Assumption~\ref{asm:tailbound} is also satisfied, we can bound the second term using Proposition~\ref{prop:trunc:exp}. 
\end{proof}

Note that Corollary~\ref{cor:trunc:2} already provides an estimation result for $\Sigma$ when matrices $M_\ell$ are unbounded. However, in contrast to Corollary~\ref{cor:standard}, not only does the bound depend on $\tau$ but the estimator $\frac{1}{n}\sum_{\ell=1}^{n} \psi_\tau(M_{\ell})$ does as well. A mistake in the choice of $\tau$ can lead to poor estimation in practice.

To control the distance between this estimator $\frac{1}{n}\sum_{\ell=1}^{n} \psi_\tau(M_{\ell})$ and the standard estimator $\frac{1}{n}\sum_{\ell=1}^{n} M_\ell$, we prove the following proposition.
 \begin{proposition}
 \label{prop:trunc:estimator}
 Under Assumption~\ref{asm:tailbound}, we have
 $$ \mathbb{P}\left( \left\| \frac{1}{n}\sum_{\ell=1}^n \psi_\tau(M_\ell) - \frac{1}{n}\sum_{\ell=1}^n M_\ell \right\| \neq 0 \right) \leq \sum_{\ell=1}^n \Surv_\ell(t) .$$
 \end{proposition}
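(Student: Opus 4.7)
The plan is to show that the event $\{\tfrac{1}{n}\sum_{\ell=1}^n \psi_\tau(M_\ell) \neq \tfrac{1}{n}\sum_{\ell=1}^n M_\ell\}$ is contained in the event $\bigcup_{\ell=1}^n \{\|M_\ell\| > \tau\}$, and then conclude by a union bound together with Assumption~\ref{asm:tailbound}.

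First I would argue pointwise: if $\|M_\ell\| \leq \tau$, then every eigenvalue $\lambda_j(M_\ell)$ satisfies $|\lambda_j(M_\ell)| \leq \|M_\ell\| \leq \tau$, so by Definition~\ref{dfn:psi:tau}, $\psi_\tau(\lambda_j(M_\ell)) = \lambda_j(M_\ell)$ for every $j$. Plugging into the spectral definition of $\psi_\tau(M_\ell)$ shows $\psi_\tau(M_\ell) = M_\ell$. Therefore, on the event $A := \bigcap_{\ell=1}^n \{\|M_\ell\| \leq \tau\}$, we have $\psi_\tau(M_\ell) = M_\ell$ for every $\ell$, so the averages coincide and the norm of their difference is exactly zero.

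Taking complements, the event that the norm of the difference is nonzero is contained in $A^c = \bigcup_{\ell=1}^n \{\|M_\ell\| > \tau\}$. A simple union bound combined with Assumption~\ref{asm:tailbound} then gives
\begin{align*}
\mathbb{P}\!\left( \left\| \tfrac{1}{n}\sum_{\ell=1}^n \psi_\tau(M_\ell) - \tfrac{1}{n}\sum_{\ell=1}^n M_\ell \right\| \neq 0 \right)
\leq \sum_{\ell=1}^n \mathbb{P}(\|M_\ell\| > \tau) \leq n F(\tau),
\end{align*}
which is the claim.

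There is no real obstacle here: the only subtlety is checking that $\psi_\tau$, as extended to symmetric operators through its action on eigenvalues, acts as the identity as soon as the operator norm (which coincides with the maximum absolute eigenvalue for symmetric operators) is bounded by $\tau$. This is immediate from the definition but worth stating explicitly. The argument extends verbatim to the infinite-dimensional Hilbert space setting via the spectral theorem for bounded self-adjoint operators, so no additional care is needed there.
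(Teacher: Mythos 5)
Your proposal is correct and follows essentially the same route as the paper: both arguments reduce the event that the averaged difference is nonzero to the event that $\psi_\tau(M_\ell)\neq M_\ell$ for some $\ell$, observe that this forces $\|M_\ell\|>\tau$, and conclude by a union bound with Assumption~\ref{asm:tailbound}. Your explicit spectral verification that $\psi_\tau(M_\ell)=M_\ell$ whenever $\|M_\ell\|\le\tau$ is the same fact the paper uses implicitly in its final step.
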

\begin{proof}[Proof of Proposition~\ref{prop:trunc:estimator}]
We have
\begin{align*}
    &\mathbb{P}\left( \left\| \frac{1}{n}\sum_{\ell=1}^n \psi_\tau(M_\ell) - \frac{1}{n}\sum_{\ell=1}^n M_\ell \right\| \neq 0 \right)
    \\&
    \leq
        \mathbb{P}\left(  \frac{1}{n}\sum_{\ell=1}^n\left\| \psi_\tau(M_\ell) - M_\ell \right\| > 0 \right)
    \\
& = \mathbb{P}\left( \exists \ell: \left\| \psi_\tau(M_\ell) - M_\ell \right\| > 0 \right)
\\
& \leq \sum_{\ell=1}^n \Surv_\ell(t).
\end{align*}
\end{proof}
We can now prove Corollary~\ref{cor:standard}.
\begin{proof}[Proof of Corollary~\ref{cor:standard}]
Using the triangle inequality,
$$
 \left\|\frac{1}{n}\sum_{\ell=1}^{n} M_{\ell}-\Sigma\right\|
 \leq  \left\|\frac{1}{n}\sum_{\ell=1}^{n} M_\ell - \frac{1}{n}\sum_{\ell=1}^{n} \psi_\tau(M_\ell) \right\| +  \left\|\frac{1}{n}\sum_{\ell=1}^{n} \psi_\tau(M_{\ell}) - \Sigma \right\|.
$$
The assumptions of Corollary~\ref{cor:standard} include: $\{M_1,\dots,M_n\}$ satisfy Assumptions~\ref{asm:weakdep} and~\ref{asm:tailbound}, and $\mathbb{E}[M_\ell]=\Sigma$, which enables us to use Corollary~\ref{cor:trunc:2} to upper bound the second term with probability $1-\exp(-t)$. This also allows for the use of Proposition~\ref{prop:trunc:estimator} to prove that the first term will be null with probability at least $1-\sum_{\ell=1}^n \Surv_\ell(\tau)$.
\end{proof}

\subsection{Infinite-Dimensional Case (Theorem \ref{thm:operator})}

\begin{proof}[Proof of Theorem \ref{thm:operator}]

For a sequence of $\mathbb{H}\otimes\mathbb{H}$-valued positive semi-definite symmetric random operators $\left\{M_{1},\ldots,M_{n}\right\}$ with $\Ep\left[M_{\ell}\right]=\Sigma$ and $\max_{1 \leq \ell \leq n}\left\|M_{\ell}\right\|\le \kappa^{2}$ almost surely for some $\kappa>0$ satisfying Assumption~\ref{asm:weakdep},
\begin{align*}
    &P\left(\sup_{\substack{u_{k}\in \mathbb{H}_{k}\\\left\|u_{k}\right\|=1}}\bigabs{\left\langle \left(\frac{1}{n}\sum_{\ell=1}^{n}M_{\ell}-\Sigma\right)u_{k},u_{k}\right\rangle }\ge {4}\sqrt{2}\left\|\Sigma\right\|\left({\kappa^{2}+\Gamma_{n}}\right)\sqrt{\frac{4\mathbf{r}\left(\Sigma\right)+t}{n}}\right)\\
    &\le \exp(-t),
\end{align*}
because for $\Sigma_{k}$ such that $\Sigma_{k}^{\left(j_{1},j_{2}\right)}:=\Ep[M_{\ell}^{\left(j_{1},j_{2}\right)}]$ and $\Sigma:=\Ep[M_{\ell}]$, $\left\|\Sigma_{k}\right\|\le \left\|\Sigma\right\|$ and $\mathrm{\rc Tr}\left(\Sigma_{k}\right)\le \mathrm{\rc Tr}\left(\Sigma\right)$, and $\Gamma_{n}$ is also uniform for each, as evident from the proof.
Note that for any $c\ge 0$ and $k\in\mathbb{N}$,
\begin{align*}
    &\left\{\sup_{\substack{u_{k}\in \mathbb{H}_{k}\\\left\|u_{k}\right\|=1}}\bigabs{\left\langle \left(\frac{1}{n}\sum_{\ell=1}^{n}M_{\ell}-\Sigma\right)u_{k},u_{k}\right\rangle }\ge c\right\} \\
    &\subset \left\{\sup_{\substack{u_{k+1}\in \mathbb{H}_{k+1}\\\left\|u_{k+1}\right\|=1}}\bigabs{\left\langle \left(\frac{1}{n}\sum_{\ell=1}^{n}M_{\ell}-\Sigma\right)u_{k+1},u_{k+1}\right\rangle } \geq c\right\},
\end{align*}
and 
\begin{align*}
    \lim_{k\to\infty}\left\{\sup_{\substack{u_{k}\in \mathbb{H}_{k}\\\left\|u_{k}\right\|=1}}\bigabs{\left\langle \left(\frac{1}{n}\sum_{\ell=1}^{n}M_{\ell}-\Sigma\right)u_{k},u_{k}\right\rangle }\ge c\right\}=\left\{\left\|\frac{1}{n}\sum_{\ell=1}^{n}M_{\ell}-\Sigma\right\|\ge c\right\}.
\end{align*}
The continuity of $P$ leads to
\begin{align*}
    P\left(\left\|\frac{1}{n}\sum_{\ell=1}^{n}M_{\ell}-\Sigma\right\|\ge {4}\sqrt{2}\left\|\Sigma\right\|\left({ \kappa^{2}+\Gamma_{n}}\right)\sqrt{\frac{4\mathbf{r}\left(\Sigma\right)+t}{n}}\right)\le \exp(-t).
\end{align*}
Then, using the same approach to extend Theorem \ref{thm:main} to Corollary \ref{cor:standard}, we obtain the statement.
\end{proof}

\section{Conclusion} \label{sec:conclusion}

We studied the deviations of the empirical mean of random matrices from its expected value in the dependent, heavy-tailed case. 
The upper bound derived here is independent of the dimension of the matrices but depends on the trace of the expectation and the tail of the distribution.
Additionally, the upper bound increases with the strength of the dependence between the matrices.
The proof here is based on a variational inequality and robustification by truncation. 
Our result is applied to the estimation problem of covariance operators/matrices, parameter estimation in linear hidden Markov models, and linear regression under overparameterization.

A limitation of our result is the tightness of the obtained upper bound. 
It is difficult to achieve lower bounds when random matrices are dependent and heavy-tailed, while some lower bounds are known when they are independent and each element is Gaussian.
Therefore, deriving lower bounds in this case is an interesting subject for future research.









\appendix

\section{Proof for Examples}
\label{subsec:proofs-for-example}

\begin{proof}[Proof of Proposition~\ref{proposition:lipschitz:dependence}]

Let $f:E\rightarrow E$ be a $1$-Lipschitz function and define $\mathcal{G}_\ell=\sigma(f(M_1),\dots,f(M_\ell))$. We aim to prove that, for any $g\in {\rm Lip}_{n-\ell}(\mathcal{S},1)$, we have
\begin{equation}
\abs{ \mathbb{E}[g(f(M_{\ell+1}),\dots,f(M_n)) \mid \mathcal{G}_\ell] - \mathbb{E}[g(f(M_{\ell+1},\dots,f(M_n))] } \leq \Gamma_{\ell,n}.
\label{equa:intermediaire:preuve:1}
\end{equation}
Let $h$ be defined by $h(a_1,\dots,a_\ell) = g(f(a_1),\dots,f(a_\ell))$. Then $h\in {\rm Lip}_{n-\ell}(\mathcal{S},1)$. Indeed,
\begin{align*}
 & \abs{    h(a_1,\dots,a_\ell) -  h(b_1,\dots,b_\ell)  } \\
 & = \abs{  g(f(a_1),\dots,f(a_\ell)) -  g(f(b_1),\dots,f(b_\ell)) }\\
 & \leq L \sum_{i=1}^\ell \|f(a_i) - f(b_i)\|_E \\
 & \leq L \sum_{i=1}^\ell \|a_i - b_i\|_E,
\end{align*}
where we used respectively the definition of $h$, the fact that $g\in {\rm Lip}_{n-\ell}(\mathcal{S},1)$ and the fact that $f$ is $1$-Lipschitz. Thus, because $(M_1,\dots,M_n)$ satisfies Assumption~\ref{asm:weakdep} and $h\in {\rm Lip}_{n-\ell}(\mathcal{S},1)$, then
    \begin{equation*}
\abs{ \mathbb{E}[h(M_{\ell+1},\dots,M_n) \mid \mathcal{F}_\ell] - \mathbb{E}[h(M_{\ell+1},\dots,M_n)] } \leq \Gamma_{\ell,n}
\end{equation*}
that we can rewrite as
    \begin{equation}
\abs{ \mathbb{E}[g(f(M_{\ell+1}),\dots,f(M_n)) \mid \mathcal{F}_\ell] - \mathbb{E}[g(f(M_{\ell+1},\dots,f(M_n))] } \leq \Gamma_{\ell,n}.
\label{equa:intermediaire:preuve:2}
\end{equation}
This is almost~\eqref{equa:intermediaire:preuve:1}; however, the conditional expectation does not hold with respect to the correct $\sigma$-algebra. This is easily fixed because $\mathcal{G}_\ell \subseteq \mathcal{F}_\ell$. Thus,
\begin{align*}
& \abs{ \mathbb{E}[g(f(M_{\ell+1}),\dots,f(M_n)) \mid \mathcal{G}_\ell] - \mathbb{E}[g(f(M_{\ell+1},\dots,f(M_n))] } \\
& =
\abs{ \mathbb{E}[ \mathbb{E}[g(f(M_{\ell+1}),\dots,f(M_n)) \mid \mathcal{F}_\ell] \mid \mathcal{G}_\ell] - \mathbb{E}[g(f(M_{\ell+1},\dots,f(M_n))] }
\\
&
\leq
 \mathbb{E}\left[  \abs{ \mathbb{E}[g(f(M_{\ell+1}),\dots,f(M_n))\mid \mathcal{F}_\ell]- \mathbb{E}[g(f(M_{\ell+1},\dots,f(M_n))] } \mid \mathcal{G}_\ell \right]
\\
&
\leq \Gamma_{\ell,n},
\end{align*}
by using~\eqref{equa:intermediaire:preuve:2}.
\end{proof}

\begin{proof}[Proof of Proposition~\ref{prop:assumptions}]
We define $(\bar{\xi}_\ell)_{\ell\in\mathbb{Z}}$ as an independent copy $\Xi$. 
We fix $\ell\in\{1,\dots,n\}$; we verify~\eqref{eq:asm:weakdep}. To do so, we define, for $m>\ell$,
$$ \bar{X}_m = C(\xi_m,\xi_{m-1},\dots,\xi_{\ell+1},\bar{\xi}_{\ell},\bar{\xi}_{\ell-1},\bar{\xi}_{\ell-2},\dots), $$
and $\bar{Y}_m = \bar{X}_m + \varepsilon_m $.
We put $\mathcal{G}_{\ell} = \sigma(\xi_{\ell},\xi_{\ell-1},\xi_{\ell-2},\dots; \varepsilon_\ell,\varepsilon_{\ell-1},\dots) $. Then, for $g\in{\rm Lip}_{n-\ell}(\mathcal{S},1)$, we have
\begin{align*}
 & \mathbb{E}[g(M_{\ell+1},\dots,M_n)\mid \mathcal{F}_\ell] - \mathbb{E}[g(M_{\ell+1},\dots,M_n)] 
 \\
 & = \mathbb{E}[\mathbb{E}[g(M_{\ell+1},\dots,M_n)\mid \mathcal{G}_\ell] - \mathbb{E}[g(M_{\ell+1},\dots,M_n)] \mid \mathcal{F}_\ell],
\end{align*}
and we prove an upper bound on $\mathbb{E}[g(M_{\ell+1},\dots,M_n) \mid \mathcal{G}_\ell] - \mathbb{E}[g(M_{\ell+1},\dots,M_n)]$. 
Hence, we have
\begin{align*}
 & \mathbb{E}[g(M_{\ell+1},\dots,M_n)\mid \mathcal{G}_\ell] - \mathbb{E}[g(M_{\ell+1},\dots,M_n)]
 \\
 & = \mathbb{E}[g(\bar{Y}_{\ell+1} \bar{Y}_{\ell+1}^\top ,\dots,\bar{Y}_n \bar{Y}_{n}^\top) - g(Y_{\ell+1} Y_{\ell+1}^\top ,\dots,Y_n Y_{n}^\top)\mid \mathcal{G}_\ell]
 \\
 & \leq \sum_{m=\ell+1}^n \left\| \mathbb{E}\left[ \bar{Y}_m \bar{Y}_{m}^\top - Y_m Y_{m}^\top   \mid \mathcal{G}_{\ell}\right]   \right\|
 \\
 & = \sum_{m=\ell+1}^n \left\| \mathbb{E}\left[ (\bar{X}_m + \varepsilon_m) (\bar{X}_{m} + \varepsilon_{m})^\top - (X_m + \varepsilon_m) ( X_{m} + \varepsilon_{m})^\top  \mid \mathcal{G}_{\ell}\right]   \right\|
 \\
 & =  \sum_{m=\ell+1}^n \left\| \mathbb{E}\left[ \bar{X}_m \bar{X}_{m}^\top - X_m X_{m}^\top  \mid \mathcal{G}_{\ell}\right]   \right\|
 \\
 & = \sum_{m=\ell+1}^n \left\| \mathbb{E}\left[ \bar{X}_m \bar{X}_{m}^\top -  \bar{X}_m X_{m}^\top   + \bar{X}_m X_{m}^\top   - X_m X_{m}^\top  \mid \mathcal{G}_{\ell}\right]   \right\|
 \\
 & \leq \sum_{m=\ell+1}^n  \biggl( \left\| \mathbb{E}\left[ \bar{X}_m \bar{X}_{m}^\top -  \bar{X}_m X_{m}^\top  \mid \mathcal{G}_{\ell}\right]   \right\| + \left\| \mathbb{E}\left[  \bar{X}_m X_{m}^\top   - X_m X_{m}^\top  \mid \mathcal{G}_{\ell}\right]   \right\| \biggr)
 \\
 & \leq \sum_{m=\ell+1}^n  B \biggl( \left\| \mathbb{E}\left[  \bar{X}_{m}^\top - X_{m}^\top  \mid \mathcal{G}_{\ell}\right]   \right\| + \left\| \mathbb{E}\left[  \bar{X}_m   - X_m   \mid \mathcal{G}_{\ell}\right]   \right\| \biggr).
\end{align*}
Then, we obtain
\begin{align*}
& \left\| \mathbb{E}\left[  \bar{X}_m   - X_m   \mid \mathcal{G}_{\ell}\right]  \right\|
\\
& = {\rc\left\| \mathbb{E}\left[ C(\xi_m,\dots,\xi_{\ell+1},\bar{\xi_\ell},\bar{\xi}_{\ell-1},\dots)- C(\xi_m,\dots,\xi_{\ell+1},\xi_\ell,\xi_{\ell-1},\dots)  \mid \mathcal{G}_{\ell}\right]  \right\|}
\\
& \leq \sum_{i=m-\ell}^\infty \alpha_i \mathbb{E}\left[  \|\bar{\xi}_{m-i} - \xi_{m-i} \|  \mid \mathcal{G}_{\ell}\right] 
 \leq 2 \sum_{i=m-\ell}^\infty \alpha_i B_\xi,
\end{align*}
and thus,
\begin{align*}
  \mathbb{E}[g(M_{\ell+1},\dots,M_n)\mid \mathcal{G}_\ell] - \mathbb{E}[g(M_{\ell+1},\dots,M_n)]
  & \leq \sum_{m=\ell+1}^n \left[  4 B \sum_{i=m-\ell}^\infty \alpha_i B_\xi \right]
  \\
  & \leq 4 B \sum_{i=\ell+1}^\infty \min(i,n) \alpha_i B_\xi.
\end{align*}
Thus,~\eqref{eq:asm:weakdep} is satisfied with $\Gamma_{\ell,n} =  4 B B_\xi \sum_{i=\ell+1}^\infty \min(i,n) \alpha_i $.
Let us now verify Assumption~\ref{asm:tailbound}. We have
\begin{align*}
\mathbb{P}(\|M_\ell\| \geq t )
&
= \mathbb{P}(\|(X_\ell+\varepsilon_{\ell})(X_{\ell}+\varepsilon_{\ell})^\top \| \geq t )
\\
& = \mathbb{P}(\|X_\ell+\varepsilon_{\ell} \|^2   \geq t )
\\
& = \mathbb{P}(\|X_\ell+\varepsilon_{\ell} \|  \geq \sqrt{t} ) 
\\
&
\leq  \mathbb{P}(\|X_\ell\| \geq \sqrt{t}/2 ) +  \mathbb{P}(\|\varepsilon_\ell\| \geq \sqrt{t}/2 )
\\
& \leq  \mathbf{1}_{\{t\leq 4B^2\}} +  \mathbb{P}(\|\varepsilon_\ell\| \geq \sqrt{t}/2 ),
\end{align*}
which ends the proof.
\end{proof}

\begin{proof}[Proof of Proposition \ref{prop:lag}]
Since $(X_\ell)_{\ell \in \mathbb{N}}$ is a CBS, {\rc $X_\ell = C(\xi_{\ell},\xi_{\ell-1},\xi_{\ell-2},\dots) $} with
$$ \| C(a_1,a_2,\dots) - C(b_1,b_2,\dots) \| \leq \sum_{\ell=1}^{\infty} \alpha_\ell \|a_\ell-b_\ell \| \text{ and } \mathcal{A} := \sum_{\ell=1}^\infty \alpha_\ell < \infty. $$
Using the form, we show that $ (\tilde{X}_\ell)_{\ell \in \mathbb{N}} := ((X_\ell, X_{\ell + 1})^\top )_{\ell \in \mathbb{N}}$ is also a CBS, since we have
$$ \tilde{X}_\ell = (C(\xi_{t},\xi_{t-1},\xi_{t-2},\dots),C(\xi_{t-1},\xi_{t-2},\xi_{t-3},\dots)) = D(\xi_t,\xi_{t-1},\xi_{t-2},\dots) $$
with some function $D$ which satisfies
$$ \| D(a_1,a_2,\dots) - D(b_1,b_2,\dots) \| \leq \sum_{\ell=1}^{\infty} (\alpha_\ell + \alpha_{\ell+1}) \|a_\ell-b_\ell \| .$$
Since $(\tilde{X}_\ell)_{\ell \in \mathbb{N}} $ is a CBS, $(M_1,\ldots,M_{\ell - 1})$ satisfies Assumption~\ref{asm:weakdep} with $\Gamma_{\ell,n} = 8 B B_\xi \sum_{i=\ell+1}^n \min(i,n) \alpha_i  $ and $\Gamma_n :=  8 B B_\xi \sum_{i=2}^n \min(i,n) \alpha_i  $ by Proposition~\ref{prop:assumptions}.

For Assumption~\ref{asm:tailbound}, we utilize the fact that the largest eigenvalue of a matrix is no more than a sum of the largest eigenvalues of its submatrices and obtain
\begin{align*}
&\mathbb{P}(\|M_\ell\| \geq t ) \\
&\leq \mathbb{P}(\|(X_\ell+\varepsilon_{\ell})(X_{\ell}+\varepsilon_{\ell})^\top \| \geq t /4) \\
& \quad + \Pr( \|(X_\ell+\varepsilon_{\ell})(X_{\ell+1}+\varepsilon_{\ell+1})^\top \| \geq t /2 ) \\
& \quad  +  \Pr (\|(X_{\ell + 1}+\varepsilon_{\ell + 1})(X_{\ell + 1}+\varepsilon_{\ell + 1})^\top \|\geq t /4 )
\\
& = 2 \mathbb{P}(\|X_\ell+\varepsilon_{\ell} \|^2   \geq t/4 ) +  \Pr( \|(X_\ell+\varepsilon_{\ell})(X_{\ell+1}+\varepsilon_{\ell+1})^\top \| \geq t /2 )
\\
& \leq  2 \mathbb{P}(\|X_\ell+\varepsilon_{\ell} \|  \geq \sqrt{t} / 2 ) + \Pr( \|X_\ell+\varepsilon_{\ell} \| \|X_{\ell+1}+\varepsilon_{\ell+1} \| \geq t /2 )
\\
& \leq  2 \mathbb{P}(\|X_\ell+\varepsilon_{\ell} \|  \geq \sqrt{t} / 2 ) + 2\Pr( \|X_\ell+\varepsilon_{\ell} \|  \geq \sqrt{t /2} )
\\
&
\leq 4 \mathbb{P}(\|X_\ell\| \geq \sqrt{t/2} ) + 4 \mathbb{P}(\|\varepsilon_\ell\| \geq \sqrt{t/2} )
\\
& \leq 4 \mathbf{1}_{\{t\leq 4B^2\}} + 4 \mathbb{P}(\|\varepsilon_\ell\| \geq \sqrt{t/2} ).
\end{align*}
Hence, Assumption \ref{asm:tailbound} holds $G(\tau)=4 \mathbf{1}_{\{t\leq 2B^2\}} + 4 \mathbb{P}(\|\varepsilon_\ell\| \geq \sqrt{t/2} )$.
Thus, Corollary \ref{cor:standard} shows the first statement.

Finally, the fact $\left\|\Sigma_{0:1}\right\| \leq \|\Sigma_1\| + \|\Sigma\| $ yields the second statement.
\end{proof}

\begin{proof}[Proof of Proposition \ref{prop:hmm}]
First, we confirm that $(X_\ell)_{\ell \in \mathbb{Z}}$ is a CBS in Example \ref{exm:cbs} by its definition.
Hence, by Proposition \ref{prop:cbs}, a sequence of matrices generated by $Y_\ell Y_\ell^\top$ satisfies Assumption \ref{asm:weakdep} and \ref{asm:tailbound}.

We show that the estimation error $\|\hat{A} - A\|$ is bounded by the estimation error of $\hat{\Sigma}$ and $\hat{\Sigma}_1$.
We bound the error as
\begin{align}
    \|\hat{A} - A\| &= \| (\hat{\Sigma}_{1} - \Sigma_{1})  (\hat{\Sigma} +  I)^{-1} + \Sigma_{Y,1} ((\hat{\Sigma} +  I)^{-1} - ({\Sigma} +  I)^{-1})\| \notag \\
    & \leq \|\hat{\Sigma}_{1} - \Sigma_{1}\| \| (\hat{\Sigma} +  I)^{-1}\| + \| \Sigma_{1}({\Sigma} +  I)^{-1} ( \Sigma -  \hat{\Sigma}) (\hat{\Sigma} +  I)^{-1} \| \notag  \\
    & \leq \|\hat{\Sigma}_{1} - \Sigma_{1}\| +  \|\Sigma -  \hat{\Sigma}\| \|\Sigma_{1} \|. \label{ineq:ahat}
\end{align}
Here, we use the facts $\|(\hat{\Sigma} +  I)^{-1}\| \leq 1$ and $\|({\Sigma} +  I)^{-1}\| \leq 1$.

We combine the above results.
Using the same discussion for Proposition \ref{prop:lag} in Section \ref{sec:lagged_covariance}, we have 
\begin{align*}
    &\max\{\|\hat{\Sigma}-\Sigma\|, \| \hat{\Sigma}_1 - \Sigma_1\|\}\\    
    &\le 4\sqrt{2}\left(\|\Sigma_1\| + \|\Sigma\|\right) \left(\tau+\Gamma_{n}\right)\sqrt{\frac{4 \mathbf{r}\left(\Sigma_{0:1}\right)  +t}{n}} +  G(\tau),
\end{align*}
where the definition of $\Sigma_{0:1}$ follows Section \ref{sec:lagged_covariance}.
We combine this inequality with the result \eqref{ineq:ahat}, and we obtain the statement.
\end{proof}

\begin{proof}[Proof of Proposition \ref{prop:over_param}]
By Lemma 7 in \citep{bartlett2020benign}, the risk $R(\hat{\theta})$ is evaluated as
\begin{align*}
    R(\hat{\theta}) &\leq 2 (\theta^*)^\top (I - \Pi_\mathsf{Y}) \Sigma (I - \Pi_\mathsf{Y})\theta^* + \sigma^2 \mathrm{\rc Tr}((\mathsf{Y} \mathsf{Y}^\top)^{-1} \mathsf{Y} \Sigma \mathsf{Y}^\top (\mathsf{Y} \mathsf{Y}^\top)^{-1}) \\
     & = 2 (\theta^*)^\top B \theta^* + c t \sigma^2 \mathrm{\rc Tr}(C),
\end{align*}
where $B= (I - \Pi_\mathsf{Y}) \Sigma (I - \Pi_\mathsf{Y})$.
We bound the first term as
\begin{align*}
     (\theta^*)^\top B \theta^* &=  (\theta^*)^\top (I - \Pi_\mathsf{Y}) \Sigma (I - \Pi_\mathsf{Y}) \theta^*  \\
     &=(\theta^*)^\top (I - \Pi_\mathsf{Y}) (\Sigma - n^{-1} \mathsf{Y}^\top \mathsf{Y}) (I - \Pi_\mathsf{Y}) \theta^*\\
     & \leq \|\theta^*\|^2 \| I - \Pi_\mathsf{Y}\| \|\Sigma - n^{-1} \mathsf{Y} ^\top \mathsf{Y}\| \\
     & \leq  \|\theta^*\|^2\|\Sigma - n^{-1} \mathsf{Y} ^\top \mathsf{Y}\|,
\end{align*}
where the second equality follows $(I - \Pi_\mathsf{Y})\mathsf{Y}^\top =  (I - \mathsf{Y}^\top (\mathsf{Y} \mathsf{Y}^\top)^{-1} \mathsf{Y}) \mathsf{Y}^\top = \mathsf{Y}^\top  - \mathsf{Y}^\top (\mathsf{Y} \mathsf{Y}^\top)^{-1} (\mathsf{Y} \mathsf{Y}^\top) = 0$, and the second inequality follows $\| I - \Pi_\mathsf{Y}\| \leq 1$ from the non-expansive property of projection operators.
Recalling that $n^{-1} \mathsf{Y}^\top \mathsf{Y} = \hat{\Sigma}$ as in \eqref{def:cov_operator}, Proposition \ref{prop:covariance} yields the statement.
\end{proof}

\putbib[main]
\end{bibunit}

\end{document}